\def\tr{\mathop{\text{tr}}\kern.2ex}
\def\P{{\mathbb P}}
\def\E{{\mathbb E}}
\def\V{{\mathbb V}}
\long\def\comment#1{}
\def\tr{\mathop{\text{Tr}}}
\newcommand{\bel}{\begin{eqnarray}\label}
\newcommand{\eel}{\end{eqnarray}}
\newcommand{\bes}{\begin{eqnarray*}}
\newcommand{\ees}{\end{eqnarray*}}
\newcommand{\normmm}[1]{{\left\vert\kern-0.25ex\left\vert\kern-0.25ex\left\vert #1 
    \right\vert\kern-0.25ex\right\vert\kern-0.25ex\right\vert}}
\def\##1\#{\begin{align}#1\end{align}}
\def\$#1\${\begin{align*}#1\end{align*}}
\title{ $\epsilon $-Strong Simulation of Fractional Brownian Motion and Related Stochastic Differential Equations}
\author[1]{Yi Chen}
\author[2]{Jing Dong}
\author[3]{Hao Ni}
\affil[1]{Northwestern University, yichen2016@u.northwestern.edu}
\affil[2]{Columbia University, jing.dong@gsb.columbia.edu}
\affil[3]{University College London, h.ni@ucl.ac.uk}
\date{}
\begin{document}
\maketitle
\begin{abstract}
Consider the fractional Brownian Motion (fBM) $B^H=\{B^H(t): t \in [0,1] \}$ with Hurst index $H\in (0,1)$. 
We construct a probability space supporting both $B^H$ and a fully simulatable process $\hat B_{\epsilon}^H $ such that 
\$
\sup_{t\in [0,1]}|B^H(t)-\hat B_{\epsilon}^H(t)| \le \epsilon
\$
with probability one for any user specified error parameter $\epsilon>0$. When $H>1/2$, we further enhance our error guarantee 
to the $\alpha$-H\"older norm for any $\alpha \in (1/2,H)$. This enables us to extend our algorithm to the 
simulation of fBM driven stochastic differential equations $Y=\{Y(t):t \in[0,1]\}$. 
Under mild regularity conditions on the drift and diffusion coefficients of $Y$, 
we construct a probability space supporting both $Y$ and a fully simulatable process $\hat Y_{\epsilon}$ such that 
\$
\sup_{t\in [0,1]}|Y(t)-\hat Y_{\epsilon}^H(t)| \le \epsilon
\$
with probability one. Our algorithms enjoy the tolerance-enforcement feature, i.e., the error bounds can be updated sequentially. 
Thus, the algorithms can be readily combined with other simulation techniques like multilevel Monte Carlo to estimate expectation of 
functionals of fBMs efficiently. 
\end{abstract}
\section{Introduction}
The fractional Brownian motion (fBM) of Hurst parameter $H\in(0,1)$, $\{B^H(t): t\ge 0\}$, is a centered real-valued Gaussian process with covariance function
\# \label{covfun}
r(s,t):=E[B^H(s)B^H(t)]=\frac{1}{2}\left(|s|^{2H}+|t^{2H}-|s-t|^{2H}\right).
\#
When $H=1/2$, fBM is a Brownian motion (BM) which has independent increments. 
When $H<1/2$, the increments of fBM are negatively correlated.
In contrast, when $H>1/2$, fBM has positively correlated increments and displays long-range dependence.
Generally, fBM can be viewed as extension of BM. As it allows the increment to be correlated, it has 
been applied in communications engineering \citep{norros1995use}, 
biology and physics \citep{hofling2013anomalous}. 
See also \cite{laskin2000fractional} and \cite{nualart2006fractional} for applications in finance and turbulence.
Due to the correlated increments (lack of Markovian structure), very few closed-form expressions are known for 
performance measures related to functionals of fBMs. 
In this context, simulation-based numerical method has been a powerful tool to conduct performance analysis
for fBM driven processes.

In this paper, we develop a new class of algorithms to construct paths of fBM and fBM driven stochastic differential equations (SDEs)
with strong error guarantees. 
In particular, the algorithm allows us construct a probability space supporting both a fBM 
and a fully simulatable paths $\hat B_{\epsilon}^H$, such that 
$$\sup_{0\leq t\leq 1} |B^H(t)-\hat B_{\epsilon}^H(t)| \leq \epsilon \mbox{ w.p. $1$.}$$
Moreover, when $H>1/2$, for $\alpha\in(1/2,H)$, we can further construct
a fully simulatable paths $\hat B_{\epsilon(\alpha)}^H$, such that 
$$\sup_{0<s\leq t\leq1}\frac{\bigl|B^H(t)-\hat B_{\epsilon(\alpha)}^H(t)-(B^H(s)-\hat B_{\epsilon(\alpha)}^H(s))\bigr|}{|t-s|^{\alpha}}\leq
\epsilon \mbox{ w.p. $1$.}$$
For $H>1/2$, the control of the $\alpha$-H\"older norm allows us to use the rough-path theory \cite{lyons1998differential}
to construct a probability space supporting both a fBM-driven SDE
$$dY(t)=\mu(Y(t))dt + \sigma(Y(t))dB^H(t)$$
and a sequence of fully simulatable path $\hat Y_{\epsilon}$, such that 
$$\sup_{0\leq t\leq 1} |Y(t)-\hat Y_{\epsilon}(t)| \leq \epsilon \mbox{ w.p. $1$.}$$

In addition to the strong error guarantee, the framework we developed also enjoys the {\em tolerance-enforcement} feature.
Specifically, for any sequence $0<\epsilon_n<\epsilon_{n-1} < \dots<\epsilon_1$, we can adaptively simulate
$X_{\epsilon_n}$ conditional on $X_{\epsilon_1}, \dots, X_{\epsilon_{n-1}}$.
The tolerance-enforcement allows us to easily combine our procedure with other simulation techniques such as Multilevel
Monte Carlo (MLMC) for efficient estimations of expectations \cite{giles2008multilevel}, 
and various randomization techniques to remove estimation bias (see for example \cite{rhee2012new,beskos2012varepsilon}).
In this paper, we provide a concrete demonstration as how to combine our algorithm with MLMC.
The strong error bound provides the extra benefit of simplifying the rate of convergence analysis and complexity analysis in this case. 

In terms of the computational complexity, our algorithm achieves the near optimal complexity.
Specifically, for fBM, to achieve an $\epsilon$ error bound, the computational cost is $O_p(\epsilon^{-1/(H-\delta)})$ for any $\delta>0$. 
When $H>1/2$, for fBM driven SDEs, under suitable regularity conditions on the drift and diffusion terms,
to achieve an $\epsilon$ error bound, the computation cost is $O_p(\epsilon^{-1/(2\alpha-1)})$ for any $\alpha\in(1/2,H)$. 
When combined with MLMC for expectation estimation, to achieve an $\epsilon^2$ mean squared error (MSE), 
for Lipschitz continuous functionals of fBM, we are able to reduce the computational complexity to the canonical $O_p(\epsilon^{-2}\log(1/\epsilon))$ when $H>1/2$.
For fBM driven SDEs, we are able to reduce the computational complexity to the canonical $O_p(\epsilon^{-2}\log(1/\epsilon))$ when $\alpha>3/4$.

The simulation framework we developed in this paper is an important extension of the framework developed in \cite{blanchet2017varepsilon} for BM driven SDEs,
and is the very first of its kind for fBM and fBM driven SDEs (with almost sure error bound and tolerance-enforcement feature). In the process of developing the simulation algorithms, we also extend existing results and prove new results
about properties of fBM. These results may be of independent interests to the analysis of fBM. 
In particular, for midpoint displacement decomposition (wavelet expansion using the Harr wavelets) of fBM,
we establish its convergence rate in the uniform norm for $H\in (0,1)$,
and in the $\alpha$-H\"older norm for $H\in (1/2,1)$, $\alpha<H$ almost surely.
These results rely on detailed analysis of the decay rate of the conditional variance of fBM at different dyadic levels. 
For fBM driven SDEs, we provide explicit characterization of the constant term for the error induced by Euler scheme. 
This extends previous results on the convergence rate of Euler scheme in path-by-path construction of fBM driven SDEs. 

Throughout the paper, we denote \$\|u\|_{\infty}:=\sup_{0\le t\le1 }|u(t)|\ \text{and}\ \|u\|_{\alpha}:=\sup_{0\le s< t\le1 } \frac{|u(s)-u(t)|}{|s-t|^{\alpha}}\$ as the supremum norm and $\alpha$-H\"older norm of a generic function $u$ on $[0,1]$ respectively.
For $\bv$, which can be a vector, a matrix, or a tensor, we use $\| \bv\| $ to denote the maximum of the absolute value of its entries.\\

The rest of the paper is organized as follows. We conclude this section with a brief review of the literature to put our work in the right context.
We introduce the basic idea of our algorithmic development in Section \ref{sec:basic}. We then introduce properties of 
the midpoint displacement decomposition of fBM in Section \ref{sec:mid}. This provides the theoretical basis for the construction of our algorithm. The details of the actual simulation algorithm for fBM are provided in Section \ref{sec:alg}. 
In Section \ref{sec:sde}, we extend the algorithm to SDEs driven by fBM with Hurst index $H>1/2$.
We explain how our algorithm can be combined with MLMC in Section \ref{sec:mlmc}.
Lastly, in Section \ref{sec:num} we conduct some numerical experiments as a sanity check of our development 
and provide some comments about implementations.
All of the proofs of the technical lemmas are delayed until the Appendix.


\subsection{Literature review}
Our work is closely related to the line of research on {\em simulation of fBM}.
Existing methods for simulating fBM can be divided into two categories, exact method and approximation method (see \cite{dieker2004simulation} for a detailed survey). 

The exact methods aim to generate the fBM at a fixed finite set of time points from the exact distribution. 
To carry out this task efficiently is highly nontrivial, due to the correlation structure of fBM. 
Naive implementation using the Cholesky decomposition has a complexity of $O(n^3)$ for $n$ points in general.
More efficient methods have been developed in the literature, mostly of them utilizing the stationarity of the process.
For example, the Hosking method (also known as Durbin or Levinson method) (e.g., \cite{levinson1946wiener},\cite{durbin1960fitting}) generates sample path recursively, which avoids calculating the inverse of the covariance matrix. Its computational complexity is  $O(n^2)$ for a set of size $n$. The circulant embedding method, which is originally proposed by \cite{davies1987tests} and later generalized by \cite{dietrich1997fast} and \cite{wood1994simulation}. The basic idea is to find the square root decomposition of the covariance matrix by embeding the covariance matrix in a so-called circulant covariance matrix. This method can further reduce the complexity to  $O(n\log(n))$. 
Our method aim to recover the whole fBM path, but it builds on being able to generate the fBM at finite set of points, dyadic points in particular, exactly.
Thus, some of these techniques developed in the literature will be incorporated into our algorithmic development. 
However, as our algorithm also relies on sequentially update the set of points at finer and finer scales, we lose some of the stationarity structure. 

The approximation methods aim to generate approximations of the fBM sample path. The (conditionalized) random midpoint displacement (RMD) method generates the sample path recursively in a carefully designed order \citep{lau1995self,norros1999simulation}. When generating the next sample, the (conditionalized) RMD method speeds up by using only partial samples generated, instead of whole history. It achieves a computational complexity of order $O(n)$ for $n$ points, 
but the path we constructed may lose certain properties of the original sample path (e.g. long-range dependence) and it is not clear what error guarantee we would be able to achieve here. 
Some approximation methods build on special representations of fBM. For example, \cite{mandelbrot1968fractional} represents the fBM as a stochastic integral with respect to ordinary BM and approximates the integral via Riemann sum. 
Other representations of the fBM that are used to develop approximation algorithms through truncation include wavelet decompositions (see for example \cite{abry1996wavelet}, \cite{meyer1999wavelets}, and \cite{ayache2003approximating}),
spectral decompositions (see for example \cite{dieker2003spectral}, \cite{paxson1997fast} and \cite{dzhaparidze2004series}).
Our method also builds a suitable infinite series decomposition of the fBM. However, instead of applying a deterministic truncation level,
our truncation level is adapted to the sample path, i.e. random.
Our work extends this line of literature by achieving a stronger error guarantee. 
We also note that like a lot of the approximation methods developed in the literature, 
our algorithm is for pre-specified fixed time horizons.

The simulation framework we developed is also closely related to recent development in {\em $\epsilon$-strong simulation}.
The $\epsilon$-strong simulation refers to constructing a fully simulatable path whose deviation from the true path is uniformly bounded by $\epsilon$ with probability one.
\cite{beskos2012varepsilon} is among the first to develop the concept $\epsilon$-strong simulation. 
In \cite{beskos2012varepsilon}, the authors develop an $\epsilon$-strong simulation algorithm for Brownian motion.  
\cite{blanchet2013steady} and \cite{blanchet2017varepsilon} later extend the framework to reflected Brownian motion and multidimensional stochastic differential equations respectively. One important application of $\epsilon$-strong simulation algorithm is to build unbiased estimators for expectations involving functionals of the sample path \citep{beskos2012varepsilon}, or to build exact simulation algorithm for the corresponding stochastic processes (at a finite collection of time points) \citep{beskos2005exact,chen2013localization}.
\cite{blanchet2017exact} extends the algorithm developed in \cite{blanchet2017varepsilon} to construct exact simulation algorithm for multidimensional SDEs. \cite{pollock2013exact} considers the SDEs with jumps and provides a comprehensive discussion on $\epsilon$-strong and exact simulation. See also \cite{glynn2016exact} for an extension review of recent development in exact simulation and unbiased estimation algorithms. Our work contributes to this line of work by extending the $\epsilon$-strong simulation framework to fBM and fBM driven SDEs.

In terms of the methodology. Our development builds on the idea of {\em record-breakers}.
This idea was first introduced in \cite{blanchet2011exact} for exact sampling of stochastic perpetuities.
Later similar ideas have been applied to exact simulation of queueing models in steady-state \citep{blanchet2015perfect, blanchet2013steady}, and max-stable processes and related random fields \citep{liu2016optimal}. Our algorithm also builds on idea of {\em Bernoulli factory} \citep{huber2016nearly},
but in the actual implementation, we avoid using the Bernoulli factory by 
applying some properly constructed change-of-measures.

An important tool in developing $\epsilon$-strong simulation algorithm for fBM driven SDEs is the {\em rough path theory} \citep{lyons1998differential}.
The rough path theory provides us with a path-by-path construction of the SDEs. By lifting a driving signal to a rough path, the mapping from a driving rough path to the solution to the SDE is uniformly continuous under the $p$-variation metric.
Most related to our setting, \cite{coutin2002stochastic} construct a geometric rough path associated with fBM where the Hurst index $H>1/4$ and develops a Skohorod integral representation of the geometric rough path. 
More recently, \cite{nualart2011construction} develop the construction of the rough path above fBM using Volterra's representation for any $H\in (0,1)$.

There are also works analyzing the discretization error for fBM driven SDEs.
For example, \cite{mishura2008stochastic} investigates the rate of convergence of the Euler scheme. 
\cite{neuenkirch2007exact} conduct convergence analysis of a few different discretization schemes.
\cite{deya2012milstein}  propose a modified Milstein scheme without using the actual L\'evy area. More recently, \cite{hu2016rate} introduce a modified Euler scheme that works well when $H$ approaches $1/2$. 
Most previous works focus on weaker error bound than what is established in our work.
Using rough path theory, we are able to study the discretization error in a path-by-path sense.
Similar techniques are used in \cite{davie2008differential, blanchet2017varepsilon}.
In this paper, we focus on the Euler scheme to demonstrate the basic idea of the $\epsilon$-strong framework. 
We view the exploration of more sophisticated 
discretization scheme as an interesting future research direction.

Our $\epsilon$-strong simulation algorithm can be combined with MLMC. MLMC is first proposed in \cite{giles2008multilevel} to reduce the computational complexity for the expectation estimation of SDEs (driven by BM) via Euler scheme. MLMC use the multigrids ideas and has $O(\epsilon^{-2}(\log(\epsilon))^2)$ computational complexity to achieve a MSE of $O(\epsilon^2)$, which is a significant improvement from the naive Monte Carlo method. This idea is further enhanced in \cite{giles2008improved} by combing with Milstein scheme. The idea of MLMC are also extended to the estimation of functionals of more general stochastic processes. For example, \cite{dereich2011multilevel} proposes a MLMC algorithm for L\'evy-driven SDEs and \cite{bayer2016rough} extends this idea for SDEs driven by general Gaussian noise using the rough path theory. 

\section{Basic idea} \label{sec:basic}
We start by introducing the basic idea of our algorithmic development.
Recall that fBM $B^H$ is a centered real-valued Gaussian process with the covariance function given in \eqref{covfun}. By Kolmogorov continuity theorem, fBM has a continuous modification. Moreover, for any $\alpha\in(0,H)$ and $T>0$, this modification
is $\alpha$-H\"older continuous on $[0,T]$. In this paper, we refer to the modification as the fBM, and 
focus on a finite time interval $[0,1]$ with $B^H(0)=0$. 

The algorithmic development consists of two steps. First, we identify an infinite series expansion of fBM, i.e. 
\begin{equation} \label{eq:series}
B^H(t)=\sum_{k=0}^{\infty}\Lambda_{k}(t)W_{k}
\end{equation}
where $\Lambda_{k}$'s are a sequence of basis functions, and $W_k$'s are the random coefficients.
We then develop an algorithm to truncate the infinite sum up to a finite but random level, $K$,
so that the error induced by the truncated terms is suitably controlled, e.g.
$$\sup_{0\leq t\leq 1}\biggl| \sum_{k=0}^{K}\Lambda_{k}(t)W_{k}-B^H(t)\biggr|\leq \epsilon \mbox{ w.p. 1}$$

In terms of the infinite series expansion for fBM, several of them are developed in the literature, which are
based on wavelet decomposition (multi-resolution framework) (see for example \cite{meyer1999wavelets}) or  
Karhunen-Lo\'eve type of expansion (spectral theory) (see for example \cite{dzhaparidze2004series}).
Our consideration here is twofold. First, the infinite series expansion needs to converge fast in an almost sure sense 
under uniform norm or even the $\alpha$-H\"older norm. Second, the corresponding simulation algorithm can be implemented efficiently. 

In this paper, our main development follows L\'evy's midpoint displacement technique, which corresponds to the wavelet decomposition using the Haar wavelets. The challenge here is that when $H\neq 1/2$, the coefficients are correlated. 
We shall provide more analysis about the random coefficient terms in Section \ref{sec:mid}.
The actual midpoint displacement construction goes as follows. Let $D_n$ be the dyadic discretization of order $n$ and $\Delta_n$ be the mesh of the discretization. Specifically, 
\$D_n=\{t^n_0,t^n_1,\cdots,t^n_{2^n}\},\ \text{where}\ t^n_i=i/2^n, i=0,1,\cdots,2^n,\ \text{and}\ \Delta_n=1/2^n. \$
We use $\bB^H_n=(B^H(t^n_0),\ldots,B^H(t^n_{2^n}))$ to denote the value of fBM at discretization level $n$. Given a realization of $\bB^H_n$, we can construct a continuous path $B^H_n$ over time interval $[0,1]$ via linear interpolation and we call $B^H_n$ a dyadic discretization of fBM of level $n$. We notice that $ B^H_n(t)-B^H_{n-1}(t) $ has zero-value on $D_{n-1}$. At the augmented points $D_{n}/D_{n-1}= \{t^{n}_{2i+1}\}_{i=0,\cdots,2^{n-1}-1},$ where $ {i=0,1,\cdots,2^n-1} $,   we have
\$ 
B^H_n\bigl(t^{n}_{2i+1}\bigr)-B^H_{n-1}\bigl(t^{n}_{2i+1}\bigr)=B^H(t^{n}_{2i+1})-\frac{1}{2}\bigl(B^H(t^{n-1}_i)+B^H(t^{n-1}_{i+1})\bigr) 
\$ 
This is what we refer to as the midpoint displacement. In Section \ref{sec:mid}, we show that the following infinite series representation is valid
\begin{equation}\label{eq:main}
B^H(t)=\sum_{k=0}^{\infty}\left[B^H_{k}(t)-B^H_{k-1}(t)\right],
\end{equation}
i.e. $B_n^H$ converges to $B^H$ almost surely {under the supremum norm} (Theorem \ref{dyadic_con}). Here $B^H_{-1}(t)$ denotes a zero-valued constant function. Note that when we truncate the infinite series at level $n$, we obtain $B^H_n$. We also establish the rate of convergence for $B_n^H$ (Theorem \ref{ucr} and Theorem \ref{hcr}). Specifically,   
$$\| B_n^H-B^H\|_{\infty}=O\left(2^{-(H-\delta)n}\right) \mbox{ for any $\delta\in(0,H)$}.$$ 
Notice that when the $W_k$'s in the series representation \ref{eq:series} are uncorrelated, \cite{kuhn2002optimal} shows that the optimal rate of convergence, under the $L_{2}$ norm, of such series representation of fBM is $O(n^{-H}(1+\log n)^{1/2})$.
The midpoint displacement representation achieves almost the same rate of convergence \footnote{The $n$-th dyadic level involves $2^n$ time points (Gaussian random variables).}.

We next introduce the algorithmic development to truncate the infinite sum. Our goal here is to control the error of the infinite truncated terms. To achieve this, we adopt the strategy of \textquotedblleft record-breakers \textquotedblright \citep{blanchet2017varepsilon}. The general idea of record-breakers is to define a sequence of events called record-breakers, which satisfies the
following two conditions\\

\begin{itemize}
\item[C1)] The following event happens with probability one: beyond some
random but finite level, there will be no more record breakers; 
\item[C2)] By knowing that there are no more record breakers, the contribution of the infinite remaining terms are well under control.\\
\end{itemize}

In our case, we say a record is broken at level $n$ if
$$\|B^H_{n}-B^H_{n-1}\|_{\infty}\ge \rho \cdot 2^{-(H-\delta)n}.$$
{Here $\delta \in (0,H) $ and $\rho>0$ are tuning parameters, which will be specified in Theorem \ref{tail} of Section \ref{sec:mid}. The choices of these parameters will affect the efficiency of the implementation of our algorithm.}
We also denote $N$ as the level of the last record-breaker, i.e.
$$N=\sup\left\{n\geq 1: \|B^H_{n}-B^H_{n-1}\|_{\infty}\ge \rho \cdot 2^{-(H-\delta)n}\right\}.$$
For C1), we show in Theorem \ref{ucr} that $N$ has a finite moment generating function.
For C2), we notice that for $n\geq N$, we have
$$\biggl \| \sum_{k=n+1}^{\infty}\left[B^H_{k}-B^H_{k-1}\right] \biggr\|_{\infty} \leq\rho \cdot \sum_{k=n+1}^{\infty}2^{-(H-\delta)k}.$$
Thus, once we know the time of the last record-breaker $N$, to achieve a certain accuracy $\epsilon$, we just need to find $N(\epsilon)>N$, such that
$$\rho\cdot \sum_{k=N(\epsilon)+1}^{\infty}2^{-(H-\delta)k}<\epsilon,$$
then $\| B^H_{N(\epsilon)}-B^H\|_{\infty}<\epsilon$. The error bound is achieved in a path-by-path sense. In addition, we show in Theorem \ref{hcr} that conditional on $N$, we also have an explicit upper
bound for the $\alpha$-H\"older norm of $B^H$ in a path-by-path sense. This is important to develop the $\epsilon$-strong simulation algorithm for fBM driven SDEs as outlined in Section \ref{sec:sde}.

The remaining task is to find the last record-breaker $N$. This is challenging as $N$ is not a stopping time for the filtration generated by the levels. We use techniques from rare-event simulation to overcome the challenge. Our strategy is to find the record-breakers sequentially until we find the last one. Let $\tau_k$ to denote the level of the $k$-th record break, i.e.
\$
\tau_0&=0, \\
\tau_k&=\inf \left\{n\ge \tau_{k-1}+1: \| B^H_{n}-B^H_{n-1}\|_{\infty}>\rho\cdot 2^{-(H-\delta)n}\right\}, \ k\ge1.
\$
Then we have 
\$
N=\sup\{k\ge 1: \tau_k<\infty \}.
\$
The general idea is as follows.
Conditional on $B^H_{\tau_k}$, 
we first use a change-of-measure to generate $B^H_{\tau_{k+1}}$. 
We then apply an acceptance-rejection step using a properly defined likelihood ratio. If the path is accepted, we find the $\tau_{k+1}$ and $B^H_{\tau_k}$,
otherwise, we claim that $\tau_k$ is the level of the last record-breaker, i.e. $N=\tau_k$.
The details of the algorithmic developments are provided in Section \ref{sec:alg}.

\section{Midpoint displacement of fBM} \label{sec:mid}
In this section, we analyze the midpoint displacement construction of fBM. This provides the theoretical foundation for our algorithmic development. Specifically, we establish the validity of the infinite series expansion \eqref{eq:main}, and analyze its rate of convergence under both the uniform norm and the $\alpha$-H\"older norm.
The analysis also provides us a way to construct the record-breakers. 

Recall that at the augmented points $D_{k}/D_{k-1}= \{t^{k}_{2j+1}\}_{j=0,1,\cdots,2^{k-1}-1} $, we have
\$ B^H_k\bigl(t^{k}_{2j+1}\bigr)-B^H_{k-1}\bigl(t^{k}_{2j+1}\bigr)=B^H(t^{k}_{2j+1})-\frac{1}{2}\bigl( B^H(t^{k-1}_j)+B^H(t^{k-1}_{j+1})\bigr). \$ 
For convenience, we denote by 
\# \label{ab} a^k_j:= B^H(t^{k}_{2j+1}),\quad b^k_j:=\frac{1}{2} \bigl( B^H(t^{k-1}_j)+B^H(t^{k-1}_{j+1})\bigr).\# 
Then since $ B^H_{k}(t)-B^H_{k-1}(t) $ is linear over intervals $[t^{k}_{2j},t^{k}_{2j+1} ]$ and $[t^{k}_{2j+1},t^{k}_{2j+2}]$, we have 
 \$
 \|B^H_k-B^H_{k-1} \|_{\infty} = \max_{0 \le j \le 2^{k-1}-1}| a^k_j-b^k_j  |.
 \$

We first establish the convergence rate of $\|B^H_k-B^H_{k-1} \|_{\infty}$,
which lays the foundation of subsequent results. 
We define
$$\ell_k:= 2^{-(H-\delta)k}, \mbox{ for any fixed $\delta\in (0,H)$.} $$
For any constant $\nu>0$, we denote by 
\# \label{start1}
K(\nu)=\sup\{n\ge1: 4\sqrt{n}>\nu\cdot 2^{\delta n}  \}.
\#
Then we have the following theorem establishing bounds for $\|B^H_k-B^H_{k-1} \|_{\infty}$. 
\begin{theorem} \label{tail}
{ For any constant $\nu, \nu^*>0$, for all $k> K(\nu)$, we have } 
\$
\PP\bigl(  \|B^H_k-B^H_{k-1} \|_{\infty}\ge \rho \ell_k \bigr)=\PP\Bigl(\max_{0\le j \le 2^{k-1}-1} |a^k_j-b^k_j | \ge \rho \ell_k\Bigr)  \le 2\exp \bigl\{-{\nu^*}^2 \cdot 2^{2k \delta-2}\bigr\},
\$
where $\rho=2(\nu+\nu^*)$.
\end{theorem}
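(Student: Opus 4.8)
The plan is to bound the tail probability of the maximum $\max_{0\le j\le 2^{k-1}-1}|a_j^k-b_j^k|$ by first controlling each individual term $|a_j^k-b_j^k|$ with a Gaussian tail bound, and then applying a union bound over the $2^{k-1}$ values of $j$. The key observation is that each $a_j^k-b_j^k$ is a centered Gaussian random variable, since it is a linear combination of the jointly Gaussian values $B^H(t^k_{2j+1})$, $B^H(t^{k-1}_j)$, and $B^H(t^{k-1}_{j+1})$. Thus the entire difficulty reduces to computing (or uniformly bounding) the variance $\sigma_k^2:=\V(a_j^k-b_j^k)$.

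First I would compute this variance explicitly using the covariance function $r(s,t)$ from \eqref{covfun}. Writing $a_j^k-b_j^k = B^H(t^k_{2j+1}) - \tfrac12 B^H(t^{k-1}_j) - \tfrac12 B^H(t^{k-1}_{j+1})$ and expanding $\V(a_j^k-b_j^k)$ as a quadratic form in the covariances, most of the cross terms should telescope or cancel because of the structure $|s-t|^{2H}$ evaluated at equally spaced dyadic points with spacing $\Delta_k = 2^{-k}$. I expect the variance to come out proportional to $\Delta_k^{2H} = 2^{-2Hk}$, i.e. $\sigma_k^2 = c_H \, 2^{-2Hk}$ for a constant $c_H$ depending only on $H$ (and crucially \emph{independent} of $j$, by the spatial homogeneity of the increment covariance). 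This uniformity in $j$ is what makes the union bound tractable.

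Given $\sigma_k = \sqrt{c_H}\,2^{-Hk}$, the standard Gaussian tail bound gives, for any threshold $u>0$,
\$
\PP\bigl(|a_j^k-b_j^k|\ge u\bigr)\le 2\exp\Bigl(-\frac{u^2}{2\sigma_k^2}\Bigr).
\$
Taking $u=\rho\,\ell_k = \rho\,2^{-(H-\delta)k}$, the ratio in the exponent becomes $u^2/(2\sigma_k^2) = \tfrac{\rho^2}{2c_H}\,2^{-2(H-\delta)k}/2^{-2Hk} = \tfrac{\rho^2}{2c_H}\,2^{2\delta k}$. Applying the union bound over $2^{k-1}$ terms yields
\$
\PP\Bigl(\max_j|a_j^k-b_j^k|\ge\rho\ell_k\Bigr)\le 2^{k-1}\cdot 2\exp\Bigl(-\tfrac{\rho^2}{2c_H}2^{2\delta k}\Bigr) = 2^{k}\exp\Bigl(-\tfrac{\rho^2}{2c_H}2^{2\delta k}\Bigr).
\$
The final step is to absorb the polynomial-in-$k$ factor $2^k = \exp(k\ln 2)$ into the doubly-exponential decay of the main term, which is exactly what the condition $k>K(\nu)$ is engineered to permit: since $4\sqrt{n}\le \nu\,2^{\delta n}$ for $n>K(\nu)$, the prefactor is dominated by a fraction of the exponent, leaving the stated bound $2\exp(-{\nu^*}^2 2^{2k\delta-2})$ after setting $\rho=2(\nu+\nu^*)$ and splitting the exponent into a piece that kills $2^k$ and a piece that survives.

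The main obstacle I anticipate is the variance computation: verifying that $\sigma_k^2 = c_H 2^{-2Hk}$ uniformly in $j$ requires carefully expanding the six covariance terms arising from $\V(a_j^k-b_j^k)$ and checking that the dependence on the base point $t^{k-1}_j$ cancels. Translation invariance of $|s-t|^{2H}$ in the \emph{difference} arguments suggests this should hold, but the midpoint term $b_j^k$ mixes two adjacent coarse-grid points, so one must confirm the cancellation rather than assume it. A secondary bookkeeping issue is the precise matching of constants: reconciling $\rho=2(\nu+\nu^*)$ and the factor $2^{2k\delta-2}$ in the exponent with my $\tfrac{\rho^2}{2c_H}2^{2\delta k}$ will pin down the exact value of $c_H$ the authors are using (plausibly they bound $c_H\le 1$ or incorporate it into the normalization), and the split of the exponent into the part absorbing $2^k$ via $K(\nu)$ and the surviving part governed by $\nu^*$.
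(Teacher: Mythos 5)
Your proposal is correct, but it takes a genuinely different route from the paper's proof. You work with the \emph{unconditional} law of $a^k_j-b^k_j$, and the obstacle you flagged does resolve the way you hoped: writing
\[
a^k_j-b^k_j=\tfrac{1}{2}\Bigl[\bigl(B^H(t^k_{2j+1})-B^H(t^{k-1}_j)\bigr)-\bigl(B^H(t^{k-1}_{j+1})-B^H(t^k_{2j+1})\bigr)\Bigr],
\]
a combination of two adjacent increments of equal length $\Delta_k$, stationarity of increments gives a variance equal to $(1-2^{2H-2})\,2^{-2kH}\le 2^{-2kH}$ uniformly in $j$ (the same constant $1-2^{2H-2}$ appears in the paper's proof of Lemma~\ref{var_bound2}), so your $c_H\le 1$ guess is right. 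Your constants then close: with $\rho=2(\nu+\nu^*)$ one has $\rho^2/2\ge 2\nu^2+2{\nu^*}^2$; for $k>K(\nu)$, squaring the defining inequality in \eqref{start1} gives $\nu^2 2^{2\delta k}\ge 16k\ge (k-1)\ln 2$, so the factor $\exp\{-2\nu^2 2^{2\delta k}\}$ absorbs the union-bound cardinality $2^{k-1}$ (which, note, is exponential rather than polynomial in $k$, though this is harmless against the doubly exponential main term), and the surviving factor $\exp\{-2{\nu^*}^2 2^{2\delta k}\}\le \exp\{-{\nu^*}^2 2^{2k\delta-2}\}$ yields exactly the stated bound with prefactor $2$. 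The paper argues differently: it splits $a^k_j-b^k_j=(a^k_j-c^k_j)+(c^k_j-b^k_j)$, where $c^k_j$ is the conditional mean given $\bB^H_{k-1}$, bounds each of the two maxima via the Borell--TIS inequality combined with an expected-maximum estimate (Lemmas~\ref{var_bound}, \ref{exp_bound}, \ref{var_bound2}), uses $K(\nu)$ to absorb the expected maximum $4\sqrt{k}\,2^{-kH}\le \nu\ell_k$ rather than a cardinality factor, and arrives at $\rho=2(\nu+\nu^*)$ because each of the two pieces is allotted a threshold $(\nu+\nu^*)\ell_k$. Incidentally, the paper's remark that $a^k_j-b^k_j$ ``is not a centered Gaussian random variable'' when $H\ne 1/2$ is accurate only conditionally on $\bB^H_{k-1}$; unconditionally your observation is correct, which is precisely what makes your shortcut legitimate. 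What the paper's heavier machinery buys is reuse downstream: the conditional mean $c^k_j$, the conditional variance bounds, and the bias/fluctuation split are exactly the objects needed later for the record-breaker algorithm (the BCE condition and the change of measure in Section~\ref{sec:ECM}), whereas your argument is more elementary and self-contained for this theorem alone.
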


Before we prove Theorem \ref{tail}, we would like to comment that $\nu$ and $\rho$ are parameters characterizing the record-breakers.
We have some freedom in choosing these parameters, and there is a tradeoff involved.
For larger $\rho$, the record-breakers are less likely to happen and it is relatively faster to find the last record-breaker. However, as a cost, we need to truncate at a higher level to achieve the desired accuracy. We provide more discussion about the choice of these parameters in practice in Section \ref{sec:num}.

\begin{proof}
Note that for fBM with Hurst index $H\ne 1/2$,  $a^k_j-b^k_j $ is not a centered Gaussian random variable. In the following, we use 
\$ 
c^k_j=\EE\bigl[ B^H\bigl(t^{k}_{2j+1}\bigr)\big |\bB^H_{k-1} \bigr],\ j=0,1,\cdots,2^{k-1}-1,
\$     
to denote the conditional expectation of fBM at the augmented points $D_{k}/D_{k-1} $ given the values of fBM on $D_{k-1}$.
Then we have 
\#  \label{decomp}
\max_{0\le j \le 2^{k-1}-1} |a^k_j-b^k_j | \le \max_{0\le j \le 2^{k-1}-1} |a^k_j-c^k_j |+ \max_{0\le j \le 2^{k-1}-1} |c^k_j-b^k_j |.
\#
The two terms in the right-hand side of inequality \eqref{decomp} correspond to the variance and bias. In what follows, we will establish bounds for each of them. 

It is easy to see that 
\begin{equation} \label{eq:decomp}
\PP\Bigl(\max_{0\le j \le 2^{k-1}-1} |a^k_j-b^k_j | \ge \rho\ell_k\Bigr)  \le \underbrace{\PP\Bigl(\max_{0\le j \le 2^{k-1}-1} |a^k_j-c^k_j | > \rho \ell_k/2\Bigr)}_{(V)}  + \underbrace{\PP\Bigl(\max_{0\le j \le 2^{k-1}-1} |c^k_j-b^k_j | > \rho \ell_k/2\Bigr)}_{(B)}.
\end{equation}

{ The rest of the proof is divided into two parts. We first establish a bound for (V), which corresponds to the variance. We then establish a bound for (B), which corresponds to the bias.
In subsequent analysis, we need several auxiliary results that are summarized in lemmas.}

For (V), we have 
\# \label{propdecomp}
\PP\Bigl(\max_{0\le j \le 2^{k-1}-1} |a^k_j-c^k_j | > \rho \ell_k/2\Bigr)=\E\Bigl[ \PP\Bigl(\max_{0\le j \le 2^{k-1}-1} |a^k_j-c^k_j | > \rho\ell_k/2\ |\ \bB^H_{k-1}\Bigr)\Bigr ].
\#  
In the following, we use $\PP_{k-1}(\cdot)$ to denote $\PP(\cdot|\bB^H_{k-1})$, which is the conditional probability given the values of $\bB^H_{k-1}$. We also use $ \EE_{k-1}  $ and $\VV_{k-1}  $ to denote corresponding conditional expectation and variance.  Then under the probability measure $\PP_{k-1}(\cdot)$,  $a^k_j$ is a Gaussian random variable with mean $  c^k_j$ and variance $\sigma^{2}_{kj}:=\VV_{k-1}(a^k_j) $. The following lemma upper bounds $\sigma^{2}_{kj}$ uniformly for all $j$'s.
\begin{lemma}\label{var_bound}
For all $k\ge1$ and $j=0,1,\cdots,2^{k-1}-1$, we have \$\VV_{k-1}(a^k_j)=\sigma^{2}_{kj} \le 2\cdot 2^{-2kH}.  \$  
\end{lemma}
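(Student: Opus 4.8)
The plan is to reduce the conditional variance to an \emph{unconditional} one and then compute it in closed form. The key observation is that $b^k_j=\tfrac12\bigl(B^H(t^{k-1}_j)+B^H(t^{k-1}_{j+1})\bigr)$ is a linear combination of the coordinates of $\bB^H_{k-1}$, hence $\sigma(\bB^H_{k-1})$-measurable. Subtracting a conditioning-measurable quantity does not change the conditional variance, so
\[ \sigma^2_{kj}=\VV_{k-1}(a^k_j)=\VV_{k-1}(a^k_j-b^k_j). \]
Since $(B^H(t))_t$ is Gaussian, the conditional variance of a jointly Gaussian variable never exceeds its marginal variance (the conditional variance is deterministic and equals a Schur complement that is dominated by the unconditional variance), so
\[ \sigma^2_{kj}=\VV_{k-1}(a^k_j-b^k_j)\le \VV\bigl(a^k_j-b^k_j\bigr). \]
It therefore suffices to bound the unconditional variance of the midpoint displacement $a^k_j-b^k_j$, in which all conditioning has disappeared.

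Next I would compute $\VV(a^k_j-b^k_j)$ directly using the stationarity of increments. Writing $h=2^{-k}$, the point $t^k_{2j+1}$ is exactly the midpoint of $[t^{k-1}_j,t^{k-1}_{j+1}]$, splitting it into two adjacent intervals of equal length $h$; with $U=B^H(t^k_{2j+1})-B^H(t^{k-1}_j)$ and $V=B^H(t^{k-1}_{j+1})-B^H(t^k_{2j+1})$ one has $a^k_j-b^k_j=\tfrac12(U-V)$. By stationary increments, $\VV(U)=\VV(V)=h^{2H}=2^{-2kH}$, while the covariance of two consecutive equal-length increments follows from \eqref{covfun}:
\[ \mathrm{Cov}(U,V)=\tfrac12\bigl((2h)^{2H}-2h^{2H}\bigr)=(2^{2H-1}-1)\,2^{-2kH}, \]
which also reproduces the familiar change of sign of increment correlations at $H=1/2$. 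Expanding,
\[ \VV(a^k_j-b^k_j)=\tfrac14\bigl(\VV(U)+\VV(V)-2\,\mathrm{Cov}(U,V)\bigr)=\frac{4-2^{2H}}{4}\,2^{-2kH}. \]
Because $H\in(0,1)$ gives $2^{2H}\in(1,4)$, the prefactor lies in $(0,\tfrac34)$, so in particular $\sigma^2_{kj}\le \VV(a^k_j-b^k_j)\le 2\cdot 2^{-2kH}$, uniformly in $j$ and with room to spare.

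I do not expect a genuine obstacle here: once one notices that $b^k_j$ is measurable with respect to the conditioning and that conditioning can only shrink Gaussian variance, the statement collapses to the one-line covariance computation above. The only points deserving care are (i) justifying $\VV_{k-1}(\cdot)\le\VV(\cdot)$ rigorously via joint Gaussianity, and (ii) bookkeeping the constant in $\mathrm{Cov}(U,V)$. I would also remark that the bound is far from tight — one obtains the factor $\tfrac{4-2^{2H}}{4}$ rather than $2$ — so the generous constant in the statement presumably just streamlines the downstream estimate \eqref{eq:decomp} feeding into Theorem \ref{tail}. A sharper but unnecessary alternative would condition only on the two neighbouring coarse values $B^H(t^{k-1}_j),B^H(t^{k-1}_{j+1})$ and invert the resulting $2\times 2$ Gaussian covariance.
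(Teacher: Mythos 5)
Your proof is correct, but it takes a genuinely different route from the paper's. The paper bounds $\VV_{k-1}(a^k_j)$ by $\VV\bigl(B^H(t^k_{2j+1}) \given B^H(t^{k-1}_j)\bigr)$ — i.e., it discards all conditioning information except the single left neighbour — and then evaluates that one-point conditional variance exactly via the orthogonal Gaussian-bridge decomposition of Sottinen--Yazigi, which produces a $j$-dependent expression that still requires a separate algebraic verification to get the uniform bound $2\cdot 2^{-2kH}$. You instead first re-center by the conditioning-measurable quantity $b^k_j$ (so that the variable whose variance you bound is the midpoint displacement itself) and then discard the conditioning \emph{entirely}, reducing everything to the unconditional variance of $\tfrac12(U-V)$ for two consecutive stationary increments; this is a two-line covariance computation giving the closed form $\tfrac{4-2^{2H}}{4}\,2^{-2kH}\le \tfrac34\, 2^{-2kH}$. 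Both arguments rest on the same Gaussian monotonicity principle (conditioning on more variables can only shrink the deterministic conditional variance), but yours applies it after the re-centering trick, which the paper does not need because keeping one conditioning point already pins down the local behaviour. Your version is more elementary (no bridge formula, no $j$-dependent algebra, uniformity in $j$ is automatic) and yields a strictly sharper constant; the paper's version has the side benefit that the bridge covariance formula it introduces is reused later (Lemma \ref{gb} and the recursive conditional-sampling construction in Section \ref{cc}), so its appearance here is consistent with the rest of the paper rather than wasted machinery. Either proof suffices for the downstream use in Theorem \ref{tail}.
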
 
Using Lemma \ref{var_bound} and Borell-TIS inequality, for any $u>0$, we obtain 
\#  \label{b_tis}
\PP_k\biggl(\max_{0\le j \le 2^{k-1}-1} |a^k_j-c^k_j |-\EE_k\bigl[\max_{0\le j \le 2^{k-1}-1} |a^k_j-c^k_j |\bigr] > u \biggr) \le \exp \bigl\{-{u}^2 \cdot 2^{-2kH-2}\bigr\}.\# 
In order to get rid of the expectation in inequality \eqref{b_tis}, we need the following lemma to upper bound the expectation. 
\begin{lemma} \label{exp_bound}  
Let $X_1,X_2,\cdots,X_n$ be a sequence of (not necessarily independent) centered Gaussian random variables whose variances are uniformly bounded by $\sigma^2$. Then we have
\$
\EE\bigl[\max_{1\le i \le n}|X_i|\bigr]\le 2\sqrt{2\log(2n)}\cdot \sigma.
\$
\end{lemma}
By applying Lemma \ref{var_bound} and \ref{exp_bound}, we obtain that for any $\nu>0$, there exists a $K(\nu)$ such that for all 
$k\ge K(\nu)$,    
\#  \label{cexp}
\EE_{k-1}\Bigl[\max_{0\le j \le 2^{k-1}-1} |a^k_j-c^k_j |\Bigr] &\le 4\sqrt{k}\cdot 2^{-kH} \le \nu \ell_k. 
\#     
For any constant $\nu^*>0$, by setting $u=\nu^*\ell_k$ in inequality \eqref{b_tis} and using inequality \eqref{cexp}, we obtain 
\$
\PP_{k-1}\Bigl(\max_{0\le j \le 2^{k-1}-1} |a^k_j-c^k_j | > (\nu+\nu^*)\ell_k\Bigr) \le \exp \bigl\{-{\nu^*}^2 \cdot 2^{2k \delta-2}\bigr\}.
\$
Then based on \eqref{propdecomp}, for the unconditional probability, we have 
\# \label{tail1}
\PP\Bigl(\max_{0\le j \le 2^{k-1}-1} |a^k_j-c^k_j | >(\nu+\nu^*) \ell_k\Bigr) \le \exp \bigl\{-{\nu^*}^2 \cdot 2^{2k \delta-2}\bigr\}.
\#

Now, we turn to (B). In contrast to the previous proof where we deal with the conditional probability $\PP_{k-1}(\cdot)$ first, in this part, we consider the unconditional probability $\PP(\cdot)$ directly. In the following, to simplify notations, let $\bb_{k}=(b^k_0,\cdots,b^k_{2^{k-1}-1}) $ and $\bc_k=(c^k_0,\cdots,c^k_{2^{k-1}-1}) $. Then by definition, we have
 \$
\bb_k=\bM_{k-1} \bB^H_{k-1},\  \text{where}\  \bM_{k-1}= \left[
 \begin{matrix}
   1/2 & 1/2 & 0 & \cdots & 0 \\
   0 & 1/2 & 1/2 & \cdots & 0 \\
   \cdots & \cdots & \cdots & \cdots & \cdots \\
   0 & 0 & 0 & \cdots & 1/2 \\
  \end{matrix}
  \right]_{2^{k-1} \times (2^{k-1}+1)}. 
\$ 
Based on the conditional distribution of multivariate Gaussian random vector and the covariance function of fBM, we have 
$\bc_k=\bN_{k-1} \bB^H_{k-1}$, where  $\bN_{k-1}=\bSigma_{12}^{(k-1)}\cdot [\bSigma_{22}^{(k-1)}]^{-1}.$ Here, $\bSigma_{22}^{(k-1)}  $ and  $\bSigma_{12}^{(k-1)}  $ take the form
\$
\bSigma_{22}^{(k-1)}=
\left[
 \begin{matrix}
   0 & 0 & 0  & \cdots & \cdots  \\
  0 & 2^{2H} & (2^{2H}+4^{2H}-2^{2H})/2  & \cdots & \cdots  \\
   0 & (2^{2H}+4^{2H}-2^{2H})/2 & 4^{2H}  & \cdots & \cdots  \\
   \cdots & \cdots & \cdots  & \cdots &       \cdots   \\
   \cdots & \cdots & \cdots  & \cdots &   \cdots  \\
  \end{matrix}
  \right]_{(2^{k-1}+1) \times (2^{k-1}+1)}
\$
which is a $(2^{k-1}+1)$  by $(2^{k-1}+1)$ matrix with $(i,j)$-th entry $ (|2i-2|^{2H}+ |2j-2|^{2H}-|2i-2j|^{2H})/2$;
\$
\bSigma_{12}^{(k-1)}=
\left[
 \begin{matrix}
   1^{2H} & (1^{2H}+2^{2H}-1^{2H})/2 & (1^{2H}+4^{2H}-3^{2H})/2  & \cdots & \cdots  \\
  3^{2H} & (3^{2H}+2^{2H}-1^{2H})/2 & (3^{2H}+4^{2H}-1^{2H})/2  & \cdots & \cdots \\
   5^{2H} & (5^{2H}+2^{2H}-3^{2H})/2 & (5^{2H}+4^{2H}-1^{2H})/2  & \cdots & \cdots \\
   \cdots & \cdots & \cdots  & \cdots &       \cdots   \\
   \cdots & \cdots & \cdots  & \cdots &   \cdots  \\
  \end{matrix}
  \right]_{2^{k-1}\times (2^{k-1}+1)}
\$
which is a $2^{k-1}$  by $(2^{k-1}+1)$ matrix with $(i,j)$-th entry $ (|2i-1|^{2H}+ |2j-2|^{2H}-|2i-2j+1|^{2H})/2$. We remark that here the inverse $[\cdot]^{-1}$ is interpreted as generalized inverse. 

Then we have $ \bc_k-\bb_k=(\bN_{k-1}-\bM_{k-1})\bB^H_{k-1}$ and its covariance matrix is given by
\$
 \bSigma^{(k)}=(\bN_{k-1}-\bM_{k-1}) \bSigma_{22}^{({k-1})}  (\bN_{k-1}-\bM_{k-1})^{\top} \cdot \Delta_{k}^{2H}.
 \$  The following lemma bounds the diagonal entries of $ \bSigma^{(k)} $, which correspond to the variances of random variables $c^k_j- b^k_j $.  
\begin{lemma} \label{var_bound2}
The diagonal entries of $\bSigma^{(k)}$ are uniformly upper bounded by $2\cdot 2^{-2kH}$.
\end{lemma}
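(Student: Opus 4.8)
The plan is to identify the $j$-th diagonal entry of $\bSigma^{(k)}$ as the variance $\VV(c^k_j-b^k_j)$ and then bound it by exploiting the fact that $c^k_j$ is an $L^2$-projection, thereby avoiding any direct manipulation of the generalized inverse $[\bSigma_{22}^{(k-1)}]^{-1}$. First I would record that, since $B^H$ is centered, both $\bc_k=\bN_{k-1}\bB^H_{k-1}$ and $\bb_k=\bM_{k-1}\bB^H_{k-1}$ are mean-zero Gaussian vectors, so the $j$-th diagonal entry of $\bSigma^{(k)}$ equals $\VV(c^k_j-b^k_j)=\EE[(c^k_j-b^k_j)^2]$.

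The key structural observation is that $c^k_j=\EE[a^k_j\mid\bB^H_{k-1}]$ is the orthogonal projection of $a^k_j$ onto the linear span of the coordinates of $\bB^H_{k-1}$, while $b^k_j$ lies in that span. Consequently $a^k_j-c^k_j$ is $L^2$-orthogonal to the span-valued quantity $c^k_j-b^k_j$, and writing $a^k_j-b^k_j=(a^k_j-c^k_j)+(c^k_j-b^k_j)$ gives the Pythagorean identity $\VV(a^k_j-b^k_j)=\VV(a^k_j-c^k_j)+\VV(c^k_j-b^k_j)$. In particular $\VV(c^k_j-b^k_j)\le\VV(a^k_j-b^k_j)$, which reduces the claim to bounding the variance of the raw midpoint displacement $a^k_j-b^k_j=B^H(t^k_{2j+1})-\tfrac12\bigl(B^H(t^{k-1}_j)+B^H(t^{k-1}_{j+1})\bigr)$.

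Finally I would compute $\VV(a^k_j-b^k_j)$ directly from the covariance function \eqref{covfun}. Setting $s=t^{k-1}_j$, $u=t^{k-1}_{j+1}$, and $m=t^k_{2j+1}=(s+u)/2$, and expanding $\EE\bigl[(B^H(m)-\tfrac12 B^H(s)-\tfrac12 B^H(u))^2\bigr]$ using $r(\cdot,\cdot)$, the absolute-level contributions $m^{2H},s^{2H},u^{2H}$ all cancel and only the increment terms survive, namely $(m-s)^{2H}=(u-m)^{2H}=\Delta_k^{2H}$ and $(u-s)^{2H}=(2\Delta_k)^{2H}$. This yields
\[
\VV(a^k_j-b^k_j)=\bigl(1-2^{2H-2}\bigr)\,2^{-2kH},
\]
independent of $j$. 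Since $H\in(0,1)$ gives $0<1-2^{2H-2}<3/4<2$, I conclude $\VV(c^k_j-b^k_j)\le 2\cdot 2^{-2kH}$ uniformly in $j$, as claimed (and in fact the sharper constant $3/4$ holds).

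The only genuine content lies in the exact variance computation of the last step, and even that is routine once the cancellation of the level terms is noticed. The real value of the approach is the orthogonality reduction: it delivers the bound without ever having to control the diagonal of the triple product $(\bN_{k-1}-\bM_{k-1})\bSigma_{22}^{(k-1)}(\bN_{k-1}-\bM_{k-1})^{\top}$ through an explicit handle on the generalized inverse, which would be considerably messier. I also note that this same computation simultaneously recovers Lemma \ref{var_bound}, since $\VV(a^k_j-c^k_j)\le\VV(a^k_j-b^k_j)$ by the same Pythagorean identity.
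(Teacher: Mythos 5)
Your proof is correct and is essentially the paper's own argument recast in probabilistic rather than matrix language: the paper's decomposition \eqref{matrix_dec} splits $(\bN_{k-1}-\bM_{k-1})\bSigma_{22}^{(k-1)}(\bN_{k-1}-\bM_{k-1})^{\top}$ into the covariance matrix of the raw displacements $a^k_j-b^k_j$ minus the conditional covariance of the midpoints given $\bB^H_{k-1}$, which is exactly your Pythagorean identity, with nonnegativity of the conditional-covariance diagonal playing the role of your orthogonality step. The paper then performs the same computation you do and arrives at the same per-entry constant $1-2^{2H-2}$ before relaxing it to the stated bound $2\cdot 2^{-2kH}$.
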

Then by using Borell-TIS inequality again, we have 
\$
\PP\biggl(\max_{0\le j \le 2^{k-1}-1} |c^k_j-b^k_j |-\EE\bigl[\max_{0\le j \le 2^{k-1}-1} |c^k_j-b^k_j |\bigr] > \nu^*\ell_k \biggr) \le \exp \bigl\{ -{\nu^*}^2 \cdot 2^{2k \delta-2} \bigr\}.
\$ 
Similar to the proof for (V), we can get rid of the expectation and obtain that for all $k>K(\nu)$,
\# \label{tail2}
\PP\Bigl(\max_{0\le j \le 2^{k-1}-1} |c^k_j-b^k_j | > (\nu+\nu^*)\ell_k\Bigr) \le \exp \bigl\{ -{\nu^*}^2 \cdot 2^{2k \delta-2} \bigr\}.
\#
Finally, combining \eqref{tail1} (for (V)) and \eqref{tail2} (for (B)), we obtain
\$
\PP\Bigl(\max_{0\le j \le 2^{k-1}-1} |a^k_j-b^k_j | \ge 2(\nu+\nu^*)\ell_k\Bigr)  \le 2\exp \bigl\{ -{\nu^*}^2 \cdot 2^{2k \delta-2} \bigr\},
\$ 
which concludes the proof of Theorem \ref{tail}. 
\end{proof}

\subsection{Validity of the expansion} \label{sec:valid}
Let $\cC([0,1])$ be the space of continuous functions over $[0,1]$ equipped with uniform norm $\| \cdot \|_{\infty}  $. 
The next theorem establish the validity of \eqref{eq:main}. 

\begin{theorem} \label{dyadic_con}
 The sample paths of $B^H_n(t) $ converge to a fBM $B^H(t) $ in $\cC([0,1])$ almost surely. In other words, 
 \$
 \PP\Bigl( \lim_{n \to \infty }\| B^H_n-B^H\|_{\infty}=0 \Bigr)=1.
 \$
\end{theorem}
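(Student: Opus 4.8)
The plan is to leverage the tail bound of Theorem \ref{tail} together with the Borel--Cantelli lemma to show that the telescoping increments $B^H_k-B^H_{k-1}$ are eventually dominated by the summable deterministic sequence $\rho\ell_k$, which forces $(B^H_n)$ to be an almost-sure Cauchy sequence in the Banach space $(\cC([0,1]),\|\cdot\|_\infty)$, and then to identify its limit with $B^H$.

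First I would fix $\delta\in(0,H)$ and any $\nu,\nu^*>0$ (for concreteness $\nu=\nu^*=1$), set $\rho=2(\nu+\nu^*)$, and note that $K(\nu)$ defined in \eqref{start1} is finite, since $4\sqrt n$ grows like $\sqrt n$ while $\nu\cdot 2^{\delta n}$ grows exponentially, so the defining supremum is over a bounded set. Writing $A_k:=\{\|B^H_k-B^H_{k-1}\|_\infty\ge\rho\ell_k\}$, Theorem \ref{tail} gives $\PP(A_k)\le 2\exp\{-{\nu^*}^2\, 2^{2k\delta-2}\}$ for all $k>K(\nu)$. Because the right-hand side decays doubly exponentially in $k$, the series $\sum_k\PP(A_k)$ converges, so by the first Borel--Cantelli lemma $\PP(\limsup_k A_k)=0$. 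Hence, almost surely, there is a finite (random) level $N$ such that $\|B^H_k-B^H_{k-1}\|_\infty<\rho\ell_k$ for every $k>N$.

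Next I would deduce the Cauchy property on this full-probability event. For $m>n\ge N$ the triangle inequality and a geometric bound give
$$\|B^H_m-B^H_n\|_\infty\le\sum_{k=n+1}^{m}\|B^H_k-B^H_{k-1}\|_\infty\le\rho\sum_{k=n+1}^{\infty}2^{-(H-\delta)k}=\frac{\rho\,2^{-(H-\delta)(n+1)}}{1-2^{-(H-\delta)}},$$
which tends to $0$ as $n\to\infty$ because $H-\delta>0$. Thus $(B^H_n)$ is almost surely Cauchy in the complete space $(\cC([0,1]),\|\cdot\|_\infty)$ and therefore converges uniformly to some continuous limit $\tilde B$.

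Finally I would identify $\tilde B$ with $B^H$. Every dyadic point $t\in\bigcup_n D_n$ belongs to $D_m$ for all large $m$, and since $B^H_m$ interpolates the exact values of $B^H$ on $D_m$ we have $B^H_m(t)=B^H(t)$ eventually, whence $\tilde B(t)=B^H(t)$. As the dyadic points are dense in $[0,1]$ and both $\tilde B$ and the continuous modification $B^H$ are continuous, the two functions coincide on all of $[0,1]$, which yields $\|B^H_n-B^H\|_\infty\to 0$ almost surely. The substantive work lies entirely in the doubly exponential tail of Theorem \ref{tail}; once that is in hand, the only mild points requiring care are the finiteness of $K(\nu)$ and the passage from pointwise agreement on a dense set to equality of the two continuous functions, both of which are routine.
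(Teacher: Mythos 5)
Your proposal is correct, and its first half (Borel--Cantelli applied to the tail bound of Theorem \ref{tail}, then the telescoping/geometric estimate giving the almost-sure Cauchy property in the complete space $\cC([0,1])$) coincides with the paper's argument, including the observation that $K(\nu)$ is finite so the bound applies for all but finitely many $k$. Where you genuinely diverge is in identifying the limit. You exploit the fact that $B^H_m$ is, by construction, the linear interpolation of the \emph{given} continuous modification $B^H$ at the dyadic points, so the limit agrees with $B^H$ on the dense set $\bigcup_n D_n$ and hence everywhere by continuity of both functions. The paper instead never invokes this interpolation property at the identification stage: it shows that the limit $X$ is a Gaussian process (as a strong limit of Gaussian vectors along approximating dyadic points) whose covariance function coincides with that of fBM by continuity of the covariance kernel, concluding that $X$ \emph{is a} fBM. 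Your route is shorter and yields the stronger pathwise statement that the limit equals the prescribed modification $B^H$, but it presupposes that a continuous fBM is already defined on the probability space; the paper's route is more constructive and self-contained, which matters for the simulation framework, since the algorithm only ever produces values at dyadic points and the theorem is then the certificate that the probability space so constructed indeed supports a fBM. Both arguments are valid, and indeed your dyadic-density observation is implicitly what lets the paper's limit $X$ be renamed $B^H$.
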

\begin{proof}
We first prove that the sequence $\{ B^H_n \}_{n\ge 1}$  is a Cauchy sequence in $\cC([0,1])$ almost surely. Then its limiting process exists almost surely due to the completeness of $\cC([0,1])$. Second, we show that the limiting process is a Gaussian process and has the same covariance structure as the fBM. 

Since the tail bound $2\exp \{ -{\nu^*}^2 \cdot 2^{2k \delta-2} \}$ established in Theorem \ref{tail} is summable, by Borel-Cantelli Lemma, we have 
\$
\PP\Bigl(\max_{0\le j \le 2^{k-1}-1} |a^k_j-b^k_j | \ge \rho\ell_k,\ \text{i.o.}\ \Bigr)=0.
\$ 
Hence, there exists a random variable $N$, which is finite almost surely, such that for all $k\ge N$, $\max_{0\le j \le 2^{k-1}-1} |a^k_j-b^k_j |\le \rho\ell_k  $. Then for arbitrary
 $\epsilon>0$, when $n,m$ large enough, we have 
\$
\|B^H_n-B^H_m\|_{\infty} &\le \sum_{k=n+1}^{m}  \|B^H_{k}-B^H_{k-1}\|_{\infty}\le \sum_{k=n+1}^{m}\max_{0\le j \le 2^{k-1}-1} |a^k_j-b^k_j | \le\sum_{k=n+1}^{m} \ell_k<\epsilon .
\$
Thus, by definition, $ \{B^H_n\}_{n\ge1} $ is a Cauchy sequence in $\cC([0,1])$ almost surely. Since $\cC([0,1])$ is complete, there exists a stochastic process $X(t) $ such that $\|B^H_n-X\|_{\infty} $  converge to $0$ almost surely. 

We next show that $\{X(t)\}_{t\ge 0}$ is indeed a fBM. Consider  an arbitrary finite collection of time points $ (t^*_1,\cdots,t^*_m)\in [0,1]$. For each $1\le  i \le m$, there exists a sequence of points $t^{n_i}_n\in D_n$ such that  $t^{n_i}_n\to t^*_i$. Note that 
\$
\bigl|X(t^*)-B_n^H(t^{n_i}_n)\bigr|&\le \bigl|X(t^*)-X(t^{n_i}_n)\bigr|+\bigl|X(t^{n_i}_n)-B_n^H(t^{n_i}_n)\bigr| \\
&\le \bigl|X(t^*)-X(t^{n_i}_n)\bigr|+\|X-B_n^H\|_{\infty},
\$
and $B^H_n(t^{i_n}_n)$ is Gaussian. It implies that $ (X(t^*_1),\cdots,X(t^*_m))$ is the strong limit of a sequence of Gaussian random vectors, and hence, is itself also Gaussian. Thus, $ X(t)$ is a Gaussian process. Moreover, by the construction of $ B_n^H(t)$, the covariance matrix of $ (B_n^H(t^{n_1}_n),\cdots,B_n^H(t^{n_m}_n)) $ is 
$\bSigma^H(t^{n_1}_n,\cdots,t^{n_m}_n)$, where $ \bSigma^H$ is the covariance matrix function of fBM. Since $ \bSigma^H$ is continuous, we have $\bSigma^H(t^{n_1}_n,\cdots,t^{n_m}_n) \to \bSigma^H(t^{*}_1,\cdots,t^{*}_m)$, which implies that $ X $ has same covariance matrix function as fBM. Thus $X$ is  a fBM.  
\end{proof}

Theorem \ref{dyadic_con} indicates that the representation
\#  \label{rep}
 B^H(t)-B^H_n(t)=\sum_{k=n+1}^{\infty} \bigl[ B^H_k(t)-B^H_{k-1}(t)\bigr]
\#  
 is well defined.

\subsection{Convergence analysis in uniform norm and $\alpha$-H\"older norm} \label{sec:rate}
In this section, we study the convergence rate of \eqref{eq:main}. We first investigate the rate of convergence in uniform norm, which provides the basis for the $\epsilon$-strong simulation of fBM. Then, for fBM with Hurst index $H>1/2$, we strengthen our result under $\alpha$-H\"older norm, which is necessary for the $\epsilon$-strong simulation of fBM driven SDEs.    

Recall our definition of the record-breaker and the last breaking time $N$. 
Based on our analysis in Section \ref{sec:valid},
we say that a record breaker happens at level $k$ if 
\# \label{rb}  
\max_{0\le j \le 2^{k-1}-1} |a^k_j-b^k_j | \ge \rho \ell_k=\rho2^{-(H-\delta)k},
\# 
and
$
N=\sup \{k \ge 1:  \max_{0\le j \le 2^{k-1}-1} |a^k_j-b^k_j | \ge \rho \ell_k \}.
$ 
The following theorem shows that conditions C1) and C2) are satisfied for our definition of the record-breaker.
\begin{theorem} \label{ucr}
{For any fixed $\delta\in(0,H)$ and $t>0$, $ \EE[\exp\{tN\}]<\infty$. When $n>N$, 
$$\| B^H-B_n^H\|_{\infty}\leq\frac{\rho \cdot 2^{-(H-\delta)(n+1)} }{(1-2^{-(H-\delta)})}.$$  
}
\end{theorem}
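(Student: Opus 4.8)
The plan is to prove the two assertions of Theorem~\ref{ucr} separately, using Theorem~\ref{tail} as the essential input. For the finite moment generating function of $N$, the strategy is to bound the tail $\PP(N \ge n)$ geometrically (indeed super-geometrically) and then verify that $\EE[\exp\{tN\}] = \sum_n e^{tn}\,\PP(N=n)$ converges for every $t>0$. First I would observe that by the definition of $N$ as the last level at which a record is broken,
\#
\PP(N \ge n) \le \sum_{k=n}^{\infty} \PP\Bigl(\max_{0\le j \le 2^{k-1}-1} |a^k_j-b^k_j | \ge \rho \ell_k\Bigr),
\#
since the event $\{N \ge n\}$ is contained in the event that at least one record is broken at some level $k \ge n$. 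Fixing any pair $\nu,\nu^*>0$ with $\rho = 2(\nu+\nu^*)$ as in Theorem~\ref{tail}, and restricting to $n > K(\nu)$ so that the theorem's bound applies to every term, each summand is at most $2\exp\{-{\nu^*}^2 2^{2k\delta-2}\}$. The tail of $N$ is therefore dominated by $2\sum_{k=n}^{\infty}\exp\{-{\nu^*}^2 2^{2k\delta-2}\}$, whose leading term decays like a double exponential in $n$.

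The main obstacle I anticipate is not conceptual but bookkeeping: confirming that a double-exponentially decaying tail really does yield $\EE[\exp\{tN\}]<\infty$ for \emph{all} $t>0$, not merely small $t$. This is where the unusually fast decay pays off. Since $\exp\{-{\nu^*}^2 2^{2k\delta-2}\}$ decays faster than any geometric sequence $e^{-ck}$, the dominant contribution to $\sum_{k=n}^\infty$ is its first term, so $\PP(N\ge n) \le C\exp\{-{\nu^*}^2 2^{2n\delta-2}\}$ for some constant $C$ absorbing the finitely many levels $k \le K(\nu)$. Then
\$
\EE[\exp\{tN\}] \le e^{tK(\nu)} + \sum_{n>K(\nu)} e^{tn}\,\PP(N\ge n) \le e^{tK(\nu)} + C\sum_{n>K(\nu)} \exp\{tn - {\nu^*}^2 2^{2n\delta-2}\},
\$
and the exponent $tn - {\nu^*}^2 2^{2n\delta-2} \to -\infty$ since the subtracted double-exponential term eventually dominates any linear term $tn$; hence the series converges for every fixed $t>0$. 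I would be careful to treat the levels $k \le K(\nu)$ separately, as Theorem~\ref{tail} only controls $k > K(\nu)$; these finitely many levels contribute at most a finite additive constant and cannot affect the existence of the moment generating function.

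For the second assertion, the deterministic path-by-path bound on $\|B^H - B^H_n\|_\infty$, the argument is purely algebraic once the event $\{n > N\}$ is invoked. On this event, by the definition of $N$, no record is broken beyond level $N$, so for every $k > N$ we have $\max_{0\le j \le 2^{k-1}-1}|a^k_j - b^k_j| < \rho\ell_k = \rho\,2^{-(H-\delta)k}$. Using the validity of the expansion from Theorem~\ref{dyadic_con} together with the representation \eqref{rep} and the identity $\|B^H_k - B^H_{k-1}\|_\infty = \max_{0\le j \le 2^{k-1}-1}|a^k_j - b^k_j|$ established at the start of this section, I would write
\$
\| B^H - B^H_n\|_\infty = \Bigl\| \sum_{k=n+1}^\infty \bigl[B^H_k - B^H_{k-1}\bigr]\Bigr\|_\infty \le \sum_{k=n+1}^\infty \|B^H_k - B^H_{k-1}\|_\infty \le \rho \sum_{k=n+1}^\infty 2^{-(H-\delta)k}.
\$
The final step is to sum the geometric series: with ratio $2^{-(H-\delta)}<1$ and first term $2^{-(H-\delta)(n+1)}$, the tail equals $2^{-(H-\delta)(n+1)}/(1 - 2^{-(H-\delta)})$, yielding exactly the claimed bound $\rho\, 2^{-(H-\delta)(n+1)}/(1 - 2^{-(H-\delta)})$. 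This portion involves no real difficulty; the only point requiring care is that the bound holds deterministically on the event $\{n>N\}$ rather than merely in expectation, which is guaranteed by the almost sure absence of record-breakers above level $N$.
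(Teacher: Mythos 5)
Your proposal is correct and follows essentially the same route as the paper: both parts rest on the union bound over levels $k\ge n$ combined with the double-exponential tail from Theorem~\ref{tail} (the paper phrases the moment generating function bound via the layer-cake integral $\int_0^\infty \PP(N\ge \log(u)/t)\,\ud u$ rather than your series $\sum_n e^{tn}\PP(N\ge n)$, but this is cosmetic), and the second assertion is the identical geometric-series computation from the representation \eqref{rep}. If anything, your treatment is slightly more careful than the paper's in explicitly separating the finitely many levels $k\le K(\nu)$ where Theorem~\ref{tail} does not apply.
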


\begin{proof}
The moment generating function of $N$ can be written as
\$
 \EE[\exp\{tN\}] =\EE\Bigl[ \int_0^{\infty} 1\bigl\{ \exp\{tN\}\ge u\bigr\} \ud u\Bigr]=\int_0^{\infty}  \PP(N\ge \log(u)/t) \ud u. 
\$
We have 
\$
\PP(N\ge \log(u)/t) &\le \sum_{k=[\log(u)/t]}^{\infty} \PP( \text{record broken at level $n$})\\
&\le \sum_{k=[\log(u)/t]}^{\infty}  2\exp \bigl\{-{\nu^*}^2 \cdot 2^{2k \delta-2}\bigr\} \mbox{ (by Theorem \ref{tail})} \\
&\le C \exp\bigl \{-{\nu^*}^2\cdot u^{2\delta/t}  \bigr\}, 
\$ 
where $C$ is a constant sufficiently large. Since $ \exp\{-{\nu^*}^2\cdot u^{2\delta/t}  \}$ is integrable, $\EE[\exp\{tN\}]$ is finite. 

Now for $n>N$,  according to the representation \eqref{rep}, we have     
\$
\bigl \|B^H-B_n^H\bigr \|_{\infty}  \le \rho \cdot \sum_{k=n+1}^{\infty} 2^{-(H-\delta)k}= \frac{\rho \cdot 2^{-(H-\delta)(n+1)} }{1-2^{-(H-\delta)}}.
\$
\end{proof}

For $0<\alpha<1$, the $\alpha$-H\"older norm of a function $f(\cdot)$ over interval $[0,1]$ is defined as 
\$
\|f  \|_{\alpha}=\sup_{0\le s<t \le1}\frac{|f(s)-f(t)|}{|s-t|^{\alpha}}.
\$
For the $\alpha$-H\"older norm, we only consider fBM $ B^H $ with $H>1/2$. Then for all $\alpha \in (1/2,H)$, the sample paths of $B^H $ are $\alpha$-H\"older continuous almost surely. By the representation \eqref{rep}, 
we have the following upper bound for the $\alpha$-H\"older norm of discretization error 
\$ 
\bigl\|B^H_n-B^H\bigr\|_{\alpha}\le\sum_{k=n+1}^{\infty}  \bigl\|B^H_k-B^H_{k-1}\bigr\|_{\alpha}. 
\$  
For each discretization level $k$, the following lemma gives a computable bound of $\|B^H_k(t)-B^H_{k-1}(t)\|_{\alpha}$. 
\begin{lemma} \label{holer_bound}
For all $k\ge1$, we have 
\$
 \bigl\|B^H_k-B^H_{k-1}\bigr\|_{\alpha}\le2^{\alpha(k-1)+2} \cdot \max_{0\le j \le 2^{k-1}-1} |a^k_j-b^k_j|.
\$  
\end{lemma}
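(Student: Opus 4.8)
The plan is to exploit that the difference $g := B^H_k - B^H_{k-1}$ is a continuous, piecewise-linear function whose breakpoints all lie in $D_k$. First I would record its elementary geometry: $g$ vanishes on every node of $D_{k-1}$, at each augmented point $t^k_{2j+1}$ it takes the value $d_j := a^k_j - b^k_j$, and it interpolates linearly in between. Hence on each interval $[t^{k-1}_j, t^{k-1}_{j+1}]$ the graph of $g$ is a ``tent'' of height $d_j$ over a base of width $\Delta_{k-1} = 2^{-(k-1)}$. Two scalar quantities then control the whole Hölder norm: the supremum norm $\|g\|_{\infty} = \max_{0\le j \le 2^{k-1}-1} |d_j| =: M$ (since linear interpolation of the endpoint values $0$ and $d_j$ never exceeds $|d_j|$ in magnitude), and the Lipschitz constant of $g$, which equals the steepest tent slope, namely $|d_j|/(\Delta_{k-1}/2) = |d_j|\cdot 2^{k} \le M\cdot 2^{k}$.

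Next I would invoke the standard interpolation between the sup-norm bound and the Lipschitz bound, splitting on the mesh. For $0 \le s < t \le 1$ write $h = t-s$. If $h \ge \Delta_{k-1} = 2^{-(k-1)}$, I bound the numerator by $2\|g\|_{\infty} = 2M$ and use $h^{-\alpha} \le 2^{\alpha(k-1)}$, which gives $|g(s)-g(t)|/h^{\alpha} \le 2M\cdot 2^{\alpha(k-1)}$. If instead $h < 2^{-(k-1)}$, I factor $|g(s)-g(t)|/h^{\alpha} = \bigl(|g(s)-g(t)|/h\bigr)\,h^{1-\alpha} \le M\cdot 2^{k}\cdot (2^{-(k-1)})^{1-\alpha} = 2^{\alpha(k-1)+1}M$. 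Taking the supremum over all $s,t$ and the maximum over the two regimes yields $\|g\|_{\alpha} \le 2^{\alpha(k-1)+1}M \le 2^{\alpha(k-1)+2}M$, i.e. the claimed bound with a factor of $2$ to spare.

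There is essentially no deep obstacle here; the lemma is a soft consequence of the piecewise-linear structure, and the only genuine work is bookkeeping. The two points demanding care are (i) verifying $\|g\|_{\infty} = M$ exactly, and (ii) tracking the mesh exponents correctly, since the tent base is $\Delta_{k-1} = 2^{-(k-1)}$ while each individual linear piece has length $\Delta_k = 2^{-k}$; it is precisely the base width $\Delta_{k-1}$ (and the resulting slope $|d_j|\cdot 2^{k}$) that produces the factor $2^{\alpha(k-1)}$ rather than $2^{\alpha k}$. Because the stated constant $2^{\alpha(k-1)+2}$ is deliberately loose, I would not attempt to optimize it; the point of the lemma is only that $\|B^H_k - B^H_{k-1}\|_{\alpha}$ is controlled by the same midpoint displacement $\max_{0\le j \le 2^{k-1}-1}|a^k_j - b^k_j|$ already bounded in Theorem \ref{tail}, up to a geometric factor $2^{\alpha k}$, which is summable against $\ell_k$ when $\alpha < H$.
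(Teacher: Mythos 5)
Your proof is correct and follows essentially the same route as the paper's: both split on whether $|s-t|$ exceeds the coarse mesh $2^{-(k-1)}$, using the maximal tent slope $2^{k}\max_j|a^k_j-b^k_j|$ in the small-gap regime and the supremum bound $\max_j|a^k_j-b^k_j|$ in the large-gap regime. Your bookkeeping is in fact slightly cleaner (the paper carries a redundant factor of $2$ in its Lipschitz step), yielding the sharper constant $2^{\alpha(k-1)+1}$, which of course implies the stated bound.
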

The following theorem establishes convergence rate of \eqref{eq:main}  in the $\alpha$-H\"older norm. 
\begin{theorem} \label{hcr}
{ For any fixed $\alpha \in (1/2,H)$ and  $\delta\in(0,H-\alpha)$, when $n>N$}
$$\|B^H-B_n^H\|_{\alpha} \leq \frac{\rho2^{2-\alpha}\cdot 2^{-(H-\alpha-\delta)(n+1)}}{1-2^{-(H-\alpha-\delta)}}.$$
\end{theorem}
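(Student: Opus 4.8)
The plan is to bound the $\alpha$-H\"older norm of the tail $B^H - B^H_n$ term by term through the series representation \eqref{rep}, mirroring the uniform-norm argument of Theorem \ref{ucr} but replacing the trivial sup bound on each increment with the level-wise H\"older estimate supplied by Lemma \ref{holer_bound}. First I would apply the triangle inequality for $\|\cdot\|_\alpha$ to the representation \eqref{rep}, which gives
\$
\|B^H - B^H_n\|_\alpha \le \sum_{k=n+1}^{\infty} \|B^H_k - B^H_{k-1}\|_\alpha,
\$
and then invoke Lemma \ref{holer_bound} on each summand to obtain
\$
\|B^H_k - B^H_{k-1}\|_\alpha \le 2^{\alpha(k-1)+2}\max_{0\le j \le 2^{k-1}-1}|a^k_j - b^k_j|.
\$

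Next I would exploit the hypothesis $n>N$. Since $N$ is the level of the last record-breaker, every index $k \ge n+1$ satisfies $k > N$, so by the definition \eqref{rb} no record breaks at level $k$ and hence $\max_{j}|a^k_j - b^k_j| < \rho \ell_k = \rho\, 2^{-(H-\delta)k}$. Substituting this into the previous estimate and collecting the powers of two yields
\$
\|B^H_k - B^H_{k-1}\|_\alpha \le \rho\, 2^{2-\alpha}\, 2^{-(H-\alpha-\delta)k},
\$
which turns the tail into a single geometric series in $k$.

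The final step is to sum this geometric series. Because $\delta \in (0, H-\alpha)$ we have $H-\alpha-\delta > 0$, so the common ratio $2^{-(H-\alpha-\delta)}$ lies strictly in $(0,1)$, the series converges, and summing from $k=n+1$ produces the factor $2^{-(H-\alpha-\delta)(n+1)}/\bigl(1-2^{-(H-\alpha-\delta)}\bigr)$; multiplying by the prefactor $\rho\, 2^{2-\alpha}$ gives exactly the claimed bound. I do not anticipate any real obstacle, since once Lemma \ref{holer_bound} is in hand the argument is a routine manipulation. The only point needing care is the bookkeeping of exponents: one must verify that the amplification factor $2^{\alpha(k-1)}$ from the H\"older estimate combines with the decay $2^{-(H-\delta)k}$ from the record-breaker bound to leave genuine geometric decay. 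This is precisely where the stronger restriction $\delta < H-\alpha$ enters, in contrast to the weaker $\delta < H$ that sufficed for the uniform-norm result of Theorem \ref{ucr}.
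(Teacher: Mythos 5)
Your proposal is correct and follows essentially the same route as the paper's proof: expand $B^H-B^H_n$ via the representation \eqref{rep}, apply the triangle inequality for $\|\cdot\|_{\alpha}$, bound each term $\|B^H_k-B^H_{k-1}\|_{\alpha}$ by Lemma \ref{holer_bound} combined with the no-record-breaker bound $\max_j|a^k_j-b^k_j|\le\rho\,2^{-(H-\delta)k}$ for $k>N$, and sum the resulting geometric series using $H-\alpha-\delta>0$. Your write-up is in fact slightly more explicit than the paper's (which leaves the final summation implicit), but there is no substantive difference in approach.
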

\begin{proof}
By the definition of $N$, for all $k>N$, we have  $\max_{0\le j \le 2^{k-1}-1} |a^k_j-b^k_j| \le \rho 2^{-(H-\delta)k} $. Then according to Lemma \ref{holer_bound}, we have 
\$
 \bigl\|B^H_k-B^H_{k-1}\bigr\|_{\alpha}\le2^{\alpha(k-1)+2}  \cdot \rho 2^{-(H-\delta)k}=\rho2^{2-\alpha}\cdot 2^{-(H-\alpha-\delta)k}.
\$
\end{proof}
   
As a result of Theorem \ref{hcr}, once we find $N$, we also have an upper bound for the $\alpha$-H\"older norm of the fBM sample path. Specifically,
\$\|B^H\|_{\alpha} \le \|B_N^H\|_{\alpha}+\frac{\rho2^{2-\alpha}\cdot 2^{-(H-\alpha-\delta)(N+1)}}{1-2^{-(H-\alpha-\delta)}}. \$

\section{Simulation Algorithm} \label{sec:alg}
In this section, we introduce our $\epsilon$-strong simulation algorithm in details. Based on the theoretical foundation built in the previous section, our simulation algorithm includes two main steps.  First, we simulate the fBM 
up to level $N$, where $N$ is the level of the last record-breaker.
Notice that once we find $N$, the truncation error at level $n>N$, is controlled by
$$ \rho \cdot \sum_{k=n+1}^{\infty}2^{-(H-\delta)k}= \frac{\rho \cdot 2^{-(H-\delta)(n+1)} }{1-2^{-(H-\delta)}}.$$
Second, we find the truncation level $N(\epsilon)$ such that
$(1-2^{-(H-\delta)})^{-1} \cdot \rho \cdot 2^{-(H-\delta)N(\epsilon)}\leq\epsilon.$
In this step, if $N(\epsilon) \leq N$, we have already obtained an  $\epsilon$-strong approximated sample path of fBM by simulating the path up to level $N$. 
Otherwise, we need to refine the path from level $N$ to level $N(\epsilon)$. 
We summarize our main simulation algorithm in Algorithm \ref{alg:main}. 
\begin{algorithm} 
\caption{$\epsilon$-Strong Simulation of fBM (SFBM)} \label{alg:main} 
\begin{algorithmic}[1]
\STATE \textbf{Input:} Hurst index $H$, simulation accuracy $\epsilon$, record-breaker parameter $\rho,\delta$.
\STATE \textbf{Find the last record-breaker:} 
\STATE \qquad Call Algorithm \ref{SLRB} (SLRB): set $[N,\text{SP}]\leftarrow $ SLRB($H,\rho,\delta$).
\STATE \textbf{Find the truncation level:} set $N(\epsilon) \leftarrow \max\bigl\{N,  \lceil \log_2( \rho\cdot (\epsilon (1-2^{-(H-\delta)}))^{-1} )/ (H-\delta)   \rceil    \bigr\}$.
 \STATE \textbf{If $N(\epsilon)>N$:}
\STATE  \qquad \textbf{Simulate until the level $N(\epsilon)$  using acceptance-rejection method:} 
\STATE  \qquad \qquad \textbf{Repeat:} sample fBM at $D_{N(\epsilon)}/D_{N}$, under the nominal measure conditional on the value of the fBM at $D_{N}$.
\STATE  \qquad \qquad \textbf{Until:} no record-breakers happen at levels $N+1,\cdots,N(\epsilon)$. 
\STATE  \qquad \qquad  Set SP $\leftarrow$ Union(SP, valus of fBM at $D_{N(\epsilon)}/D_{N}$).
\STATE \textbf{Output:} $B^H_{N(\epsilon)}(t)$, the piecewise linear interpolation of SP.
\end{algorithmic}
\end{algorithm}
The details of the first step (finding the last record-breaker) is further outlined in Algorithm \ref{SLRB}.
The second step (refining the dyadic approximation up to the truncation level) involves simple acceptance-rejection method which is already detailed in Algorithm \ref{alg:main}. 
We note that sampling the fBM at $D_{N(\epsilon)}$ given its values at $D_N$ can be implemented straightforwardly by Cholesky decomposition given the conditional mean and covariance matrix. 
However, this implementation has high computational cost.
We will provide a more efficient recursive implementation in Section \ref{cc}.

In Algorithm \ref{SLRB}, we find the record-breakers sequentially until the last one. 
Finding the next record-breaker is a challenging task, as the next record-breaker may never happen.
We overcome the difficulty here by using techniques from rare-event simulation. 
Intuitively, breaking the record at level $n$ is a rare event for large values of $n$.
The details of how to find the next record-breaker are summarized in Algorithm \ref{SNRB}.

\begin{algorithm}
\caption{Simulation of the Last Record-Breaker (SLRB)} \label{SLRB}
\begin{algorithmic}[1]
\STATE \textbf{Input:} Hurst index $H$, record-breaker parameter $ \rho,\delta$.
\STATE \textbf{Determine the starting level:}  
\STATE  \qquad   \qquad  \quad Calculate $N=N^*(\rho,\delta)$. Sample $\bB^H_{N}$ and store it in array SP.
\STATE  \qquad   \qquad  \quad  Set $I\leftarrow 1$.
\STATE \textbf{While $I=1$:}  
\STATE \qquad \textbf{Find the next record-breaker:} 
\STATE \qquad \qquad Call Algorithm \ref{SNRB} (SNRB): set $ [I, N,\text{SP}]\leftarrow $SNRB$(N, \bB^H_{N},H,\rho,\delta)$
\STATE \textbf{Output:} $[N,\text{SP}]$.
\end{algorithmic} 
\end{algorithm}
\begin{remark}
In Algorithm \ref{SLRB}, we need to start the dyadic approximation from a nontrivial stating level $N^*(\rho,\delta)$. 
$N^*(\rho,\delta)$ is defined later in equation \eqref{start2} for technical reasons. 
Specifically, $N^*(\rho,\delta)$ is to ensure a proper bound on the likelihood ratio of the change-of-measure that we will apply in Algorithm \ref{SNRB}. 
In practice, we can twist the record-breaker parameters, $(\rho,\delta, \nu, \nu^*)$, to obtain a reasonable starting level. 
\end{remark}

In Algorithm \ref{SNRB}, we apply a change of measure technique to find the next record-breaker or claim that the record will never be broken again. The basic idea is as follows.
Assuming that we have already simulated the fBM until level $n$. We denote by $\tau$ the level of the next record-breaker.
Our goal is to find the next record-breaker or claim that $\tau=\infty$, which means the record will never be broken again. In order to determine whether $\tau<\infty$, we essentially want to generate a Bernoulli random variable with success probability 
$\PP_{n}(\tau<\infty):=\PP(\tau<\infty|\bB_{n}^H).$ 
However, the exact value of $\PP_{n}(\tau<\infty)$ is intractable. To overcome this difficulty, note that we can rewrite the probability as
\begin{equation}\label{eq:ber_main}
\PP_{n}(\tau<\infty)=\sum_{m=1}^{\infty}\PP_{n}(\tau=n+m)=\sum_{m=1}^{\infty}\frac{\PP_{n}(\tau=n+m)}{g_{n}(m)}\cdot g_{n}(m),
\end{equation}
where $\{ g_{n}(m)\}_{m\ge1} $ is a carefully designed distribution taking value in $\ZZ^+$ such that 
$${\PP_{n}(\tau=n+m)}/ {g_{n}(m)}\le 1,$$ 
for all $m\ge 1$. Note that $\{ g_{n}(m)\}_{m\ge1} $ can be interpreted as a potential realization of $\tau-n$.

We now introduce our specific choice of $\{ g_{n}(m)\}_{m\ge 1}$.
\#  \label{gm}
g_{n}(m)=Z_n^{-1}\cdot 2^{{n}+m}\cdot \exp\bigl\{-\rho^2/8\cdot 2^{2({n}+m)\delta} \bigr \}, 
\#
where $Z_n$ is the normalizing constant such that the sum of $ g_{n}(m)$  equals to one.
If we first generate $M$ from $\{ g_{n}(m)\}_{m\ge1} $ and then, based on the value of $M$, generate a Bernoulli random variable with success probability ${\PP_{n}(\tau=n+M)}/ {g_{n}(M)} $. Then it is easy to see from equation \eqref{eq:ber_main} that we obtain a Bernoulli random variable whose success probability is $\PP_{n}(\tau<\infty)$. Moreover, if the Bernoulli trail is a success, we also know that the next record-breaker will happen at level $M$.


We next provide the general idea as how to generate a Bernoulli random variable with success probability ${\PP_{n}(\tau=n+M)}/ {g_{n}(M)}$.
Given a realization $M=m$, in order to generate the desired Bernoulli random variable, we apply the change-of-measure technique. Note that $\PP_n(\tau=n+m)=\EE_{n}[1\{\tau=n+m \}]$. 
Thus, we first sample $\tau$ and the associated fBM sample paths $\omega$ from a properly constructed probability measure $\QQ^{(m)}_{n}$. Then, we have 
\# \label{intuition}
\EE_{\QQ^{(m)}_{n}}\left[ \frac{\ud\PP_{n}(\omega)}{\ud\QQ^{(m)}_{n}(\omega)}\cdot \frac{ 1\{\tau=n+m\}}{g_{n}(m)}\right]=\frac{\PP_{{n}}(\tau=n+m)}{g_{n}(m)},
\# 
where $\EE_{\QQ^{(m)}_{n}}$ is the expectation with respect to $\QQ^{(m)}_{n}$. If we can upper bound the likelihood ratio $\ud\PP_{n}/\ud\QQ^{(m)}_{n}$ by $g_n(m)$, then we generate $U$, a uniform random variable over $[0,1]$ independent of everything else.  When
\$U< \frac{\ud \PP_{n}(\omega)}{\ud \QQ^{(m)}_{n}(\omega)}\cdot \frac{1\{\tau=n+m\}}{g_{n}(m)},\$ we accept $\omega$ as the trajectory leading to the next record-breaker, i.e. we get $\bB_{\tau}^H$; otherwise, we claim that $\tau=\infty$.

The actual construction of change-of-measure involves more subtleties. For example, we not only need to consider the level of the next record-breaker, $n+m$, but also the actual time index, and whether we break the record due to a large positive deviation or negative deviation. The details of these subtleties are deferred to Section \ref{sec:ECM}. It is also in general not easy to bound the likelihood ratio $\ud\PP_{n}/\ud\QQ^{(m)}_{n}$. We thus introduce another technical step before we give the actual algorithm.
We define the \textquotedblleft bounded conditional expectation\textquotedblright  condition (BCE). We will later show in Section \ref{sec:ECM} that under this condition, the likelihood ratio is properly bounded. In addition, this condition only gets violated a finite number of times almost surely. 
\begin{definition} \label{BCE} (BCE condition) We say that $\bB^H_n$ satisfies the bounded conditional expectation condition, if for all $m\ge1$ and $1\le k \le 2^{n+m-1}$, 
\# \label{bce}
|\mu_n(m,k)|=\bigl | \bbeta^{\top} \EE[\balpha_{n}(m,k) | \bB^H_n] \bigr|  \le \rho/2\cdot \ell_{n+m},
\# 
where 
\#   \label{bal}
\bbeta=(1/2,-1,1/2)^{\top},\ 
\balpha_{n}(m,k)=(B^H(t^{{n}+m}_{2k-2}),B^H(t^{{n}+m}_{2k-1}),B^H(t^{{n}+m}_{2k}))^{\top}.
\#  
\end{definition}
When simulating the next record-breaker, we will first check if $ \bB^H_n$ satisfies the BCE condition. If not, we will keep generating more refined levels under the nominal measure until the BCE condition is satisfied, before we apply the change-of-measure. 
The details of how to check whether the BCE condition is met are laid out in Algorithm \ref{BCEC}, which we defer to Section \ref{sec:ECM} after we introduce a few more technical results.
Similarly, we also defer the details of the change-of-measure to Algorithm \ref{BCEC} in Section \ref{sec:ECM}.

\begin{algorithm}
\caption{Simulation of the Next Record-Breaker (SNRB)} \label{SNRB}
\begin{algorithmic}[1]
\STATE \textbf{Input:}  Current level $n$ and values $\bB^H_{n}$,  Hurst index $H$, record-breaker parameter $\rho,\delta$. 
\STATE \textbf{Initialize:} Set AP, an array of values of fBM at augmented points, to be NULL, and $J\leftarrow 0$. 
\STATE \textbf{Refine dyadic approximation until BCE condition is satisfied:}
\STATE \qquad \textbf{While $J=0$: } 
\STATE \qquad \qquad \textbf{Checking BCE condition:} 
\STATE \qquad \qquad \qquad Call Algorithm \ref{BCEC} (BCEC): set $J\leftarrow \text{BCEC}(H,\rho,\delta,n,\bB^H_{n})$.
\STATE \qquad \qquad \qquad \textbf{If $J=1$: break.}
\STATE \qquad \qquad \textbf{Refine dyadic approximation to next level:} 
\STATE \qquad \qquad \qquad  Sample fBM at $D_{n+1}/D_{n}$, under the nominal measure conditional on $\bB^H_{n}$, and then store it in AP. Update $n\leftarrow n+1$, $\bB^H_{n}\leftarrow \text{Union}(\bB^H_{n},\text{AP})$.
\STATE \qquad \qquad \qquad  \textbf{If a record-breaker happens at level $n$: break.} 
\STATE \textbf{If $J=1$, apply change-of-measure:} 
\STATE \qquad Sample $M$ from distribution $ \{g_{n}(m)\}_{m\ge 1}$. 
\STATE \qquad  Call Algorithm \ref{ECM} (ECM): set $[I,\text{AP}]\leftarrow $ECM($H,\rho, \delta,n,\bB^H_{n},M$).\\ 
\STATE \textbf{Output:} 
\STATE \qquad \textbf{If $J=0$ :} return $[1, n, \bB^H_{n}] $,
\STATE \qquad \textbf{Else if $I=1$ :} return $[1, n+M, \text{Union}(\bB^H_{n},\text{AP})] $,
\STATE \qquad  \textbf{Else:} return: $[0, n ,\bB^H_{n}] $.    
\end{algorithmic}
\end{algorithm}

\subsection{Change of measure}\label{sec:ECM}

In this section, we provide details of our construction of a new measure under which the record-breaker is more likely to happen.
Recall that the setting is that we have already generated $\bB^H_{n}$ and a proposed next record breaking level $n+m$, where $m$ is sampled from distribution $\{g_n(m)\}_{m\ge 1}$. Our goal is to find a way to generate a path such that the next record-breaker is more likely to happen at level $n+m$, and the likelihood ratio can be properly bounded.

Recall the definition of $\balpha_n(m,k)$ and $\bbeta$ in equation \eqref{bal}. Then based on the definition of the record-breakers, we say that a record is broken at level $n+m$, position $k$, if
\$
|\bbeta^{\top} \balpha_n(m,k) |>\rho \ell_{n+m}=\rho 2^{-(H-\delta)(n+m)}.
\$ 
Furthermore, we say that the record-breaker is up-crossing if
\$
\bbeta^{\top} \balpha_n(m,k) >\rho\ell_{n+m},
\$ 
and downward-crossing if
\$
\bbeta^{\top} \balpha_n(m,k) <-\rho \ell_{n+m}.
\$   
Let 
$$\Xi_{n}^{(m,k)}(\theta)=\EE_n[ \exp\{ \theta\cdot \bbeta^{\top} \balpha_n(m,k) \}]=\EE\bigl[ \exp\{ \theta\cdot \bbeta^{\top} \balpha_n(m,k) \}      | \bB^H_{n}\bigr], $$
which is the moment generating function of $\bbeta^{\top} \balpha_n(m,k)$ conditional on the value of $\bB^H_{n} $. 
We also assume that the conditional probability density of $\balpha_{n}(m,k) $ under measure $\PP_{n}(\cdot)$  is $ \psi_{n}^{(m,k)}.$ 
We also denote the conditional covariance matrix and expectation of $\balpha_{n}(m,k)$ by $\bSigma_{\balpha_n(m,k)}$ and $\bmu_n(m,k)$, respectively.

In what follows, we start by introducing the change-of-measure under the BCE condition. We then show that the BCE condition can  only be violated a finite number of times almost surely. Under the BCE condition, we first sample $K$ from the set $\{1,2,\cdots,2^{{n}+m-1}\}$ uniformly. The random variable $K$ roughly proposes the position of the next record-breaker. 
We also sample $\pi$ from the set $\{+,-\}$ uniformly. The random variable $\pi$ roughly proposes whether the record-breaker is up-crossing ($+$) or downward-crossing ($-$). Second, given $M=m, K=k$, if $\pi=+$, we apply exponential tilting to $\psi_{n}^{(m,k)}$ with tilting parameter  
\begin{equation}\label{eq:tilt}
\theta_{n}^+(m)=\rho/2\cdot 2^{(m+{n})(H+\delta)} \footnote{The tilting parameter $\theta_{n}^+(m)$ is carefully chose to make sure that the record-breaking event is more likely to happen under the tilted measure and the likelihood ratio is suitably bounded. See Section \ref{sec:alg_proof} for more details.}.
\end{equation}
Specifically, we sample $\balpha_{n}(m,k) $ from the density
\$ 
\tilde{\psi}_{n}^{(m,k,+)}(x_1,x_2,x_3) = \psi_{n}^{(m,k)}(x_1,x_2,x_3) \cdot \exp\Bigl\{\theta^+_{n}(m)\cdot \bigl(\frac{1}{2}(x_1+x_3)-x_2  \bigr)-\log\bigl(\Xi_{n}^{(m,k)}(\theta^+_{n}(m))\bigr) \Bigr\}, 
\$ 
Note that the tilted distribution $\tilde{\psi}_{n}^{(m,k,+)} $ is still Gaussian. 
In particular, 
$\tilde{\psi}_{n}^{(m,k,+)} $ is the density of the multivariate Gaussian
$$N\bigl( \bmu_{n}(m,k)+\theta^+_{n}(m)\cdot \bSigma_{\balpha_n(m,k)}\bbeta,\bSigma_{\balpha_n(m,k)}\bigr).$$
If  $\pi=-$, we apply the exponential tilting to $\psi_{n}^{(m,k)}$ with tilting parameter  $\theta_{n}^-(m)=-\theta_{n}^+(m)$.

After we have sampled $\balpha_{n}(m,k) $ under the tilted measure, given the values of $\balpha_{n}(m,k)  $ and $ \bB^H_{n} $, we sample fBM at the remained dyadic points \$D_{{n}+m}/ (D_{n}\cup\{t^{{n}+m}_{2k-2},t^{{n}+m}_{2k-1},t^{{n}+m}_{2k}\})\$ under the nominal measure. This step is achieved by calculating the conditional expectation and covariance matrix, and then sampling from the corresponding multivariate Gaussian distribution. We use $\QQ_n$ to denote the tilted measure introduced as above.
We also denote $\QQ^{(m,k,+)}_{n}$ as the conditional probability measure $\QQ_{n}(\cdot| M=m, K=k, \pi=+)$. Let 
\$
\Theta^+_{n}(m,k)&:={\ud \PP_{n}}/{\ud \QQ^{(m,k,+)}_{n}}\cdot  g_n(m)^{-1}\cdot 2^{n+m}\\
&=g_{n}(m)^{-1}\cdot 2^{{n}+m}\cdot \exp\Bigl\{ -\theta_{n}^+(m)\cdot \bbeta^{\top} \balpha_n(m,k)+\log\bigl(\Xi_{n}^{(m,k)}(\theta_{n}^+(m))\bigr)   \Bigr \}.\$ 
Similarly, we define 
$\QQ^{(m,k,-)}_{n}$ as the conditional measure $\QQ_{n}(\cdot| M=m, K=k, \pi=-)$ and  
\$
\Theta^-_{n}(m,k)&:={\ud \PP_{n}}/{\ud \QQ^{(m,k,+)}_{n}}\cdot  g_n(m)^{-1}\cdot 2^{n+m}\\
&=g_{n}(m)^{-1}\cdot 2^{{n}+m}\cdot \exp\Bigl\{ -\theta_{n}^-(m)\cdot \bbeta^{\top} \balpha_n(m,k)+\log\bigl(\Xi_{n}^{(m,k)}(\theta_{n}^-(m))\bigr)   \Bigr \}.\$ 
From the definition of $g_n(m)$ in \eqref{gm}, the normalizing constant $Z_n$ is given by 
$$
Z_n=\sum_{m=1}^{\infty} 2^{{n}+m}\cdot \exp \{-\rho^2/8\cdot 2^{2({n}+m)\delta}  \}.
$$
We denote by
\# \label{start2}
N^*(\rho,\delta)=1+\sup\{n\ge1: Z_n>  1\}.  
\#
Note that for all $n \ge N^*(\rho,\delta) $, we have $Z_n \le 1 $. The next lemma shows that under the BCE condition, $\Theta^{+}_{n}(m,k)$'s and $\Theta^{-}_{n}(m,k)$'s are suitably bounded.
This result is important in constructing our change-of-measure procedure. 
\begin{lemma} \label{lrb}
For $n\ge N^*(\rho,\delta)$, under the BCE condition, 
when $M=m$, $K=k$, and $\pi=+$,  
$$\Theta^+_{n}(m,k)\cdot 1\left\{(1/2,-1,1/2)^{\top} \balpha_{n}(m,k)>\rho \ell_{{n}+m} \right\}\leq 1;$$ 
when $M=m$, $K=k$, and $\pi=-$, 
$$\Theta^-_{n}(m,k)\cdot 1\left\{(1/2,-1,1/2)^{\top} \balpha_{n}(m,k)<-\rho \ell_{{n}+m} \right\}\leq 1.$$ 
\end{lemma}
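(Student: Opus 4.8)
The plan is to write $\Theta^+_n(m,k)$ as a single exponential, identify the coefficient of $2^{2(n+m)\delta}$ in the exponent, and show it is nonpositive on the up-crossing event (the claim is trivial when the indicator vanishes). First I would substitute the definition of $g_n(m)$ from \eqref{gm} to obtain $g_n(m)^{-1}\cdot 2^{n+m}=Z_n\exp\{\rho^2/8\cdot 2^{2(n+m)\delta}\}$, so that
$$\Theta^+_n(m,k)=Z_n\exp\Bigl\{\tfrac{\rho^2}{8}2^{2(n+m)\delta}-\theta_n^+(m)\,\bbeta^\top\balpha_n(m,k)+\log\Xi_n^{(m,k)}(\theta_n^+(m))\Bigr\}.$$
Since $\balpha_n(m,k)$ is conditionally Gaussian given $\bB^H_n$, its conditional log-MGF is quadratic: writing $\mu_n(m,k)=\bbeta^\top\bmu_n(m,k)$ and $s_n^2(m,k):=\bbeta^\top\bSigma_{\balpha_n(m,k)}\bbeta=\VV_n(\bbeta^\top\balpha_n(m,k))$, I have $\log\Xi_n^{(m,k)}(\theta)=\theta\mu_n(m,k)+\tfrac12\theta^2 s_n^2(m,k)$.

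Next I would bound the three exponent terms on the up-crossing event $\{\bbeta^\top\balpha_n(m,k)>\rho\ell_{n+m}\}$. Because $\theta_n^+(m)=\tfrac{\rho}{2}2^{(n+m)(H+\delta)}>0$ and $\ell_{n+m}=2^{-(H-\delta)(n+m)}$, the linear term gives $-\theta_n^+(m)\bbeta^\top\balpha_n(m,k)<-\theta_n^+(m)\rho\ell_{n+m}=-\tfrac{\rho^2}{2}2^{2(n+m)\delta}$. The BCE condition \eqref{bce} gives $\theta_n^+(m)\mu_n(m,k)\le\theta_n^+(m)|\mu_n(m,k)|\le\tfrac{\rho^2}{4}2^{2(n+m)\delta}$. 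For the quadratic term I need the variance bound $s_n^2(m,k)\le 2^{-2(n+m)H}$, which then yields $\tfrac12(\theta_n^+(m))^2 s_n^2(m,k)\le\tfrac{\rho^2}{8}2^{2(n+m)\delta}$. Adding these to the leading $\tfrac{\rho^2}{8}2^{2(n+m)\delta}$, the coefficients of $2^{2(n+m)\delta}$ sum to $\tfrac18-\tfrac12+\tfrac14+\tfrac18=0$; combined with $Z_n\le 1$ (which holds precisely because $n\ge N^*(\rho,\delta)$, by \eqref{start2}) this gives $\Theta^+_n(m,k)\le Z_n\le 1$. The $\pi=-$ case is symmetric: substituting $\theta_n^-(m)=-\theta_n^+(m)$ and working on $\{\bbeta^\top\balpha_n(m,k)<-\rho\ell_{n+m}\}$, the linear term becomes $-\theta_n^-(m)\bbeta^\top\balpha_n(m,k)=\theta_n^+(m)\bbeta^\top\balpha_n(m,k)<-\tfrac{\rho^2}{2}2^{2(n+m)\delta}$, while the bias contribution $-\theta_n^+(m)\mu_n(m,k)\le\theta_n^+(m)|\mu_n(m,k)|$ and the even quadratic term are unchanged.

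The crux — and the step I expect to be most delicate — is the variance bound $s_n^2(m,k)\le 2^{-2(n+m)H}$, because the estimate is tight (the coefficients cancel exactly to $0$), so any slack in the constant breaks it. I would deliberately \emph{not} invoke Lemma \ref{var_bound} here: that lemma bounds the variance conditional on the \emph{finer} level $\bB^H_{n+m-1}$ and with constant $2$, whereas here we condition on the \emph{coarser} $\bB^H_n$, so its bound points in the wrong direction and is too loose to close the inequality. Instead I would use that conditioning reduces variance: since $(\balpha_n(m,k),\bB^H_n)$ is jointly Gaussian, $s_n^2(m,k)$ is deterministic and, by the law of total variance, at most the unconditional variance $\VV(\bbeta^\top\balpha_n(m,k))$. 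Writing $\bbeta^\top\balpha_n(m,k)=\tfrac12(\xi_2-\xi_1)$ as half the difference of the two consecutive fBM increments $\xi_1,\xi_2$ over the adjacent dyadic subintervals of length $h=2^{-(n+m)}$, stationarity of increments gives $\VV(\xi_1)=\VV(\xi_2)=h^{2H}$ and $\VV(\xi_1+\xi_2)=(2h)^{2H}$, whence $\VV(\bbeta^\top\balpha_n(m,k))=h^{2H}(1-2^{2H-2})<2^{-2(n+m)H}$ for every $H\in(0,1)$. This supplies exactly the constant needed to make the exponent nonpositive.
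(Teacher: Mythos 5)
Your proposal is correct and follows essentially the same route as the paper: the paper's auxiliary Lemma \ref{cor} likewise bounds the conditional log-MGF using the BCE condition for the mean and conditioning-reduces-variance plus stationary increments for the variance (obtaining $\sigma^2_{n}(m,k)\le 2^{-2(n+m)H}$ via $\V(X+Y)\le 2\V(X)+2\V(Y)$ applied to the two half-increments, rather than your exact computation $h^{2H}(1-2^{2H-2})$), and the paper's proof of Lemma \ref{lrb} then performs exactly your cancellation $\tfrac18-\tfrac12+\tfrac14+\tfrac18=0$ to conclude $\Theta^+_{n}(m,k)\le Z_n\le 1$ for $n\ge N^*(\rho,\delta)$, with the $\pi=-$ case handled symmetrically. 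Your side remark that Lemma \ref{var_bound} (constant $2$, conditioning at the finer level) would be too loose to close the argument is also accurate, and the paper indeed does not invoke it here.
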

We're now ready to present our actual algorithm.  We denote by 
\$
R_{{n}+m}=\sum_{k=1}^{2^{{n}+m-1}}1 \bigl\{ |\bbeta^{\top} \balpha_n(m,k)|>\rho \ell_{{n}+m}\bigr\},
\$
 the total number of record-breakers at level $n+m$. We also generate a uniformly distributed random variable $U$ over interval $[0,1]$, independent of everything else. When $\pi=+$, we return $1$, if 
\$
U< \Theta^+_{n}(m,k) \cdot  R_{{n}+m}^{-1}\cdot 1\left\{\bigl\{ \bbeta^{\top} \balpha_n(m,k) >\rho \ell_{{n}+m} \bigr\}  \cap \cap_{j=1}^{m-1}[\cC_{n}(j)]^c\right\}, 
\$
and return $0$ otherwise \footnote{We define $0/0=0$ by convention.}. Here $ C_{n}(j)$ denotes the event that there are record-breakers at level $n+j$. When $\pi=-$, the procedure is similar. The details of this simulation procedure are summarized in Algorithm \ref{ECM} and we prove that it works in Section \ref{sec:alg_proof}.
\begin{algorithm}
\caption{Exponential Change of Measure (ECM)} \label{ECM}
\begin{algorithmic}[1]
\STATE \textbf{Input:} Hurst index $H$, record-breaker parameters $\rho,\delta$, current level $n$ and values $\bB^H_{n}$, level difference $m$. 
\STATE \textbf{Initialize:} set AP, the array of values of fBM at augmented points $D_{n+m}/D_{n}$, to be NULL.
\STATE \textbf{Sample position of  record-breaker:}  sample $K=k$ uniformly from $\{1, 2, \cdots, 2^{n+m-1} \}. $
\STATE \textbf{Determine up-crossing or downward-crossing:}  sample $\pi$ uniformly from $\{+,- \}. $
\STATE \textbf{Sample candidate path:} 
\STATE \qquad  \textbf{If $\pi=+$:} sample $\balpha_{n}(m,k)  $ from the exponential tilted measure $\tilde{\psi}_{n}^{(m,k,+)}$.
\STATE \qquad  \textbf{If $\pi=-$:} sample $\balpha_{n}(m,k)  $ from the exponential tilted measure $\tilde{\psi}_{n}^{(m,k,-)}$.
\STATE \qquad  Sample fBM at the remained points until the discretization level $n+m$.
\STATE \textbf{Calculate likelihood ratio:} 
\STATE \qquad  \textbf{If $\pi=+$:} calculate $ \Theta_{n}^+(m,k) $. 
\STATE \qquad  \textbf{If $\pi=-$:} calculate $ \Theta_{n}^-(m,k) $. 
\STATE \textbf{Calculate total number of record-breakers at level $n+m$:} 
\STATE \quad \quad Set $R_{{n}+m}\leftarrow \sum_{k=1}^{2^{{n}+m-1}}1\bigl\{ | \bbeta^{\top} \balpha_n(m,k)|>\rho \ell_{{n}+m}\bigr\}. $
\STATE \textbf{Sample from uniform distribution:} generate $U\sim  \cU[0,1]$. 
\STATE \textbf{Determine the next record-breaker via acceptance-rejection:} 
\STATE \qquad  \textbf{If $\pi=+$:} 
set 
$$I \leftarrow 1\left\{U< \Theta^+_{n}(m,k) \cdot  R_{{n}+m}^{-1}\cdot 1\Bigl\{  \bigl\{ \bbeta^{\top} \balpha_n(m,k) >\rho \ell_{{n}+m}\bigr \} \cap \cap_{j=1}^{m-1}[\cC_{n}(j)]^c\Bigr\} \right\}.$$
\vspace{-5mm}
\STATE \qquad  \textbf{If $\pi=-$:} 
set
$$I\leftarrow 1\left\{U< \Theta^-_{n}(m,k) \cdot  R_{{n}+m}^{-1}\cdot 1\Bigl\{\bigl\{ \bbeta^{\top} \balpha_n(m,k) <-\rho \ell_{{n}+m} \bigr \} \cap \cap_{j=1}^{m-1}[\cC_{n}(j)]^c\Bigr\}\right\}.$$
\vspace{-5mm}
\STATE \textbf{If $I=1$:} AP=values of fBM at augmented points $D_{n+m}/D_{n}$.
\STATE \textbf{Output:} $I$ and AP.
\end{algorithmic}
\end{algorithm}

\begin{algorithm}
\caption{BCE Checking (BCEC)} \label{BCEC}
\begin{algorithmic}[1]
\STATE \textbf{Input:} Hurst index $H$, record-breaker parameters $\rho, \delta$, current level $n$ and values $\bB^H_{n}$.
\STATE \textbf{Initialize:} set $J\leftarrow 1$.
\STATE \textbf{Determine maximal checking level:}  
\STATE \qquad  Set $\gamma_n\leftarrow \max_{1\le i\le2^{n}+1} |(\bSigma_{n}^{-1}\bB^H_{n})_i|$. 
\STATE \qquad  Set $M_n \leftarrow \max \bigl\{1,  \lceil (H-\delta)^{-1}\cdot {\log_2\bigl((2^{n+1}+2)\gamma_n/\rho\bigr)}{}-n  \rceil \bigr\}.  $ 
\STATE  \textbf{For $1\le m \le M_n$: }
\STATE \qquad \textbf{For $1\le k \le 2^{n+m-1}$: }
\STATE \qquad \qquad  {Calculate conditional expectation $\mu_n(m,k)$. }
\STATE \qquad \qquad  \textbf{If $|\mu_n(m,k)|\ge \rho/2 \cdot 2^{-(n+m)(H-\delta)}$:} set $J \leftarrow 0$, break. 
\STATE \textbf{Output:} Return $J$. 
\end{algorithmic}
\end{algorithm}

It remains to show that the BCE condition is only violated a finite number times. In addition, we also need to 
have an efficient way to check whether the BCE condition is satisfied at a specific level $n$. 

Let \$ \cE_{n}=\{  | \mu_{n}(m,k) | > \rho/2 \cdot 2^{-(n+m)(H-\delta)},\ \text{for some}\ m\ge1\ \text{and}\ 1\le k \le 2^{n+m-1}\},\$
where $ \mu_{n}(m,k)$ is the conditional expectation defined in \eqref{bce}. By Definition \ref{BCE}, $\cE_{n}$ denotes the event that the BCE condition is violated at level $n$. We also define 
\$
\mathcal{N}_\cE=\sup \{ n\ge1: \cE_n\  \text{happens}\}
\$
\begin{lemma} \label{finitebce}
The BCE condition is violated a finite number times almost surely, i.e.
\$\PP\Bigl(\limsup_{n\rightarrow\infty} \cE_n\Bigr)=0.\$
Moreover, $\mathcal{N}_\cE$ has a finite moment generating function.
\end{lemma}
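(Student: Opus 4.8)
The plan is to exploit that each $\mu_n(m,k)$, viewed as a function of the random coarse vector $\bB^H_n$, is itself a centered Gaussian random variable, and to control its variance by an unconditional second-order increment variance that is already understood from the proof of Theorem~\ref{tail}. First I would observe that $\bbeta^\top\balpha_n(m,k)=\tfrac12 B^H(t^{n+m}_{2k-2})-B^H(t^{n+m}_{2k-1})+\tfrac12 B^H(t^{n+m}_{2k})$ is a centered second-order increment of fBM at mesh $\Delta_{n+m}=2^{-(n+m)}$, so its conditional mean $\mu_n(m,k)=\bbeta^\top\EE[\balpha_n(m,k)\mid \bB^H_n]$ is a linear functional of the Gaussian vector $\bB^H_n$ and hence a centered Gaussian. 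By the law of total variance, $\VV(\mu_n(m,k))=\VV\bigl(\bbeta^\top\balpha_n(m,k)\bigr)-\EE\bigl[\VV_n(\bbeta^\top\balpha_n(m,k))\bigr]\le \VV\bigl(\bbeta^\top\balpha_n(m,k)\bigr)$. Because fBM has stationary increments, this unconditional variance does not depend on $k$; a direct computation (in the same spirit as Lemma~\ref{var_bound}) gives $\VV(\bbeta^\top\balpha_n(m,k))=\tfrac{4-2^{2H}}{4}\,2^{-2H(n+m)}\le 2^{-2H(n+m)}$ for every $k$ and $m$.

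Next, since $\mu_n(m,k)$ is centered Gaussian with standard deviation at most $2^{-H(n+m)}$ and $\ell_{n+m}=2^{-(H-\delta)(n+m)}$, the threshold $\tfrac{\rho}{2}\ell_{n+m}$ expressed in standard-deviation units is at least $\tfrac{\rho}{2}2^{\delta(n+m)}$, so the Gaussian tail bound yields
\[
\PP\bigl(|\mu_n(m,k)|>\tfrac{\rho}{2}\,\ell_{n+m}\bigr)\le 2\exp\bigl\{-\tfrac{\rho^2}{8}\,2^{2\delta(n+m)}\bigr\}.
\]
Summing this over all $1\le k\le 2^{n+m-1}$ and $m\ge1$ and using countable subadditivity over the union defining $\cE_n$ gives $\PP(\cE_n)\le \sum_{m\ge1}2^{n+m}\exp\{-\tfrac{\rho^2}{8}2^{2(n+m)\delta}\}=Z_n$, exactly the normalizing constant of $\{g_n(m)\}_{m\ge1}$ in \eqref{gm}. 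This is the pleasant structural feature: the proposal $g_n$ is calibrated so that its total mass controls $\PP(\cE_n)$.

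Finally, the prefactor $2^{n+m}$ is overwhelmed by the doubly exponential factor $\exp\{-\tfrac{\rho^2}{8}2^{2(n+m)\delta}\}$, so $Z_n$ decays doubly exponentially in $n$ (and in particular $Z_n\le1$ for $n\ge N^*(\rho,\delta)$ by \eqref{start2}); thus $\sum_n Z_n<\infty$ and Borel--Cantelli gives $\PP(\limsup_n\cE_n)=0$, i.e. the BCE condition fails only finitely often almost surely and $\mathcal{N}_\cE<\infty$ a.s. For the moment generating function I would repeat the argument of Theorem~\ref{ucr}: bound the tail by $\PP(\mathcal{N}_\cE\ge n)\le\sum_{j\ge n}\PP(\cE_j)\le\sum_{j\ge n}Z_j\le C\,2^{n}\exp\{-c\,2^{2\delta n}\}$, then write $\EE[\exp\{t\mathcal{N}_\cE\}]=\int_0^\infty \PP\bigl(\mathcal{N}_\cE\ge \log(u)/t\bigr)\,\ud u$ and note that the doubly exponential tail makes the integral finite for every $t>0$.

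The main obstacle is the variance bound in the first step. The nontrivial point is to avoid computing the cross-covariance $\bbeta^\top\bSigma_{\balpha_n(m,k),\bB^H_n}\bSigma_n^{-1}$ explicitly, which is awkward because $\bSigma_n$ is singular (as $B^H(t^n_0)=0$) and requires a generalized inverse; the law-of-total-variance trick sidesteps this entirely and reduces everything to the stationary second-order increment variance, which is uniform in $k$ and decays like $2^{-2H(n+m)}$. Once that bound is in hand, the remaining steps are routine union bounds, the identification with $Z_n$, and the already-used moment generating function computation.
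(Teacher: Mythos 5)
Your proposal is correct and follows essentially the same route as the paper's proof: both observe that $\mu_n(m,k)$ is a centered Gaussian linear functional of $\bB^H_n$, bound its variance via the law of total variance by the unconditional variance of the second-order increment $\bbeta^{\top}\balpha_n(m,k)$ (of order $2^{-2H(n+m)}$), apply a Gaussian tail bound and a union bound over $(m,k)$, invoke Borel--Cantelli, and reuse the moment-generating-function argument of Theorem~\ref{ucr}. Your version is in fact slightly cleaner — you subtract (rather than add) the conditional-variance term, compute the increment variance $\tfrac{4-2^{2H}}{4}2^{-2H(n+m)}$ exactly, and note the pleasant identification of the union bound with the normalizing constant $Z_n$ — but these are refinements of the same argument, not a different one.
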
  


We next introduce the algorithm to check whether the BCE condition is satisfied at level $n$ given the value of $\bB^H_n$.
The development of the algorithm is highly nontrivial. In particular, directly checking by definition is impractical, as we needs to evaluate $\mu_n(m,k)$ for infinitely many terms. To overcome this difficulty, we need to further explore the structure of $\mu_n(m,k)$. We use $\gamma_{n}$ to denote the maximal absolute value of the entries in vector $ \bSigma_{n}^{-1}\bB^H_{n} $, where $\bSigma_{n}$ is the covariance matrix of  $\bB^H_{n} $. We have the following lemma to characterize the bound for $\mu_n(m,k)$.
\begin{lemma} \label{ceb}
For all $m\ge1$ and $1\le k \le 2^{n+m-1}$, we have 
\$
|\mu_{n}(m,k)| \le \gamma_{n}\cdot (2^{n}+1)\cdot 2^{-2({n}+m)H}.
\$
\end{lemma}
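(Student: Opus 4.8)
The plan is to use Gaussian conditioning to reduce $\mu_n(m,k)$ to an inner product that factors through $\bSigma_n^{-1}\bB^H_n$, and then to bound the resulting coefficient vector entrywise. Since $\bB^H_n$ and $\balpha_n(m,k)$ are jointly centered Gaussian, the conditional mean is linear, so (with the generalized inverse used in the definition of $\gamma_n$)
$$\mu_n(m,k)=\bbeta^\top\EE[\balpha_n(m,k)\mid\bB^H_n]=\mathrm{Cov}\bigl(\bbeta^\top\balpha_n(m,k),\bB^H_n\bigr)\,\bSigma_n^{-1}\bB^H_n=\sum_{i=1}^{2^n+1}w_i v_i,$$
where $v=\bSigma_n^{-1}\bB^H_n$ and $w_i=\mathrm{Cov}\bigl(\bbeta^\top\balpha_n(m,k),B^H(t^n_{i-1})\bigr)$. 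By the definition of $\gamma_n$ we have $|v_i|\le\gamma_n$ for every $i$, so $|\mu_n(m,k)|\le\gamma_n\sum_{i=1}^{2^n+1}|w_i|$, and the lemma reduces to the pointwise estimate $|w_i|\le 2^{-2(n+m)H}$.

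Next I would compute $w_i$ explicitly. Write $s=t^n_{i-1}$ and let $u_1=t^{n+m}_{2k-2}$, $u_2=t^{n+m}_{2k-1}$, $u_3=t^{n+m}_{2k}$, so that $u_2$ is the midpoint of $u_1,u_3$ and $u_1=u_2-h$, $u_3=u_2+h$ with $h=2^{-(n+m)}$. Substituting the covariance function \eqref{covfun} into $w_i=\tfrac12 r(u_1,s)-r(u_2,s)+\tfrac12 r(u_3,s)$, the decisive simplification is that $\bbeta^\top(1,1,1)^\top=0$: the term $\tfrac12 s^{2H}$ common to all three covariances cancels. What survives is a difference of two second-order central differences of the map $x\mapsto|x|^{2H}$, namely
$$w_i=\tfrac14\bigl[\phi(u_2-h)-2\phi(u_2)+\phi(u_2+h)\bigr]-\tfrac14\bigl[g(u_2-h)-2g(u_2)+g(u_2+h)\bigr],$$
with $\phi(x)=|x|^{2H}$ and $g(x)=|x-s|^{2H}$ (the first centered at $0$, the second at $s$), each evaluated with step $h$.

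The technical heart, and the step I expect to be the main obstacle, is the uniform bound $|f(y+h)-2f(y)+f(y-h)|\le 2h^{2H}$ for $f(x)=|x|^{2H}$, valid for every $y\in\R$ and every $h>0$. For $H\le 1/2$ (so $2H\le 1$) this follows at once by splitting the second difference into two forward differences and applying the H\"older bound $\bigl||a|^{2H}-|b|^{2H}\bigr|\le|a-b|^{2H}$ to each. For $H>1/2$ (so $2H\in(1,2)$) the function $f$ is $C^1$ and one writes $f(y+h)-2f(y)+f(y-h)=\int_0^h\bigl[f'(y+t)-f'(y-t)\bigr]\,\ud t$; since $f'(x)=2H\,\sign(x)|x|^{2H-1}$ is H\"older of order $2H-1$, the integrand is bounded by $4H\,t^{2H-1}$, which integrates to $2h^{2H}$. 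The delicate point that forces the split is that $f''(x)\sim|x|^{2H-2}$ is non-integrable near the origin when $H<1/2$, so the naive Taylor/Peano-kernel estimate is unavailable in that regime; the two arguments above sidestep $f''$ entirely. Granting this bound, $|w_i|\le\tfrac14(2h^{2H})+\tfrac14(2h^{2H})=h^{2H}=2^{-2(n+m)H}$, and summing over the $2^n+1$ indices gives $|\mu_n(m,k)|\le\gamma_n(2^n+1)2^{-2(n+m)H}$, as claimed.
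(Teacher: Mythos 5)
Your proposal is correct, and its skeleton is exactly the paper's: write $\mu_n(m,k)=\bbeta^{\top}\bSigma_{n}^{(m,k)}\bSigma_{n}^{-1}\bB^H_{n}$, bound the entries of $\bSigma_{n}^{-1}\bB^H_{n}$ by $\gamma_n$, and reduce the lemma to the entrywise bound $|w_i|\le 2^{-2(n+m)H}$ on $\bbeta^{\top}\bSigma_{n}^{(m,k)}$, which after the cancellation coming from $\bbeta^{\top}(1,1,1)^{\top}=0$ is a combination of two second differences of $x\mapsto|x|^{2H}$ with step $2^{-(n+m)}$. Where you genuinely differ is the proof of the central second-difference estimate. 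The paper rescales so that the differences are taken at integer points with step $1$, considers $\phi(x)=(x+2)^{2H}+x^{2H}-2(x+1)^{2H}$, and uses Jensen's inequality on $\phi'$ to show $\phi$ is decreasing and convex on $[0,\infty)$ when $H>1/2$ (increasing and concave when $H<1/2$), whence $|\phi(i)|\le|2^{2H}-2|<2$ at every nonnegative integer; multiplying by $\Delta_{n+m}^{2H}/4$ gives the entry bound. You instead prove the scale-free statement $|f(y+h)-2f(y)+f(y-h)|\le 2h^{2H}$ for \emph{all} real $y$ and $h>0$: by splitting into two first differences and subadditivity of $x\mapsto x^{2H}$ when $2H\le 1$, and by writing the second difference as $\int_0^h[f'(y+t)-f'(y-t)]\,\ud t$ and using the $(2H-1)$-H\"older continuity of $f'$ when $2H>1$ (your integrand bound $4Ht^{2H-1}$ and its integral $2h^{2H}$ both check out). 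Your version has two modest advantages: it covers in one stroke the triples that straddle the conditioning point (arguments such as $-1,0,1$), which the paper's positive-integer analysis of $\phi$ handles only implicitly through the absolute values, and it does not care whether the evaluation points are lattice points. The paper's version yields the slightly sharper constant $|2^{2H}-2|$ in place of $2$, but nothing in the lemma or its later use requires that refinement, so the two routes deliver the same conclusion.
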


According to Lemma \ref{ceb}, for a fixed level $n$ and values $\bB^H_n$, the decay rate of $ |\mu_{n}(m,k)|$ (with respect to $m$) is $O(2^{-2mH})$. However, to check the BCE condition, we only need to compare $ |\mu_{n}(m,k)|$ with $\rho/2 \cdot 2^{-(n+m)(H-\delta)}$, whose decay rate (with respect to $m$) is $O(2^{-m(H-\delta)})$. Hence, when 
\$
m \ge M_n=\max \Bigl\{1, \big\lceil (H-\delta)^{-1}\cdot {\log_2\bigl((2^{n+1}+2)\gamma_n/\rho\bigr)}{}-n \big\rceil \Bigr\},
\$
the following inequality 
\$
|\mu_{n}(m,k)| \le \rho/2 \cdot 2^{-(n+m)(H-\delta)}
\$
always holds. This implies that to check whether BCE condition holds at level $n$, we only need to calculate a finite collection of $\mu_{n}(m,k)$, i.e. $m=1, \dots, M_n$. The details is summarized in Algorithm \ref{BCEC}, which outputs $1$ when the BCE condition is satisfied and $0$, otherwise.

\subsection{Proof of algorithm} \label{sec:alg_proof}
In this section, we provide detailed proof to show that our algorithm actually works. The proof is divided into two main steps (two Theorems).
We first show that under the BCE condition, for the output of Algorithm \ref{SNRB}, $\QQ_n(M=m, I=1)=\QQ_{n}(\tau=n+m)$ and $\P(I=0)=\P_{n}(\tau=\infty)$ (Theorem \ref{thmA1}),  where $\QQ_n$ denotes the measure induced by Algorithm \ref{SNRB}.
We then show that when $M=m$ and $I=1$, the output path, i.e. the fBM on the augmented points $D_{{n}+m}/D_{{n}}$,
follows from the distribution $\PP_{n}(\cdot|\tau=n+m)$ (Theorem \ref{thmA2}). 

 
\begin{theorem} \label{thmA1}
For the output of Algorithm \ref{SNRB}, when $n\ge N^*(\rho,\delta)$ and the BCE condition holds, 
$I$, is a Bernoulli random variable with success probability $\P_{n}(\tau<\infty)$. 
Moreover, \$\QQ_n(M=m, I=1)=\P_{n}(\tau=n+m).\$ 
\end{theorem}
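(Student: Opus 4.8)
The plan is to prove the stronger identity $\QQ_n(M=m,I=1)=\PP_n(\tau=n+m)$ for every $m\ge1$; summing over $m$ and using the decomposition \eqref{eq:ber_main}, namely $\PP_n(\tau<\infty)=\sum_{m\ge1}\PP_n(\tau=n+m)$, then immediately gives that $I$ is Bernoulli with success probability $\PP_n(\tau<\infty)$. Since the algorithm first draws $M$ from $\{g_n(m)\}_{m\ge1}$, I would write $\QQ_n(M=m,I=1)=g_n(m)\,\QQ_n(I=1\mid M=m)$, so that the whole argument reduces to computing the conditional acceptance probability $\QQ_n(I=1\mid M=m)$ at a fixed level difference $m$ and showing it equals $g_n(m)^{-1}\PP_n(\tau=n+m)$.

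To compute $\QQ_n(I=1\mid M=m)$, I would condition further on the position $K=k$ and the sign $\pi\in\{+,-\}$, which are sampled independently and uniformly, each pair carrying weight $2^{-(n+m-1)}\cdot\tfrac12=2^{-(n+m)}$. On the event $\pi=+$, write $A^+:=\{\bbeta^\top\balpha_n(m,k)>\rho\ell_{n+m}\}\cap\bigcap_{j=1}^{m-1}[\cC_n(j)]^c$. Because $U\sim\cU[0,1]$ is independent of everything else, and because Lemma \ref{lrb} (which requires exactly the hypotheses $n\ge N^*(\rho,\delta)$ and the BCE condition) guarantees that the acceptance argument $\Theta^+_n(m,k)\cdot R_{n+m}^{-1}\cdot 1\{A^+\}$ takes values in $[0,1]$, the conditional acceptance probability is exactly $\EE_{\QQ^{(m,k,+)}_n}\!\bigl[\Theta^+_n(m,k)\cdot R_{n+m}^{-1}\cdot 1\{A^+\}\bigr]$, with the analogous expression for $\pi=-$.

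The key step is the change of measure. By definition $\Theta^+_n(m,k)=\tfrac{\ud\PP_n}{\ud\QQ^{(m,k,+)}_n}\cdot g_n(m)^{-1}\cdot 2^{n+m}$, and since the construction tilts only the triple $\balpha_n(m,k)$ while generating all remaining dyadic points at level $n+m$ from the nominal conditional law, the Radon--Nikodym derivative is precisely the exponential-tilting factor. Pulling it through converts the expectation into $g_n(m)^{-1}\,2^{n+m}\,\EE_{\PP_n}[R_{n+m}^{-1}\,1\{A^+\}]$. Adding the $\pi=+$ and $\pi=-$ contributions merges $A^+$ and the corresponding $A^-$ into the single event that position $k$ is a record-breaker at level $n+m$ while $\bigcap_{j=1}^{m-1}[\cC_n(j)]^c$ holds. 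Summing over $k=1,\dots,2^{n+m-1}$, the position indicators add up to $R_{n+m}$, which cancels the weight $R_{n+m}^{-1}$ (using the convention $0/0=0$, so $R_{n+m}^{-1}R_{n+m}=1\{R_{n+m}\ge1\}=1\{\cC_n(m)\}$). This collapses the expression to $g_n(m)^{-1}\,\PP_n\bigl(\cC_n(m)\cap\bigcap_{j=1}^{m-1}[\cC_n(j)]^c\bigr)=g_n(m)^{-1}\PP_n(\tau=n+m)$, since that intersection is exactly the event $\{\tau=n+m\}$. Here the prefactor $2^{-(n+m)}$ from sampling $(K,\pi)$ cancels the $2^{n+m}$ carried by $\Theta$, and multiplying by the outer $g_n(m)$ cancels $g_n(m)^{-1}$, yielding $\QQ_n(M=m,I=1)=\PP_n(\tau=n+m)$.

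I expect the main obstacle to be justifying that the acceptance argument really lies in $[0,1]$, so that $\EE[1\{U<x\}]=x$ holds with no truncation; this is exactly where Lemma \ref{lrb}, and hence the BCE condition together with $n\ge N^*(\rho,\delta)$, is indispensable. The second delicate point is the bookkeeping showing that the importance weight $R_{n+m}^{-1}$ over positions precisely unbiases the uniform choice of $K$, turning a single-position acceptance test into the correct first-passage probability $\PP_n(\tau=n+m)$.
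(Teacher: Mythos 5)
Your proposal is correct and follows essentially the same route as the paper's proof: decompose over $M$, $K$, and $\pi$; invoke Lemma \ref{lrb} (under the BCE condition and $n\ge N^*(\rho,\delta)$) so the acceptance argument lies in $[0,1]$ and the acceptance probability equals its expectation; unwind the likelihood ratio $\Theta^{\pm}_n(m,k)$ to pass from $\QQ^{(m,k,\pm)}_n$ back to $\PP_n$; and then merge the up/down-crossing indicators and sum over positions so that $R_{n+m}^{-1}$ cancels, leaving $\PP_n(\cC_n(m)\cap\bigcap_{j=1}^{m-1}[\cC_n(j)]^c)=\PP_n(\tau=n+m)$. Your explicit remark that $R_{n+m}^{-1}R_{n+m}=1\{\cC_n(m)\}$ under the convention $0/0=0$ makes a step the paper leaves implicit, but the argument is the same.
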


\begin{proof}
By definition, we have 
\$
\QQ_{n}(I=1)=\sum_{m=1}^{\infty}\QQ_{n}(I=1,M=m)=\sum_{m=1}^{\infty}\QQ_{n}(I=1|M=m)\cdot g_n(m).
\$ 
We next show that $\QQ_{n}(I=1|M=m)=\PP_{n}(\tau=n+m)/ g_{n}(m)$. Note that if $X$ is a random variable taking value in $[0,1]$ and $U$ is a uniformly distributed random variable on $[0,1]$ independent of $X$, then $1\{U<X\}$ is a Bernoulli random variable of success probability $\EE[X]$. Recall that in our algorithm, both $\Theta^+_{n}(m,k)$ and $\Theta^+_{n}(m,k)$ are bounded by one. Then by the definition of $I$ and Lemma \ref{lrb}, we have the following decomposition
\$
& \QQ_{n}(I=1|M=m)\\
&=\EE_{\QQ^{(m)}_{n}}\Bigl[1\left\{\{  \bbeta^{\top} \balpha_n(m,K) > \ell_{{n}+m} \} \cap \cap_{j=1}^{m-1}[\cC_{n}(j)]^c\right\}\cdot \Theta^+_{n}(m,K) \cdot  R_{{n}+m}^{-1}\Big|\pi=+\Bigr] \cdot\frac{1}{2}\\
&\qquad +\EE_{\QQ^{(m)}_{n}}\Bigl[1\left\{\{  \bbeta^{\top} \balpha_n(m,K) <- \ell_{{n}+m} \} \cap \cap_{j=1}^{m-1}[\cC_{n}(j)]^c\right\}\cdot \Theta^-_{n}(m,K) \cdot  R_{{n}+m}^{-1}\Big|\pi=-\Bigr] \cdot\frac{1}{2}\\
&=\sum_{k=1}^{2^{{n}+m-1}}\EE_{\QQ^{(m,k,+)}_{n}}\Bigl[1\bigl\{\{   \bbeta^{\top} \balpha_n(m,k) > \ell_{{n}+m} \} \cap \cap_{j=1}^{m-1}[\cC_{n}(j)]^c\bigr\}\cdot \Theta^+_{n}(m,k) \cdot  R_{{n}+m}^{-1}\Bigr]\cdot \frac{1}{2^{n+m}}\\
&\qquad +\sum_{k=1}^{2^{{n}+m-1}}\EE_{\QQ^{(m,k,-)}_{n}}\Bigl[1\bigl\{\{   \bbeta^{\top} \balpha_n(m,k) > \ell_{{n}+m} \} \cap \cap_{j=1}^{m-1}[\cC_{n}(j)]^c\bigr\}\cdot \Theta^+_{n}(m,k) \cdot  R_{{n}+m}^{-1}\Bigr]\cdot \frac{1}{2^{n+m}}.
\$
By the definition of weighted likelihood ratio $\Theta^+_{n}(m,k)  $ and $ \Theta^-_{n}(m,k) $, we further have 
\$
& \QQ_{n}(I=1|M=m)\\
&=\sum_{k=1}^{2^{{n}+m-1}}\EE_{\QQ^{(m,k,+)}_{n}}\left[1\left\{\{  \bbeta^{\top} \balpha_n(m,k) > \ell_{{n}+m} \} \cap \cap_{j=1}^{m-1}[\cC_{n}(j)]^c\right\} \cdot  R_{{n}+m}^{-1}\cdot\frac{\ud{\PP}_{n}}{\ud\QQ^{(m,k,+)}_{n}} \cdot g_{n}(m)^{-1}\right]\\
&\qquad +\sum_{k=1}^{2^{{n}+m-1}}\EE_{\QQ^{(m,k,-)}_{n}}\left[1\left\{\{  \bbeta^{\top} \balpha_n(m,k) < -\ell_{{n}+m} \} \cap \cap_{j=1}^{m-1}[\cC_{n}(j)]^c\right\} \cdot  R_{{n}+m}^{-1}\cdot\frac{\ud{\PP}_{n}}{\ud\QQ^{(m,k,-)}_{n}}\cdot g_{n}(m)^{-1} \right].
\$
Note that as
\$
  1\left\{ | \bbeta^{\top} \balpha_n(m,k)| > \ell_{{n}+m}  \right\}&=1\left\{\bbeta^{\top} \balpha_n(m,k) > \ell_{{n}+m}\right\} +1\left\{  \bbeta^{\top} \balpha_n(m,k) <- \ell_{{n}+m} \right\},
\$
we finally have  
\$
\QQ_{n}(I=1|M=m)&=\sum_{k=1}^{2^{{n}+m-1}}\EE_{{n}}\Bigl[ 1\left\{\{  |\bbeta^{\top} \balpha_n(m,k) | > \ell_{{n}+m} \} \cap \cap_{j=1}^{m-1}[\cC_{n}(j)]^c\right\} \cdot  R_{{n}+m}^{-1} \Bigr]\cdot g_{n}(m)^{-1}\\
&=\EE_{{n}}\left[ 1\left\{\cC_{n}(m) \cap \cap_{j=1}^{m-1}[\cC_{n}(j)]^c\right\}\right]=\PP_{n}(\tau=n+m) / g_{n}(m) .
\$
Above all, for any $m\ge 1$, we have $\QQ_{n}(U=1,M=m)=\PP_{n}(\tau=n+m)$. 
\end{proof}

Based on Theorem \ref{thmA1}, if we obtain an output $I=0$ from Algorithm \ref{SNRB}, we claim that the record-breaker will not happen again after level $n$. Otherwise, i.e. if we obtain $I=1$ and associated $M=m$ from Algorithm \ref{SNRB}, we claim that the first next record-breaker happens at level $n+m$. In the later case, Algorithm \ref{ECM} also outputs a path leading to the next record-breaker. We next show that the output of Algorithm \ref{ECM} when $I=1$ is a realization of the fBM conditional on that the next record-breaker happens at level $n+m$. 

\begin{theorem}\label{thmA2}  
For the output of Algorithm \ref{ECM}, given $M=m$, $I=1$, 
the distribution of the values of fBM on the augmented points $D_{{n}+m}/D_{{n}}$, follows $\PP_{n}(\cdot|\tau={n}+m)$. 
\end{theorem}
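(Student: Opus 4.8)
The plan is to identify the law of the emitted path by testing it against an arbitrary bounded measurable functional $\varphi$ of the values of the fBM on the augmented points $D_{n+m}/D_n$, which I write as $\omega$. With $\QQ_n$ denoting the measure induced by Algorithm \ref{ECM}, the claim is equivalent to
$$\EE_{\QQ_n}[\varphi(\omega)\mid M=m,\,I=1]=\EE_{\PP_n}[\varphi(\omega)\mid\tau=n+m].$$
I would prove this by re-running the computation in the proof of Theorem \ref{thmA1} with the extra factor $\varphi(\omega)$ carried through every expectation; the choice $\varphi\equiv1$ recovers Theorem \ref{thmA1} exactly. Since that theorem already supplies $\QQ_n(I=1\mid M=m)=\PP_n(\tau=n+m)/g_n(m)$, the whole task collapses to evaluating the numerator $\EE_{\QQ_n}[\varphi(\omega)\,1\{I=1\}\mid M=m]$.

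For the numerator I would first condition on the uniform draws $K=k$ and $\pi=\pm$, each carrying weight $1/2^{n+m}$. Given the sampled path, $U$ is an independent uniform, and because $\Theta^+_n(m,k)\cdot R_{n+m}^{-1}\cdot 1\{\cdots\}\le1$ under the BCE condition (Lemma \ref{lrb}, together with $R_{n+m}^{-1}\le1$ on the acceptance event), integrating out $U$ replaces $1\{I=1\}$ by the acceptance weight itself. I would then invoke $\Theta^+_n(m,k)=(\ud\PP_n/\ud\QQ^{(m,k,+)}_n)\,g_n(m)^{-1}2^{n+m}$ to convert each $\QQ^{(m,k,+)}_n$-expectation into a $\PP_n$-expectation by change of measure; the factor $2^{n+m}$ cancels the $1/2^{n+m}$ position weight, leaving $g_n(m)^{-1}$ out front. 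The $\pi=-$ branch is identical with the downcrossing indicator, and combining the branches merges $1\{\bbeta^\top\balpha_n(m,k)>\rho\ell_{n+m}\}$ and $1\{\bbeta^\top\balpha_n(m,k)<-\rho\ell_{n+m}\}$ into $1\{|\bbeta^\top\balpha_n(m,k)|>\rho\ell_{n+m}\}$.

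At this stage I would have
$$\EE_{\QQ_n}[\varphi(\omega)1\{I=1\}\mid M=m]=g_n(m)^{-1}\sum_{k=1}^{2^{n+m-1}}\EE_{\PP_n}\Bigl[\varphi(\omega)R_{n+m}^{-1}1\bigl\{\{|\bbeta^\top\balpha_n(m,k)|>\rho\ell_{n+m}\}\cap\cap_{j=1}^{m-1}[\cC_n(j)]^c\bigr\}\Bigr].$$
The crucial step is the combinatorial collapse: since $R_{n+m}=\sum_k 1\{|\bbeta^\top\balpha_n(m,k)|>\rho\ell_{n+m}\}$ counts exactly the record-breakers at level $n+m$, the inner sum over $k$ of $R_{n+m}^{-1}1\{|\bbeta^\top\balpha_n(m,k)|>\rho\ell_{n+m}\}$ equals $1\{R_{n+m}\ge1\}=1\{\cC_n(m)\}$, the convention $0/0=0$ disposing of the $R_{n+m}=0$ case. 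Intersecting with $\cap_{j=1}^{m-1}[\cC_n(j)]^c$ gives precisely $1\{\tau=n+m\}$, so the numerator equals $g_n(m)^{-1}\EE_{\PP_n}[\varphi(\omega)1\{\tau=n+m\}]$. Dividing by $\QQ_n(I=1\mid M=m)=\PP_n(\tau=n+m)/g_n(m)$ from Theorem \ref{thmA1} cancels the $g_n(m)^{-1}$ and yields $\EE_{\PP_n}[\varphi(\omega)\mid\tau=n+m]$, as desired.

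I expect the main obstacle to be justifying the $R_{n+m}^{-1}$ normalization rigorously: a single accepted path with $R_{n+m}$ record-breakers can be reached through $R_{n+m}$ distinct values of $K$, and the division by $R_{n+m}$ is exactly what makes the probability of emitting a given path proportional to its $\PP_n$-mass on $\{\tau=n+m\}$ rather than to the number of positions at which the record is broken. The remaining subtleties---that $\varphi$ depends only on $\omega$ and so passes unchanged through the change of measure, that $\PP_n\ll\QQ^{(m,k,\pm)}_n$ because exponential tilting preserves the Gaussian family with unchanged covariance, and that the acceptance weight never exceeds one so the $U$-integration is valid---are all inherited from Lemma \ref{lrb} and the setup of Theorem \ref{thmA1}, and I would cite them rather than reprove them.
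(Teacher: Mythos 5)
Your proposal is correct and follows essentially the same route as the paper's proof: both expand the conditional law by conditioning on $K$, $\pi$, integrating out $U$ to expose the acceptance weight $\Theta^{\pm}_{n}(m,k)\cdot R_{n+m}^{-1}\cdot 1\{\cdots\}$, converting between $\QQ^{(m,k,\pm)}_{n}$ and $\PP_{n}$ via the likelihood ratio, collapsing the sum over $k$ of $R_{n+m}^{-1}$ times the record indicators into $1\{\tau=n+m\}$, and finishing with Theorem \ref{thmA1} for the normalizing probability. Your use of a general bounded test functional $\varphi$ in place of the paper's products of indicators $\prod_{t_j}1\{B^H(t_j)\in H_j\}$, and your running the chain of equalities from the $\QQ_{n}$ side to the $\PP_{n}$ side rather than the reverse, are cosmetic rather than substantive differences.
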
       

\begin{proof}
Let $\delta D^m_{n}=D_{{n}+m}/D_{{n}}$.
We consider a sequence of measurable sets $H_j$ where $j$ is chosen such that $t_j \in  \delta D^m_{n}$. We only need to show that
\$ 
\PP_{n}\bigl(B^H(t_j) \in H_j, t_j \in \delta D^m_{n} | \tau={n}+m\bigr)=\QQ_{n}\bigl(B^H(t_j) \in H_j, t_j \in \delta D^m_{n}\ \big |\ I=1, M=m\bigr).
\$
By definition, we have  
\$ 
& \PP_{n}\bigl(B^H(t_j) \in H_j, t_j \in \delta D^m_{n} | \tau={n}+m\bigr)={\PP_{n}\bigl(B^H(t_j) \in H_j, t_j \in   \delta D^m_{n},  \tau={n}+m\bigr)}\big / {\PP_{n}(\tau={n}+m)}\\
&\qquad = \EE_{{n}}\Bigl[ \prod_{t_j \in  \delta D^m_{n}}  1\left\{B^H(t_j) \in H_j\right\}  \cdot 1\left\{\tau={n}+m\right\}  \Bigr ] \Big/\PP_{n}(\tau={n}+m) .
\$
Similar to the proof of Theorem \ref{thmA1}, we have
\$
& \EE_{{n}}\Bigl[ \prod_{t_j \in  \delta D^m_{n}}  1\left\{B^H(t_j) \in H_j\right\}  \cdot 1\left\{\tau={n}+m\right\} \Bigr ]\\
&=\sum_{k=1}^{2^{{n}+m-1}}\EE_{{n}}\Bigl[ \prod_{t_j \in  \delta D^m_{n}}  1\{B^H(t_j) \in H_j\}  \cdot  R_{{n}+m}^{-1}\cdot 1\bigl\{\{ \bbeta^{\top} \balpha_n(m,k) > \ell_{{n}+m} \} \cap \cap_{j=1}^{m-1}[\cC_{n}(j)]^c\bigr\} \\
&\qquad +\prod_{t_j \in  \delta D^m_{n}}  1\{B^H(t_j) \in H_j\}  \cdot  R_{{n}+m}^{-1}\cdot 1 \bigl\{\{  \bbeta^{\top} \balpha_n(m,k) <- \ell_{{n}+m} \} \cap \cap_{j=1}^{m-1}[\cC_{n}(j)]^c\bigr\}\Bigr].
\$
The first term in above equation, corresponding to up-crossing record-breaker, and 
\$
&\sum_{k=1}^{2^{{n}+m-1}}\EE_{\QQ^{(m,k,+)}_{n}}\Bigl[\prod_{t_j \in  \delta D^m_{n}}  1\{B^H(t_j) \in H_j\} \\
&\qquad   \times  1\bigl\{\{  \bbeta^{\top} \balpha_n(m,k) > \ell_{{n}+m} \} \cap \cap_{j=1}^{m-1}[\cC_{n}(j)]^c\bigr\} \cdot  R_{{n}+m}^{-1}\cdot {\ud{\PP}_{n}}/{\ud\QQ^{(m,k,+)}_{n}} \Bigr]\\
&=\sum_{k=1}^{2^{{n}+m-1}}\EE_{\QQ^{(m,k,+)}_{n}}\Bigl[ \prod_{t_j \in  \delta D^m_{n}}  1\{B^H(t_j) \in H_j\} \\
&\qquad   \cdot 1\bigl\{\{  \bbeta^{\top} \balpha_n(m,k) > \ell_{{n}+m} \} \cap \cap_{j=1}^{m-1}[\cC_{n}(j)]^c\bigr\} \cdot  R_{{n}+m}^{-1}\cdot  \Theta^+_{n}(m,k) \Bigr]\cdot {g_{n(m)}}/{2^{n+m}}.
\$ 
Similar equation holds for the second term, corresponding to downward-crossing record-breaker.

On the other hand, we have 
\$
&\QQ_{n}\bigl(B^H(t_j) \in H_j, t_j \in \delta D^m_{n}\ \big |\ I=1, M=m\bigr)\\
&\qquad ={\QQ_{n}\bigl(B^H(t_j) \in H_j, t_j \in   \delta D^m_{n},  I=1\ \big | \ M=m \bigr)}\cdot   g_{n}(m) /  {\QQ_{n}(I=1, M=m)}
\$
We notice that
\$
& {\QQ_{n}\bigl(B^H(t_j) \in H_j, t_j \in   \delta D^m_{n},  I=1\ \big | \ M=m \bigr)}=\EE_{\QQ_{n}}\Bigl[1\{I=1\}\cdot \prod_{t_j \in  \delta D^m_{n}}  1\{B^H(t_j) \in H_j\} \ \big | \ M=m\Bigr ]\\
&=\sum_{k=1}^{2^{n+m-1}} \sum_{\pi=+,-} \EE_{\QQ^{(m,k,\pi)}_{n}}\Bigl[1\{I=1\} \prod_{t_j \in  \delta D^m_{n}}  1\{B^H(t_j) \in H_j\}\Bigr ]\cdot \frac{1}{2^{n+m}}.
\$
Now as
\$
&\EE_{\QQ^{(m,k,+)}_{n}}\Bigl[1\{I=1\} \ | \ \bB^H_{n+m}\Bigr ] \\
&\qquad =\EE_{\QQ^{(m,k,+)}_{n}}\Bigl [1\bigl\{\{   \bbeta^{\top} \balpha_n(m,k) > \ell_{{n}+m} \} \cap \cap_{j=1}^{m-1}[\cC_{n}(j)]^c\bigr\} \cdot \Theta^+_{n}(m,k) \cdot  R_{{n}+m}^{-1}\ \Big| \ \bB^H_{n+m}\Bigr],
\$
for the up-crossing part, we have
\$
&\sum_{k=1}^{2^{n+m}} \EE_{\QQ^{(m,k,+)}_{n}}\Bigl[1\{I=1\} \cdot \prod_{t_j \in  \delta D^m_{n}}  1\{B^H(t_j) \in H_j\}\Bigr ]=\sum_{k=1}^{2^{{n}+m-1}}\EE_{\QQ^{(m,k,+)}_{n}}\Bigl[ \prod_{t_j \in  \delta D^m_{n}}  1\{B^H(t_j) \in H_j\} \\
&\qquad   \times 1\bigl\{\{  \bbeta^{\top} \balpha_n(m,k) > \ell_{{n}+m} \} \cap \cap_{j=1}^{m-1}[\cC_{n}(j)]^c\bigr\} \cdot  R_{{n}+m}^{-1}\cdot  \Theta^+_{n}(m,k) \Bigr]\cdot \frac{1}{2^{n+m}}. 
\$ 
Similar result holds for the downward-crossing part. Finally, by Theorem \ref{thmA1}, $\PP_{n}(\tau={n}+m)=\QQ_{n}(U=1,M=m).$ Hence, we have 
\$ 
\PP_{n}\bigl(B^H(t_j) \in H_j, t_j \in \delta D^m_{n} | \tau={n}+m\bigr)=\QQ_{n}\bigl(B^H(t_j) \in H_j, t_j \in \delta D^m_{n}\ \big |\ U=1, M=m\bigr).
\$
\end{proof}

Theorem \ref{thmA1}, together with Theorem \ref{thmA2}, justifies the correctness of our algorithm.

\subsection{Computational complexity analysis} \label{cc}
In this section, we analyze the computational complexity\footnote{We refer to the computational complexity as the total number of uniform random variables we need to generate and the number of basic calculations.
For example, the Cholesky decomposition of an $n\times n$ convariance matrix has a computational complexity of $O(n^3)$.} of our algorithm. 
Note that the main component in our algorithm is to generate the last record-breaker and the associated path of the fBM.
Once we have found the last record-breaker, to achieve an $\epsilon$ error bound, the complexity is 
$O\left(2^{3N(\epsilon)}\right)$
if we use the naive Cholesky decomposition to sample the fBM at $D_{N(\epsilon)}/D_N$. 
Using a recursive construction of fractional Brownian bridge, we can reduce the complexity to 
$$O\left(2^{N(\epsilon)}\log\bigl(2^{N(\epsilon)}\bigr)\right)=O\left(\epsilon^{-1/(H-\delta)}\log\bigl(\epsilon^{-1/(H-\delta)}\bigr)\right).$$
We provide more details about this recursive construction towards the end of this section.

 We use $\bar N$ to denote the last level we need to generate in order to determine the level of the last record-breaker.
Note that in our algorithm, in order to apply the change-of-measure technique, we need to refine the dyadic approximation until the BCE condition is satisfied. Thus, $\bar N \geq N$. We also denote by
$ C_{\bar{N}} $ the associated computational complexity. This includes the cost of determining that 
there will be no more record-breakers.
The following theorem establishes that the computational cost is finite in expectation.

\begin{theorem} For the cost of generating the last record-breaker in Algorithm \ref{SLRB}, $ C_{\bar{N}}$, we have 
$$\EE[C_{\bar{N}}]<\infty.$$  
\end{theorem}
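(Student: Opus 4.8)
The plan is to dominate $C_{\bar N}$ by the cost of sampling the fBM up to the deepest level it ever reaches, times a crude count of the record-breaker searches, and then to show that every random quantity entering the bound has light enough tails. Write $\phi(\ell)$ for the cost of generating $\bB^H$ on the dyadic grid $D_\ell$: the recursive fractional Brownian bridge construction of this section gives $\phi(\ell)=O(\ell\,2^\ell)$, and even the naive Cholesky route gives $\phi(\ell)=O(2^{3\ell})$, so $\phi(\ell)\le C_0 2^{c\ell}$ for fixed constants $C_0,c$. I would split the work of Algorithm \ref{SLRB} into the nominal refinements (used both to detect record-breakers and to enforce the BCE condition), the BCE checks of Algorithm \ref{BCEC}, and the tilted proposals of Algorithm \ref{ECM}.

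The structural bookkeeping is as follows. Each call of Algorithm \ref{SNRB} returns after locating at most one record-breaker, so the number $T$ of calls is at most $R+1$, where $R$ is the total number of record-breakers; as these occur only at levels in $\{N^*,\dots,N\}$, we have $R\le N$, and $N$ has a finite moment generating function by Theorem \ref{ucr}. Every level at which path values are retained is at most $L:=\max(N,\mathcal{N}_{\cE}+1)$: accepted record-breakers (found during refinement or through an ECM proposal with $I=1$) sit at levels $\le N$, while the BCE refinement loop stops once the current level exceeds $\mathcal{N}_{\cE}$, which has a finite moment generating function by Lemma \ref{finitebce}; hence $L$ does too. The only mechanism that can push the generated level above $L$ is the final ECM call that declares $\tau=\infty$: to evaluate $R_{n_T+M_T}$ and the likelihood ratio it samples the candidate path up to level $n_T+M_T$ with $M_T\sim g_{n_T}$, so $\bar N\le L+M_T$.

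The crux is that this overshoot inflates the expected cost by only a bounded factor, because the proposal $g_n$ decays doubly exponentially. For any fixed $\beta>0$ and $M\sim g_n$,
\begin{equation*}
\EE[2^{\beta M}\mid n]=Z_n^{-1}\sum_{m\ge1}2^{\beta m}\,2^{n+m}\exp\bigl\{-\rho^2 2^{2(n+m)\delta}/8\bigr\},
\end{equation*}
and since the factor $\exp\{-\rho^2 2^{2(n+m)\delta}/8\}$ is doubly exponentially small in $m$, this series and the normalizer $Z_n$ are both dominated by their $m=1$ terms, so their ratio stays bounded uniformly in $n$; hence $\EE[2^{\beta M}\mid n]=O(1)$. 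Collecting the sampling and tilting contributions bounds this part of $C_{\bar N}$ by $C'(N+1)2^{c\bar N}$, where $\bar N\le L+M_T$; by Cauchy--Schwarz,
\begin{equation*}
\EE\bigl[(N+1)2^{c\bar N}\bigr]\le\bigl(\EE[(N+1)^2 2^{2cL}]\bigr)^{1/2}\bigl(\EE[2^{2cM_T}]\bigr)^{1/2}.
\end{equation*}
The first factor is finite because $N$ and $\mathcal{N}_{\cE}$, hence $L=\max(N,\mathcal{N}_{\cE}+1)$, have finite moment generating functions; the second is finite because $\EE[2^{2cM_T}\mid n_T]=O(1)$ uniformly in $n_T$, so $\EE[2^{2cM_T}]=O(1)$ by the tower property. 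It remains only to bound the BCE-checking cost.

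The step I expect to be the real obstacle is not the sampling cost but the BCE-checking cost. Checking BCE at level $n$ evaluates $\mu_n(m,k)$ for $1\le m\le M_n$ and all $k$, with $M_n=O\bigl((H-\delta)^{-1}(\log_2\gamma_n+n)\bigr)$ and $\gamma_n=\max_i|(\bSigma_n^{-1}\bB^H_n)_i|$; the number of evaluations behaves like $\gamma_n^{1/(H-\delta)}$ times an exponential in $n$. To finish one must control moments of $\gamma_n$: viewing $\bSigma_n^{-1}\bB^H_n$ as a centered Gaussian vector with covariance $\bSigma_n^{-1}$, $\gamma_n$ is a maximum of $2^n+1$ Gaussians whose variances are the diagonal entries of $\bSigma_n^{-1}$, so $\EE[\gamma_n^p]$ is controlled by $(\log 2^n)^{p/2}$ times a power of $\max_i(\bSigma_n^{-1})_{ii}$. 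The delicate point is to show this largest diagonal entry grows only polynomially in $2^n$ (via the decay rate of the smallest eigenvalue of the fBM covariance matrix), which keeps the expected BCE cost polynomial in $2^n$ and hence absorbable by the finite moment generating function of $L$; the remaining estimates are routine geometric-sum and moment-generating-function computations.
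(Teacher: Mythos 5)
Your proposal follows essentially the same route as the paper's proof: bound the deepest retained level by $\max(N,\mathcal{N}_{\mathcal{E}}+1)$ (with the final rejected change-of-measure proposal as the only possible overshoot), invoke the finite moment generating functions of $N$ (Theorem \ref{ucr}) and of the last BCE-violation level (Lemma \ref{finitebce}), use the doubly exponential decay of $g_n$ so that the tilted proposals inflate the expected cost by only a bounded factor, and control the BCE-checking cost through polynomial moments of $\gamma_n$ combined with Cauchy--Schwarz to decouple the dependent quantities. The one step you flag as delicate --- that moments of $\gamma_n$ grow only polynomially in $2^n$, via control of the smallest eigenvalue of the fBM covariance matrix --- is treated at exactly the same level of detail in the paper, which simply asserts $\EE[\gamma_j^{24}]\le C\,2^{(1+24H)j}$, so your sketch matches the paper's argument in both structure and rigor.
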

\begin{proof}
Recall that in our algorithm, the computational complexity arises from two main procedures. The first is finding the next record-breaker or claiming that there is no record-breakers any more. The second is refining dyadic approximation until the BCE condition is satisfied. 

Recall that $N$ is the time of the last record-breaker, which is also an upper bound for the number of while loops in Algorithm \ref{SLRB}.
We also recall that $\mathcal{N}_{\cE}$ is the last level the BCE condition is violated. Then we have
\$
\bar{N}\le \max\{N,\mathcal{N}_{\cE}+1\}.
\$

At current level $n$, we first analyze the computational complexity of checking the BCE condition. For now on, for simplicity, we use $C$ to denote a generic constant, which may differ from line to line. Based on Algorithm \ref{BCEC}, we need to calculate the conditional expectation until level $n+M_n=O(n+\log_2(\gamma_n))$, whose computational complexity is at most $O(2^{3n}\gamma_n^3).$ 
If the BCE condition is satisfied, then the computational complexity of applying the change-of-measure is of order
\$
\sum_{m=1}^{\infty}g_{n}(m)\cdot c_n(m)\le CZ_n^{-1} \cdot \sum_{m=1}^{\infty}2^{4({n}+m)}\cdot \exp\bigl\{-\rho^2/8\cdot 2^{2({n}+m)\delta} \bigr \} \le  C\cdot 2^{3n},
\$
where $c_n(m)=O(2^{3(n+m)})$, which denotes the computational complexity of Algorithm \ref{SNRB} conditional on $M=m$. 
If the BCE condition is broken, we refine the dyadic approximation until a level $ n^*\le \mathcal{N}_{\cE}+1$  where the BCE condition is satisfied. For each refinement level, we need to check the BCE condition. Hence, the corresponding computational complexity is $O(\sum_{j=n+1}^{n^*} 2^{3j}\gamma_j^3 ) .$ 

Above all, the total computational complexity for a while loop in Algorithm \ref{BCEC} can be upper bounded by 
\$C\cdot \left(  \sum_{j=1}^{\mathcal{N}_{\cE}+1} 2^{3j}\gamma_j^3+ 2^{3\bar{N}} \right),\$ 
As there are at most $\bar{N}$ loops.We have 
\# \label{ccb}
C_{\bar{N}} \le C\cdot \left(  \sum_{j=1}^{\mathcal{N}_{\cE}+1} 2^{3j}\gamma_j^3+ 2^{3\bar{N}} \right)\cdot \bar N
=C\sum_{j=1}^{\mathcal{N}_{\cE}+1} 2^{3j}\gamma_j^3 \cdot \bar N + C2^{3\bar{N}}\cdot \bar N.
\# 
Since $N$ and $\mathcal{N}_{\cE}$ have finite moment generating function, so it is $\bar N$. Then the second term in \eqref{ccb} in bounded in expectation. 
We next establish an upper bound for the first term. By Cauchy-Schwartz inequality, we have 
\$
\EE\left[\sum_{j=1}^{\mathcal{N}_{\cE}+1} 2^{3j}\gamma_j^3 \cdot \bar N\right] &\le \biggl( {\EE\biggl[ \Bigl(\sum_{j=1}^{\mathcal{N}_{\cE}+1}2^{6j}\Bigr)\cdot \Bigl(\sum_{j=1}^{\mathcal{N}_{\cE}+1}\gamma_j^6 \Bigr) \biggr]}\biggr)^{1/2}\cdot \bigl({\EE[\bar N^2]}\bigr)^{1/2} \\
&\le C \cdot \bigl( {\EE[ \bar{N}^2]}\bigr)^{1/2}\cdot \left({\EE\left[ \sum_{j=1}^{\mathcal{N}_{\cE}+1}2^{6\mathcal{N}_{\cE}} \cdot \gamma_j^6 \right]}\right)^{1/2}.
\$
For the last term in above inequality, recall that $ \gamma_n$ is the maximal absolute value of the entries in the vector $ \bSigma_{n}^{-1}\bB^H_{n} $, which follows multivariate normal distribution $N(0, \bSigma_{n}^{-1})$. By Fubini's Theorem and Cauchy-Schwartz inequality, we have 
\$
\EE\left[ \sum_{j=1}^{\mathcal{N}_{\cE}+1}2^{6\mathcal{N}_{\cE}} \cdot \gamma_j^6 \right]&=\EE\left[\sum_{j=1}^{\infty}2^{6\mathcal{N}_{\cE}}\cdot \gamma_j^6 \cdot 1\{j\le \mathcal{N}_{\cE}+1\} \right]\\
&=\sum_{j=1}^{\infty}\EE\bigl[2^{6\mathcal{N}_{\cE}} \cdot \gamma_j^6 \cdot 1\{j\le \mathcal{N}_{\cE}+1\}\bigr] \le \Bigl(\EE\bigl[2^{12\mathcal{N}_{\cE}}]\Bigr)^{1/2}\cdot   \sum_{j=1}^{\infty} \Bigl(\EE \bigl[ \gamma_j^{12} \cdot 1\{j\le \mathcal{N}_{\cE}+1\}\bigr]\Bigr)^{1/2} \\
&\le \Bigl(\EE\bigl[2^{12\mathcal{N}_{\cE}}]\Bigr)^{1/2}\cdot  \sum_{j=1}^{\infty} (\EE[\gamma_j^{24} ])^{1/4}\cdot \bigl( \PP(\mathcal{N}_{\cE}\ge j-1)\bigr)^{1/4}.
\$ 
It is easy to see that $\EE[\gamma_j^{24} ] \le C\cdot 2^j\cdot 2^{24Hj}=C\cdot 2^{(1+24H)j}$. Similar to the proof of Theorem \ref{ucr}, using the decay rate of $\PP(\mathcal{N}_{\cE}\ge j)$ proved in Lemma \ref{finitebce} , we obtain $ \EE[ \sum_{j=1}^{\mathcal{N}_{\cE}+1}2^{6\mathcal{N}_{\cE}}\cdot \gamma_j^6 ]<\infty. $ Hence, $\EE[C_{\bar{N}}]<\infty$, i.e. our algorithm has finite expected computational complexity.
\end{proof}

In our algorithm, once we have found the level of the last record-breaker $N$, we only need to refine the dyadic approximation until the desired truncation level $N(\epsilon)$, conditioning on that the record-breakers do not happen beyond $N$. Note that $N$ does not depend on $\epsilon$. Thus, $N$ can be treated as a constant. Assuming that $ N(\epsilon)>N  $, then we need to sample the fBM at time points $D_{N(\epsilon)}/D_N$ conditional on $\bB^H_N$, which involves $O(2^{N(\epsilon)})$ correlated Gaussian random variables. If we sample naively, i.e., calculating the conditional distribution first and sample from it, the computational complexity is $O(2^{3N(\epsilon)})$. This complexity can be reduced to $O(2^{N(\epsilon)}\log(2^{N(\epsilon)}))$ using a recursive construction of Gaussian bridge. We next introduce the details of this recursive construction. The algorithm is based on the Gaussian bridge construction developed in \cite{sottinen2014generalized}. We summarize the main idea in the following lemma.
\begin{lemma} \label{gb}
 Let $\{X_t\}_{t\ge 0}$ be a stationary Gaussian process with covariance function $r(s,t)$. Then the distribution of $X_t$ conditional on that $X_{t_k}=y_k, k=1,\cdots,n $ is same with that of $X^n_t$, which is defined recursively as 
\$
X^0_t&=X_t \\
X^k_t&=X^{k-1}_t-\frac{r_{k-1}(t,t_k)}{r_{k-1}(t_k,t_k)}\cdot \bigl(X^{k-1}_{t_k}-y_k\bigr),\ k=1,\cdots,n
\$
where 
\$
r_0(s,t)&=r(s,t) \\
r_k(s,t)&=r_{k-1}(s,t)-\frac{r_{k-1}(s,t_k)\cdot r_{k-1}(t_k,t)}{r_{k-1}(t_k,t_k)},\ k=1,\cdots,n
\$
\end{lemma}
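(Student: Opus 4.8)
The plan is to prove the statement by induction on the number of conditioning points $n$, using as the single building block the elementary one-point Gaussian conditioning formula, and gluing the steps together with the consistency (tower) property of conditional laws. Since the conditional distribution of a jointly Gaussian family is again Gaussian, it suffices at every stage to match the conditional mean and the conditional covariance. I note that stationarity plays no role here; only the joint Gaussianity and the covariance kernel $r$ are used.

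The building block I would isolate first is the following one-point fact, stated for an arbitrary (not necessarily centered) Gaussian process $Z$ with covariance kernel $\varrho$: the process $\widetilde Z_t := Z_t - \frac{\varrho(t,s_0)}{\varrho(s_0,s_0)}\bigl(Z_{s_0}-y\bigr)$ has the same law as $Z$ conditioned on $\{Z_{s_0}=y\}$, and its covariance kernel is $\varrho(s,t)-\varrho(s,s_0)\varrho(s_0,t)/\varrho(s_0,s_0)$. To see this, observe that $\widetilde Z$ is an affine functional of the Gaussian field $Z$, hence Gaussian; a direct computation gives $\EE[\widetilde Z_t]=\EE[Z_t]+\frac{\varrho(t,s_0)}{\varrho(s_0,s_0)}(y-\EE[Z_{s_0}])$ and $\mathrm{Cov}(\widetilde Z_s,\widetilde Z_t)=\varrho(s,t)-\varrho(s,s_0)\varrho(s_0,t)/\varrho(s_0,s_0)$, which are exactly the conditional mean and conditional covariance delivered by the Gaussian conditioning formula. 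Because the additive constant $y$ cancels in all covariances, the mean correction is automatic and the statement holds whether or not $Z$ is centered; in particular $\widetilde Z_{s_0}=y$ deterministically. This is precisely the $n=1$ case, with $\widetilde Z=X^1$ and the new kernel equal to $r_1$.

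For the inductive step I would assume that $X^k$ has the law of $X$ conditioned on $\{X_{t_j}=y_j\}_{j=1}^{k}$ and has covariance kernel $r_k$. Applying the building block to the Gaussian process $X^k$ (kernel $r_k$) at the single point $t_{k+1}$ produces exactly the recursion $X^{k+1}_t=X^k_t-\frac{r_k(t,t_{k+1})}{r_k(t_{k+1},t_{k+1})}(X^k_{t_{k+1}}-y_{k+1})$ together with the kernel update $r_{k+1}(s,t)=r_k(s,t)-r_k(s,t_{k+1})r_k(t_{k+1},t)/r_k(t_{k+1},t_{k+1})$, and it identifies the law of $X^{k+1}$ with that of $X^k$ conditioned on $\{X^k_{t_{k+1}}=y_{k+1}\}$. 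The one remaining point is to recognize this last law as $X$ conditioned on $\{X_{t_j}=y_j\}_{j=1}^{k+1}$: this is the tower property applied to the Gaussian vector $(X_{t_1},\dots,X_{t_{k+1}})$ augmented by $X_t$, namely that conditioning jointly on $k+1$ coordinates equals conditioning on the first $k$ and then on the last. This closes the induction and yields the claim at level $n$.

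The main obstacle, and the only place requiring genuine care, is the well-definedness and the conditioning-consistency at each stage. I must ensure $r_k(t_{k+1},t_{k+1})>0$ so the recursion makes sense; this fails precisely when $X_{t_{k+1}}$ is already almost surely determined by $X_{t_1},\dots,X_{t_k}$, in which case the division is read through the generalized inverse---consistent with the convention already adopted elsewhere in the paper---and that coordinate contributes nothing. I also need the tower property to be applied to the whole path rather than to finitely many coordinates; this follows because the finite-dimensional conditional laws are consistent in $t$ and hence determine the law of the process. The two moment identities in the building block are routine linear algebra, so in the write-up I would state them and omit the arithmetic.
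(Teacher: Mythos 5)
Your proposal is correct, but there is no internal proof to compare it against: the paper never proves Lemma \ref{gb}, it simply quotes the Gaussian-bridge construction of \cite{sottinen2014generalized}. (The only related argument in the paper is the one-point case, invoked---again by citation to the same reference---inside the proof of Lemma \ref{var_bound} in Appendix \ref{AA}.) Your induction is the natural way to make the lemma self-contained, and it goes through: the one-point block $\widetilde Z_t = Z_t - \varrho(t,s_0)\varrho(s_0,s_0)^{-1}(Z_{s_0}-y)$ is an affine image of a Gaussian process, hence Gaussian, and matching its mean function and covariance kernel against the Gaussian conditioning formulas identifies its law with the conditional law; the inductive step then needs only that conditioning sequentially on $t_1,\dots,t_{k+1}$ agrees with conditioning jointly. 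One caution on wording: since the conditioning events have probability zero, what you call the ``tower property'' is really the consistency of the Gaussian regular conditional distributions, which for nondegenerate covariance matrices follows from the density factorization $p(x\mid a,b)=p(x,a\mid b)/p(a\mid b)$ (equivalently, the quotient property of successive Schur complements), with degenerate cases handled by the generalized-inverse convention you mention; phrased this way the step is rigorous. Two of your side remarks deserve emphasis: (i) the building block must hold for non-centered processes, because after the first step $X^k$ has a nonzero mean function involving $y_1,\dots,y_k$, and you correctly verify that the constant $y$ cancels in all covariances; (ii) stationarity is indeed never used, which is more than cosmetic, since the paper applies the lemma in Section \ref{cc} to fBM, which is not a stationary process (it only has stationary increments)---so your hypothesis-free version is the one actually needed there.
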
 
 In our case, to sample the fBM at $D_{N(\epsilon)}/D_N$ conditional on $\bB^H_N$, we first sample without taking condition using the Davies and Harte method (\cite{davies1987tests}), which has complexity $ O(2^{N(\epsilon)}\log(2^{N(\epsilon)}))$. Then, for each $t \in D_{N(\epsilon)}/D_N$,  we use the recursion in Lemma \ref{gb} to calculate the value of fBM at $t$ after taking condition for $2^N$ steps whose complexity is approximately $O(2^{3N})$. We repeat this procedure $O(2^{N(\epsilon)})$ times. Above all, the total computational complexity of refine the dyadic approximation from level $N$ to $N(\epsilon)$ is $ O(2^{N(\epsilon)}\log(2^{N(\epsilon)}))$.

\section{$\epsilon$-Strong simulation of stochastic differential equation driven by fBM} \label{sec:sde}
In this section, we extend the $\epsilon$-strong simulation algorithm for fBM to stochastic differential equations (SDEs) driven by fBM with $H>1/2$ via rough path theory.
Consider a $d$-dimensional SDE 
\# \label{fsde}
\ud \bY(t)=\bmu(\bY(t)) \ud t+\bsigma(\bY(t))\ud \bB^H(t),\ \bY(0)=\by(0),
\#  
where $\bB^H(t) $ is a $d^{\prime}$-dimensional fBM (each component is an independent standard one-dimensional fBM),  $\bmu(\cdot): \RR^{d}\to \RR^{d}$ and $\bsigma(\cdot): \RR^{d} \to \RR^{d \times d^{\prime}}$ are driven vector fields, corresponding to the drift and the volatility, respectively. For any fixed $\epsilon>0$, our goal is to construct a probability space, supporting both $\bY(t)$ and a fully simulatable path $\hat\bY_{\epsilon}(t)$ such that
\$
\sup_{t\in [0,1]} |\bY(t) - \hat \bY_{\epsilon}(t)| \le \epsilon, \quad \text{a.s.}
\$ 

The construction of $\hat{\bY}_{\epsilon}(t)$ builds on our ability to estimate the driving fBM and its corresponding $\alpha$-H\"older norm. 
Particularly, for any $1/2<\alpha<H$, the sample path of fBM is $\alpha$-H\"older continuous almost surely. In this case, by the rough path theory, the solution of SDE \eqref{fsde} can be defined path by path and the mapping from $\bB^H$ to $\bY$ is continuous under the $\alpha$-H\"older norm \citep{lyons1998differential} \footnote{For $H\le1/2$, to define the corresponding SDE in a path by path sense, high order iterated integrals of $\bB^H$ need to be specified.}. 
Therefore, if we can control the error of the simulated driving signals, by continuous mapping type of argument, we can also control the error of the simulated SDEs.

In what follows, we shall first lay out the main idea of our algorithmic development. We then present the theoretical foundation and derivation in Section \ref{sec:rough_path}.
The construction of the approximated solution is based on simple Euler scheme. For dyadic discretization $D_n$,
we define $\bY_n(t^n_{k})$, $k=1,2,\dots, 2^n$, via the recursion
\$
\bY_n(t^n_{0})&=0,\\
\bY_n(t^n_{k+1})&=\bY^{(n)}(t^n_{k})+\bmu\bigl(\bY_n(t^n_{k})\bigr)\cdot \Delta_n+\bsigma\bigl(\bY_n(t^n_{k})\bigr)\cdot (\bB^H(t^n_{k+1})-\bB^H(t^n_k)).
\$

We then construct the whole path $\bY_n(t)$ via piecewise-constant interpolation, 
\$
\bY_n(t)=\bY_n(t^n_{k}) ,\ t\in [t^n_{k},t^n_{k+1}).
\$ 
The challenge here is to choose an appropriate discretization level $N_Y(\epsilon)$, such that
\$
 \|  \bY_{N_Y(\epsilon)}-\bY\|_{\infty} \le \epsilon, \quad \text{a.s.}
\$ 
In Theorem \ref{thm0} below, we establish that 
\begin{equation} \label{eq:thm0}
\|\bY_n-\bY  \|_{\infty} \le G\cdot \Delta_n^{2\alpha-1},
\end{equation}
where $G$ can be characterized explicitly, and it depends on the $\alpha$-H\"older norm of $\bB^H$. 
Therefore, we shall first use the $\epsilon$-strong simulation algorithm we developed in Section \ref{sec:alg} to find an upper bound for 
$||\bB^H||_{\alpha}$. We can then upper bound $G$, and set 
$$N_Y(\epsilon)=\lceil \log_2(\epsilon^{-1}G/(2\alpha-1))\rceil.$$
The actual algorithm is summarized in Algorithm \ref{tem}. 

\begin{algorithm} 
\caption{$\epsilon$-Strong Simulation of SDE Driven by fBM (SSDE)}
\begin{algorithmic}[1]
\STATE \textbf{Input:} Accuracy $\epsilon$,  vector fields $\bmu, \bsigma$,, H\"older norm order $\alpha$, Hurst index $H$, record-breaker parameters $\nu,\rho,\delta$. 
\STATE \textbf{Estimate $\alpha$-H\"older norm :}  
\STATE  \qquad    \textbf{For $i$ in $[d^{\prime}]$:}
\STATE  \qquad \qquad    Call SLRB (Algorithm 2): set $[\hat{B}^i, N^i]\leftarrow \text{SLRB}(H,\nu,\rho,\delta)$. 
\STATE  \qquad \qquad   Calculate the upper bound for the $\alpha$-H\"older norm:
$$\hat{C}^i_{\alpha}\leftarrow \|\hat{B}^i\|_{\alpha}+\frac{\rho2^{2-\alpha}\cdot 2^{-(H-\alpha-\delta)(N^i+1)}}{1-2^{-(H-\alpha-\delta)}}.$$
\vspace{-5mm}
\STATE  \qquad Set $\hat{C}_{\alpha}=\max_{i \in [d^{\prime}]}\hat{C}^i_{\alpha} \wedge 1 $ . 
\STATE \textbf{Determine the truncation level:} 
\STATE  \qquad Calculate $G$ using $\hat{C}_{\alpha}$, then set $N_{Y}\leftarrow \lceil \log_2(\epsilon^{-1}G/(2\alpha-1))\rceil$.  
\STATE \textbf{Refine the approximation of fBM:} 
\STATE  \qquad  \textbf{For $i$ in $[d^{\prime}]$:}
\STATE \qquad \qquad  \textbf{If $N^i<N_Y$:} refine dyadic approximation $\hat{B}^i$ until level $N_Y$ via acceptance-rejection. 
\STATE \textbf{Solve the SDE by Euler scheme:} 
\STATE  \qquad    $\bY_{N_Y}(t^{N_Y}_{0})\leftarrow 0$ 
\STATE  \qquad   \textbf{For $k$ in $[2^{N_Y}]$:} 
\STATE \qquad \qquad  $\bY_{N_Y}(t^{N_Y}_{k+1})\leftarrow \bY_{N_Y}(t^{N_Y}_{k})+\bmu\bigl(\bY_{N_Y}(t^{N_Y}_{k})\bigr)\Delta_{N_Y}+\bsigma\bigl(\bY_{N_Y}(t^{N_Y}_{k})\bigr) (\hat{\bB}^H(t^{N_Y}_{k+1})-\hat{\bB}^H(t^{N_Y}_k)). $
\STATE \textbf{Output:} Return $\bY_{N_Y}(t) $, the piecewise constant interpolation of $\{\bY_{N_Y}(t^{N_Y}_{k}) \}_{k=0,\cdots,2^{N_Y}}$. 
\end{algorithmic}\label{tem} 
\end{algorithm}

\subsection{Rate of convergence of Euler scheme} \label{sec:rough_path}
In this section, we present the details of the rate of convergence of Euler scheme. This result is an extension of \cite{lejay2010controlled}. Particularly, we explicitly characterize the constant in front of $\Delta_{n}^{2\alpha-1}$ for the Euler scheme at dyadic discretization $D_n$. This is important for our algorithmic development, as we need to know $G$ in \eqref{eq:thm0} to find the required discretization level $N_Y$.      
           
We first introduce a few notations to simplify the exposition. 
Consider the following multidimensional ordinary differential equation (ODE) system $y$ driven by vector-valued signal $x$ 
\# \label{ode}
\left[
 \begin{matrix}
  \ud y^{1}(t) \\
  \ud y^{2}(t) \\
   \vdots \\
   \ud y^{d}(t)\\
  \end{matrix}
  \right]=
  \left[
  \begin{matrix}
  f_{11}(\by(t))& f_{12}(\by(t))& \cdots & f_{1h}(\by(t)) \\
   f_{21}(\by(t))& f_{22}(\by(t))& \cdots & f_{2h}(\by(t)) \\
   \vdots&  \vdots&  & \vdots \\
   f_{d1}(\by(t))& f_{d2}(\by(t))& \cdots & f_{dh}(\by(t)) \\
     \end{matrix}
  \right]\cdot \left[
 \begin{matrix}
  \ud x^{1}(t) \\
  \ud x^{2}(t) \\
   \vdots \\
   \ud x^{h}(t)\\
  \end{matrix}
  \right],\quad
   \left[
   \begin{matrix}
  y^{1}(0) \\
  y^{2}(0) \\
   \vdots \\
   y^{d}(0)\\
  \end{matrix}
  \right]=\left[
 \begin{matrix}
  y^{1}_0 \\
  y^{2}_0 \\
   \vdots \\
   y^{d}_0\\
  \end{matrix}
  \right],
\#
where $\by(t)=[y^{1}(t),\cdots,y^{d}(t)]^{\top}$ and $ \bx(t)=[x^{1}(t),\cdots,x^{h}(t)]^{\top} $. It is easy to see that SDE \eqref{fsde} can be written in form of \eqref{ode}. We only need to set 
\[
\bbf(\cdot)=\left[\begin{array}{c}
\bmu(\cdot)\\
\bsigma(\cdot) 
\end{array}\right]
\mbox{ and }
\ud \bx(t)=\left[\begin{array}{c}
\ud t\\
\ud \bB^H(t)
\end{array}\right],
\] 
and then dimension $h=d'+1$. From now on, all of our derivation will be based on the notations in \eqref{ode}. 
In the following of this section, we use $ \bbf$ to denote the matrix $[f_{ij}]_{d\times h}$. 
Note that we will use bold letters to denote matrices or vectors. 
Furthermore, we assume that $ \bx $ is $\alpha$-H\"older continuous with $1/2<\alpha\le1,$ which is to say, $|\bx(t)-\bx(s)|\le C_{\alpha}|s-t|^{\alpha}$ for some $C_{\alpha}\in (0,\infty)$.

The solution to equation \eqref{ode} is formally defined in terms of the Young integral. In what follows,
we shall start by a brief introduction to Young integral and then quantify error of the Euler approximation scheme.
\begin{definition} \label{sol}
We say that $\by(t)$ is a solution of equation \eqref{ode}  if for all $t \in [0,1]$,
\$
\by(t)=\by_0+\int_0^t \bbf\bigl(\by(s)\bigr)\ud \bx(s),
\$
where the integration is interpreted as Young integral.
\end{definition}
Young integral is an extension of Riemann-Stieltjes integral for paths with finite $p$-variation, $1<p<2$, but potentially infinite total variation. 
Recall that we call a continuous path $u(t)$ defined on $[0,1]$ has finite $p$-variation if 
\$
\sup_{\Pi}\sum_{t_i \in \Pi} | u(t_{i+1})-u(t_{i}) |^p <\infty,
\$
where $\Pi=\{t_i\}_{i\ge 0}$ is a set of finite partitions of $[0,1]$.
Then we have
 \begin{definition}(Young integral) Let $u(t)$ and $v(t)$ be continuous paths on $[0,1]$ with finite $p$-variation and $q$-variation respectively, such that 
 \$
 \frac{1}{p}+\frac{1}{q}>1.
 \$ 
Then the limit of Riemann sum as the mesh of the partition $|\Pi|$ goes to zero
\$
\lim_{|\Pi| \to 0}\sum_{t_i \in \Pi} v(t_i)\cdot(  u(t_{i+1})-u(t_{i})),
\$  
exists and is unique. We use $\int_0^1 v(s)\ud u(s),$ to denote this limit and call it the Young integral of $v(s)$ with respect to $u(s)$. 
 \end{definition}


A special case of finite $p$-variation path is $\alpha$-H\"older continuous path. Note that if $u$ is $\alpha$-H\"older continuous, i.e. $| u(s)-u(t)| \le  \|u\|_{\alpha} \cdot |s-t|^{\alpha}$ for some $ \|u\|_{\alpha} \in(0,\infty)$, then $u$ has finite $p$-variation with $p=1/\alpha$. From now on, we use $H_{\alpha,[0,1]}(u)$ to denote the $\alpha$-H\"older norm of $u$ on interval $[0,1]$. In this case, we have the following Young-L\'oeve estimate (\cite{lejay2010controlled}).
\begin{theorem} (Young-L\'oeve estimate.) Assume that the integrator $u$ and the integrand $v$ are H\"older continuous of  exponents $\alpha$ and $\beta$ with $ \alpha+\beta>1$, respectively. Then for any $s,t \in [0,1]$,  
\$
\Bigl| \int _s^t v(r)\ud u(r)- v(s)(u(t)-u(s))\Bigr| \le K(\alpha+\beta)\cdot H_{\alpha,[0,1]}(u)\cdot H_{\beta,[0,1]}(v)\cdot |t-s|^{\alpha+\beta},
\$
where $ K(\alpha+\beta)=1+\sum_{n\ge 1} n^{-(\alpha+\beta)} $. Moreover, for any finite partition $\Pi_{s,t}$ of $[s,t]$,
\$
\Bigl|\sum_{t_i \in \Pi_{s,t}} v(t_i)(u(t_{i+1})-u(t_i))- v(s)(u(t)-u(s))\Bigr| \le K(\alpha+\beta)\cdot H_{\alpha,[0,1]}(u)\cdot H_{\beta,[0,1]}(v)\cdot |t-s|^{\alpha+\beta},
\$
\end{theorem}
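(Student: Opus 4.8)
The plan is to prove the finite-partition (Riemann-sum) inequality first, since the integral estimate is then immediate: the right-hand side $K(\alpha+\beta)\cdot H_{\alpha,[0,1]}(u)\cdot H_{\beta,[0,1]}(v)\cdot|t-s|^{\alpha+\beta}$ does not depend on the partition, so once every Riemann sum over a partition of $[s,t]$ obeys the bound, I can let the mesh $|\Pi|\to0$ and invoke the already-established existence and uniqueness of the Young integral to pass the estimate to the limit $\int_s^t v(r)\,\ud u(r)$. Hence it suffices to establish the ``Moreover'' inequality for an arbitrary finite partition $\Pi_{s,t}$.

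The workhorse is a coarsening (point-removal) argument. Fix a partition $\Pi=\{s=r_0<r_1<\cdots<r_n=t\}$ and write the Riemann sum as $S(\Pi)=\sum_{i} v(r_i)(u(r_{i+1})-u(r_i))$. The elementary identity is that deleting one interior node $r_j$ and merging $[r_{j-1},r_j]$ with $[r_j,r_{j+1}]$ changes the sum by exactly $(v(r_j)-v(r_{j-1}))(u(r_{j+1})-u(r_j))$; by H\"older continuity its modulus is at most $H_{\alpha,[0,1]}(u)\cdot H_{\beta,[0,1]}(v)\cdot(r_j-r_{j-1})^{\beta}(r_{j+1}-r_j)^{\alpha}$, and therefore at most $H_{\alpha,[0,1]}(u)\cdot H_{\beta,[0,1]}(v)\cdot(r_{j+1}-r_{j-1})^{\alpha+\beta}$. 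To keep each such increment small I choose the node to delete by an averaging argument: since $\sum_{j=1}^{n-1}(r_{j+1}-r_{j-1})=(t-r_1)+(r_{n-1}-s)\le 2(t-s)$, there is always an interior node whose merged interval has length at most $2(t-s)/(n-1)$.

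I then iterate: starting from $\Pi$ with $n$ subintervals I repeatedly delete a cheapest node, reducing the number of subintervals by one at each step, until only the single interval $[s,t]$ survives, at which point the remaining sum equals the germ $v(s)(u(t)-u(s))$ exactly. Summing the per-step increments expresses $S(\Pi)-v(s)(u(t)-u(s))$ as a telescoped series whose $m$-th term is controlled by a quantity of order $m^{-(\alpha+\beta)}$; because $\alpha+\beta>1$ this series converges, and its total is exactly what produces the finite constant $K(\alpha+\beta)=1+\sum_{n\ge1}n^{-(\alpha+\beta)}$. I expect the main obstacle to be this constant bookkeeping: the crude averaging above already yields a \emph{finite} but suboptimal constant (of order $2^{\alpha+\beta}\sum_{n\ge1}n^{-(\alpha+\beta)}$), so extracting the sharp $K(\alpha+\beta)$ requires a more careful node-selection rule, in particular isolating the final merge, which contributes the leading term $1$ with no loss, and charging the intermediate merges against $\sum_{n\ge1}n^{-(\alpha+\beta)}$ without the spurious factor. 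Once the partition bound is secured with the stated constant, the integral estimate follows by the limiting argument of the first paragraph.
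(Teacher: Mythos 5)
A preliminary remark: the paper itself does not prove this theorem --- it quotes the Young--L\'oeve estimate from Lejay (2010) without proof --- so your attempt has to be judged against the standard argument rather than against anything in the text. Your architecture is the right one: it is exactly Young's classical coarsening proof. Proving the partition (Riemann-sum) bound first and then letting the mesh go to zero is legitimate here, since the paper's definition of the Young integral already grants existence of that limit; and your point-removal identity, together with the bound $|(v(r_j)-v(r_{j-1}))(u(r_{j+1})-u(r_j))|\le H_{\beta,[0,1]}(v)\,H_{\alpha,[0,1]}(u)\,(r_j-r_{j-1})^{\beta}(r_{j+1}-r_j)^{\alpha}$, is correct.

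The genuine gap is the node-selection step, which you have correctly located but not closed. With your averaging rule (pick $j$ with $r_{j+1}-r_{j-1}\le 2(t-s)/(n-1)$), the telescoped sum yields the constant $2^{\alpha+\beta}\sum_{k\ge1}k^{-(\alpha+\beta)}$, which is strictly larger than the stated $K(\alpha+\beta)=1+\sum_{k\ge1}k^{-(\alpha+\beta)}$; and ``isolating the final merge'' cannot repair this, because the spurious factor $2^{\alpha+\beta}$ sits on \emph{every} intermediate merge, not on the last one. The missing concrete idea is a sharper selection lemma, classically obtained from H\"older's inequality: if the current partition has $m\ge 2$ subintervals of lengths $\ell_1,\dots,\ell_m$, then applying H\"older with exponents $(\alpha+\beta)/\beta$ and $(\alpha+\beta)/\alpha$ gives
\[
\sum_{j=1}^{m-1}\ell_j^{\beta/(\alpha+\beta)}\,\ell_{j+1}^{\alpha/(\alpha+\beta)}
\;\le\;
\Bigl(\sum_{j=1}^{m-1}\ell_j\Bigr)^{\beta/(\alpha+\beta)}
\Bigl(\sum_{j=1}^{m-1}\ell_{j+1}\Bigr)^{\alpha/(\alpha+\beta)}
\;\le\; t-s,
\]
so some interior node $r_j$ satisfies $\ell_j^{\beta}\ell_{j+1}^{\alpha}\le\bigl((t-s)/(m-1)\bigr)^{\alpha+\beta}$ and can be deleted at cost at most $H_{\alpha,[0,1]}(u)\,H_{\beta,[0,1]}(v)\,\bigl((t-s)/(m-1)\bigr)^{\alpha+\beta}$ --- with no factor $2^{\alpha+\beta}$ at all. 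Telescoping from $n$ subintervals down to one then bounds the total discrepancy by $H_{\alpha,[0,1]}(u)\,H_{\beta,[0,1]}(v)\,(t-s)^{\alpha+\beta}\sum_{k=1}^{n-1}k^{-(\alpha+\beta)}$, which is at most $K(\alpha+\beta)\,H_{\alpha,[0,1]}(u)\,H_{\beta,[0,1]}(v)\,|t-s|^{\alpha+\beta}$ (in fact it gives the slightly better constant $\sum_{k\ge1}k^{-(\alpha+\beta)}$). With this lemma inserted your proof is complete; as written, it only establishes the inequality with a constant exceeding the one in the statement.
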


We are ready to introduce the Euler scheme. For  dyadic discretization points
$D_n=\{t^n_k\}_{k=0,\cdots,2^{n}}$, we define 
\#  \label{re1}
\by_n(t^n_{k+1})=\by_n(t^n_{k})+\bbf\bigl(\by_n(t^n_{k})\bigr)\cdot (\bx(t^n_{k+1})-\bx(t^n_k)),\ k=0,1,\cdots,2^n-1.
\#  
Based on the values $\{\by_n(t^n_{k})\}_{k=0,\cdots,2^n}$, we further construct a continuous path over $[0,1]$ via piecewise constant interpolation, i.e.
\# \label{li}
\by_n(t)=\by_n(t^n_{k}) ,\ t\in [t^n_{k},t^n_{k+1}).
\#   
Then we call $\by_n(t)$ an approximated solution of level $n$ via Euler scheme. 
Our goal is to control the uniform norm between the approximated solution $\by_n(t) $ and exact solution $\by(t)$.

To ensure the existence of solution of ODE \eqref{ode} and control the approximation error, we impose the following smoothness condition on the vector field $\bbf$. 
\begin{assumption}\label{ass0}
We assume that $\bbf(\cdot) \in C^2(\RR^d)$ and $\max\{ |\bbf|, |\nabla \bbf|, |\nabla^2 \bbf| \}<\infty, $ where $| \bbf| =\max_{ij} | f_{ij}| ,$ $|\nabla \bbf| =\max_{ij} |\nabla f_{ij}| $, and  $|\nabla^2 \bbf| =\max_{ij} |\nabla^2 f_{ij}| .$
\end{assumption}

We also define the following constants. Let
\$
G^*_1&=2h \cdot \lceil (2dhC_{\alpha}K(2\alpha) |\nabla \bbf|)^{1/\alpha} \rceil^{1-\alpha}\cdot |\bbf|\cdot C_{\alpha} ,\qquad 
G^*_2= dh\cdot K(2\alpha)\cdot |\nabla \bbf| \cdot  C_{\alpha}\cdot G^*_1,\\
L &=4/(1-2^{1-2\alpha})\cdot (hC_{\alpha})^{2} |\nabla\bbf |\cdot  |\bbf |,\qquad \omega=(h|\bbf |C_{\alpha}/L)^{1/\alpha},\\
G_1&=\bigl(L+h |\bbf |C_{\alpha}\bigr)\cdot (1+ \omega^{-1}) ,\qquad 
G_2= \max\Bigl\{  (2\omega^{-\alpha}+\omega^{-1-\alpha})\cdot \big(L+h|\bbf|C_{\alpha} \bigr), L\Bigr\}.
\$
 In addition, we define a sequence of constants $\{ \Gamma_{k}\}$ and $\{ \Upsilon_{k}\}$ via recursion 
\begin{eqnarray} \label{eq:Upsilon}
&&\Gamma_{1}=2G_2^*,\ \Upsilon_{1}= (4\zeta)^{-1}\cdot \Gamma_{1},  \nonumber\\
&&\Gamma_k=2(G_2^*+\upsilon\cdot  \Upsilon_{k-1}) ,\ \Upsilon_{k}=(4\zeta)^{-1}\cdot \Gamma_{k}+\Upsilon_{k-1} ,\ k\ge2, 
\end{eqnarray}
where 
\$
\zeta&=h\cdot K(2\alpha)\cdot C_{\alpha}\cdot \bigl(  d\cdot |\nabla \bbf|+d^2 \cdot |\nabla^2 \bbf|\cdot ( G_1^* +G_1 ) \bigr),\\
\upsilon&= C_{\alpha}\cdot \bigl(d^2h\cdot K(2\alpha)\cdot |\nabla^2\bbf|\cdot( G_1^* +G_1 )+d\cdot |\nabla \bbf| \bigr).
\$  

Under Assumption \ref{ass0}, the general theory of Young integral equation ensures the existence and uniqueness for the solution of equation \eqref{ode}. The following theorem characterizes the rate of convergence of the Euler scheme under the uniform norm and is the main result of this section. 
\begin{theorem} \label{thm0}Under Assumption \ref{ass0}, 
\$
\|\by_n(t )-\by(t )\|_{\infty} \le G\cdot  \Delta_n^{2\alpha-1}:=\bigl( \Upsilon_{\lceil (4\zeta)^{1/\alpha} \rceil}+G^*_1 \bigr) \cdot  \Delta_n^{2\alpha-1}.
\$ 
 \end{theorem}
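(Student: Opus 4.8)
The plan is to control the error in three steps: a priori bounds on the exact solution $\by$, a priori bounds on the Euler polygon $\by_n$ that are \emph{uniform} in $n$, and a chained error estimate over a fixed ($n$-independent) partition of $[0,1]$. For the exact solution I would first apply the Young--L\'oeve estimate together with a contraction argument on subintervals short enough for the local solution map to be contractive; patching the resulting $N_0=\lceil(2dhC_\alpha K(2\alpha)|\nabla\bbf|)^{1/\alpha}\rceil$ pieces and paying the usual factor $N_0^{1-\alpha}$ when passing from local to global H\"older norms yields $\|\by\|_\alpha\le G^*_1$. Feeding this into the Young--L\'oeve bound for the remainder $\int_s^t[\bbf(\by(r))-\bbf(\by(s))]\,\ud\bx(r)$ gives the second-order estimate $|\by(t)-\by(s)-\bbf(\by(s))(\bx(t)-\bx(s))|\le G^*_2|t-s|^{2\alpha}$ with $G^*_2=dh\,K(2\alpha)|\nabla\bbf|\,C_\alpha\,G^*_1$.

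The second step derives the analogous bounds $\|\by_n\|_\alpha\le G_1$ and remainder constant $G_2$, uniformly in $n$. This is where the main difficulty lies: a naive step-by-step Gronwall estimate amplifies the per-step errors by a factor $\exp(c\,\Delta_n^{\alpha-1})$ for some constant $c$, and since $\Delta_n^{\alpha-1}=2^{n(1-\alpha)}\to\infty$ this bound is useless. To defeat the blow-up I would run a dyadic multiscale argument, decomposing the Euler remainder across refinement levels; because $2\alpha>1$ the geometric series $\sum_{j\ge0}2^{(1-2\alpha)j}=(1-2^{1-2\alpha})^{-1}$ converges, which is exactly the factor in $L$, and $\omega=(h|\bbf|C_\alpha/L)^{1/\alpha}$ is the length scale on which the local self-referential inequality closes. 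I expect this uniform control of the discrete scheme, and its analogue in the propagation step, to be the genuine obstacle, since it is here that the elementary exponential bound must be circumvented.

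For the error propagation I would partition $[0,1]$ into $M=\lceil(4\zeta)^{1/\alpha}\rceil$ equal subintervals $I_k$ of length $\eta=1/M$, chosen so that $\zeta\eta^\alpha\le1/4$, where $\zeta$ and $\upsilon$ are the effective Lipschitz constants built from the bounds of the first two steps. Writing the grid-point error increment as $e_n(t)-e_n(s)=[\bbf(\by(s))-\bbf(\by_n(s))](\bx(t)-\bx(s))+\Phi(s,t)$, the linear term sums to a Young integral and therefore \emph{contracts} rather than accumulating, while the defect $\Phi$ is controlled by Young--L\'oeve using the estimates already obtained; the sum of the $O(\eta 2^n)$ intrinsic one-step defects, each $O(\Delta_n^{2\alpha})$, is $O(\Delta_n^{2\alpha-1})$ since $2^n\Delta_n^{2\alpha}=\Delta_n^{2\alpha-1}$. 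Solving the local inequality on each $I_k$ (legitimate since $\zeta\eta^\alpha\le1/4$, with the factor $2$ absorbing $(1-1/4)^{-1}$) gives a local coefficient $\Gamma_k=2(G^*_2+\upsilon\,\Upsilon_{k-1})$, in which $G^*_2$ is the intrinsic source and $\upsilon\,\Upsilon_{k-1}$ the amplification of the incoming error $\Upsilon_{k-1}\Delta_n^{2\alpha-1}$, and chaining across subintervals gives $\Upsilon_k=(4\zeta)^{-1}\Gamma_k+\Upsilon_{k-1}$. Since $M$ is a constant independent of $n$, only finitely many chaining steps occur and no blow-up appears, so the grid error at time $1$ is at most $\Upsilon_M\Delta_n^{2\alpha-1}$. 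Finally I would add the within-cell interpolation error, which is at most $\|\by\|_\alpha\Delta_n^\alpha\le G^*_1\Delta_n^{2\alpha-1}$ because $\alpha\le1$, to obtain $\|\by_n-\by\|_\infty\le(\Upsilon_M+G^*_1)\Delta_n^{2\alpha-1}=G\,\Delta_n^{2\alpha-1}$.
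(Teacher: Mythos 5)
Your proposal is correct and follows essentially the same route as the paper: the same a priori lemmas (H\"older and second-order bounds $G_1^*,G_2^*$ for the exact solution, the $n$-uniform bounds $G_1,G_2$ for the Euler polygon via the binary-splitting argument that produces $L$ and $\omega$), the same self-referential restricted-H\"older inequality with coefficients $\zeta,\upsilon$, the same partition into $\lceil(4\zeta)^{1/\alpha}\rceil$ blocks with the recursion $\Gamma_k=2(G_2^*+\upsilon\Upsilon_{k-1})$, $\Upsilon_k=(4\zeta)^{-1}\Gamma_k+\Upsilon_{k-1}$, and the same final piecewise-constant interpolation step absorbing $G_1^*\Delta_n^{\alpha}$ into $\Delta_n^{2\alpha-1}$. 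No substantive differences to report.
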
 
 
\begin{remark}
By the definition of $G$, it is easy to see that $G$ is an increasing function with respect to $C_{\alpha}$, the $\alpha$-H\"older norm of $\bx$.
\end{remark}

\begin{proof}
The proof of Theorem \ref{thm0} relies on the several lemmas. The first one establishes the existence of the solution to equation \eqref{ode} and its properties. 
\begin{lemma} \label{lem0.0} 
Under Assumption \ref{ass0}, the solution of equation \eqref{ode} $\by(t)  $ exists. We also have the following estimates. For all $0\le s<t\le 1$,  
\$
&|\by(s)-\by(t)|\le  G^*_1 \cdot |s-t|^{\alpha},\\ 
&\bigl |\by(t)-\by(s)-\bbf(\by(s))\cdot (\bx(t)-\bx(s))\bigl |\le G^*_2 \cdot  |s-t|^{2\alpha}.
\$
\end{lemma}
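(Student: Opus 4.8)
The plan is to extract existence and both estimates from a single local Picard fixed-point argument, and then globalize; the structural fact I will exploit is that the boundedness of $\bbf$ makes the local H\"older bound on $\by$ independent of its starting value, which is exactly what lets a single local constant be propagated across subintervals. Write the equation in integral form $\by(t)=\by_0+\int_0^t\bbf(\by(r))\,\ud\bx(r)$ (Young integral, Definition \ref{sol}). Set $m=\lceil(2dhC_\alpha K(2\alpha)|\nabla\bbf|)^{1/\alpha}\rceil$ and $\eta=1/m$, and partition $[0,1]$ into $m$ equal closed subintervals of length $\eta$. On a generic subinterval $[a,b]$ I would work in the complete metric space $\mathcal{X}=\{\tilde\by\in C([a,b];\RR^d):\tilde\by(a)=\by(a),\ H_{\alpha,[a,b]}(\tilde\by)\le R\}$ with $R=2h|\bbf|C_\alpha$, under the $\alpha$-H\"older norm, and define $\Phi(\tilde\by)(t)=\by(a)+\int_a^t\bbf(\tilde\by(r))\,\ud\bx(r)$. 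Applying the Young-L\'oeve estimate componentwise to the increment $\int_s^t\bbf(\tilde\by)\,\ud\bx$ and summing over the $h$ driving signals gives
\[
|\Phi(\tilde\by)(t)-\Phi(\tilde\by)(s)|\le h|\bbf|C_\alpha|t-s|^\alpha+dhK(2\alpha)C_\alpha|\nabla\bbf|\,H_{\alpha}(\tilde\by)\,|t-s|^{2\alpha}.
\]
Since $|t-s|^\alpha\le\eta^\alpha\le(2dhC_\alpha K(2\alpha)|\nabla\bbf|)^{-1}$ on $[a,b]$, this yields $H_{\alpha,[a,b]}(\Phi\tilde\by)\le h|\bbf|C_\alpha+\tfrac12 H_{\alpha}(\tilde\by)\le R$, so $\Phi$ maps $\mathcal{X}$ into itself.

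Next I would establish that $\Phi$ is a contraction on $\mathcal{X}$: the difference $\Phi(\tilde\by)-\Phi(\tilde\by')$ is the Young integral of $\bbf(\tilde\by)-\bbf(\tilde\by')$ against $\bx$, and a first-order Taylor expansion controls both its sup-norm (through $|\nabla\bbf|$) and its $\alpha$-H\"older norm (through $|\nabla\bbf|$ and $|\nabla^2\bbf|$), so that another application of Young-L\'oeve together with the smallness of $\eta^\alpha$ produces a contraction constant strictly below one. Banach's fixed point theorem then yields a unique local solution on $[a,b]$ whose H\"older constant is at most $R$, and, crucially, this bound does not depend on $\by(a)$. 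Patching the $m$ local solutions by matching endpoint values gives the global solution $\by$ on $[0,1]$, H\"older with constant $R$ on each of the $m$ subintervals. To convert this into the stated global bound, take arbitrary $0\le s<t\le 1$, cut $[s,t]$ at the at most $m-1$ interior grid points $k/m$ into $P\le m$ pieces each lying in a single subinterval, and combine the local bounds via concavity of $x\mapsto x^\alpha$: $\sum_i\ell_i^\alpha\le P^{1-\alpha}(\sum_i\ell_i)^\alpha\le m^{1-\alpha}|t-s|^\alpha$. This gives $|\by(t)-\by(s)|\le Rm^{1-\alpha}|t-s|^\alpha=G^*_1|t-s|^\alpha$, the first estimate.

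The second estimate then follows from a single global application of the Young-L\'oeve estimate to $\by(t)-\by(s)=\int_s^t\bbf(\by(r))\,\ud\bx(r)$: using $H_{\alpha,[0,1]}(\bx)\le C_\alpha$ and $H_{\alpha,[0,1]}(\bbf(\by))\le d|\nabla\bbf|\,H_{\alpha,[0,1]}(\by)\le d|\nabla\bbf|G^*_1$, and summing over the $h$ columns, the remainder is bounded by $dhK(2\alpha)|\nabla\bbf|C_\alpha G^*_1|t-s|^{2\alpha}=G^*_2|t-s|^{2\alpha}$. The main obstacle is the contraction step: unlike the self-map estimate, bounding $H_\alpha(\bbf(\tilde\by)-\bbf(\tilde\by'))$ requires a careful mean-value/Taylor argument mixing $\tilde\by$ and $\tilde\by'$ and invoking $|\nabla^2\bbf|$, and one must verify that the dimension bookkeeping (the factor $d$ arising from gradients acting in the max-norm and the factor $h$ from summing over driving signals) stays consistent so that the contraction holds at the same scale $\eta$ fixed above; the remaining H\"older-norm manipulations are routine.
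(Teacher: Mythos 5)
Your two estimates are obtained in essentially the same way as the paper's proof: the same subinterval scale $\omega=(2dhC_\alpha K(2\alpha)|\nabla\bbf|)^{-1/\alpha}$, the same Young--L\'oeve bound yielding the local H\"older constant $2h|\bbf|C_\alpha$, the same Jensen/concavity patching that produces the factor $\lceil\omega^{-1}\rceil^{1-\alpha}$ in $G_1^*$, and the same single global Young--L\'oeve application (with $H_{\alpha,[0,1]}(\bbf(\by))\le d|\nabla\bbf|\,H_{\alpha,[0,1]}(\by)$) giving $G_2^*=dhK(2\alpha)|\nabla\bbf|C_\alpha G_1^*$. Where you genuinely differ is existence: the paper never proves it --- it invokes the general theory of Young integral equations (stated just before Theorem \ref{thm0}), and its appendix proof only derives a priori bounds for a solution assumed to exist, via the self-bounding inequality $H_{\alpha,[T_k,T_{k+1}]}(\by)\le h|\bbf|C_\alpha+\tfrac12 H_{\alpha,[T_k,T_{k+1}]}(\by)$. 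You instead construct the solution by Banach fixed point in the ball $\{H_\alpha\le R\}$, $R=2h|\bbf|C_\alpha$, which makes the lemma self-contained; that is a real gain.

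The one step you should not wave away is the contraction. Your step size $\eta$ was calibrated so that $dhK(2\alpha)C_\alpha|\nabla\bbf|\eta^\alpha\le\tfrac12$, which is exactly what the invariance (self-map) estimate needs; but the contraction constant for $\Phi(\tilde\by)-\Phi(\tilde\by')$ is governed by $H_\alpha\bigl(\bbf(\tilde\by)-\bbf(\tilde\by')\bigr)$, and --- as in the paper's Lemma \ref{lem0.2} --- this brings in a term proportional to $|\nabla^2\bbf|\cdot R$ in addition to $|\nabla\bbf|$. So the contraction may fail at the scale $\eta$ you fixed and require a strictly smaller step $\eta'$; if you shrink the step, your direct route to the first estimate degrades, since patching over $1/\eta'>m$ subintervals yields a constant larger than $G_1^*$. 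The repair is to decouple the two roles of the partition: run the fixed-point construction at whatever small scale $\eta'$ the contraction demands (this only needs to deliver existence of an $\alpha$-H\"older solution), and then, once the solution exists, rerun the self-bounding inequality at the original scale $\omega$ --- which is precisely the paper's argument --- to recover $H_{\alpha,[T_k,T_{k+1}]}(\by)\le 2h|\bbf|C_\alpha$ on intervals of length $\omega$, and hence the stated $G_1^*$ and $G_2^*$. With that one adjustment your proof is complete, and in fact stronger than the paper's, since it also establishes the existence claim that the paper delegates to outside references.
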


The next lemma is a counterpart of Lemma \ref{lem0.0}. It establishes similar properties for the solutions obtain by the Euler scheme.  

\begin{lemma} \label{lem0.1}
For all the dyadic discretization time points $t^n_{j},t^n_{r} \in [0,1]$, we have the estimates
\$
&|\by_n({t^n_j})-\by^{(n)}({t^n_r})|\le  G_1 \cdot \bigl|t^n_{j}-t^n_{r}\bigr|^{\alpha}\\ 
 &| \by_n({t^n_j})-\by^{(n)}({t^n_r})-\bbf(\by_n({t^n_r}))\cdot (\bx(t^n_j)-\bx(t^n_r))| \le G_2 \cdot   \bigr|t^n_{j}-t^n_{r}\bigr|^{2\alpha}.
\$    
\end{lemma}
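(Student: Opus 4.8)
The plan is to establish Lemma \ref{lem0.1} as the discrete counterpart of Lemma \ref{lem0.0}, replacing the continuous Young/Young--L\'oeve estimates by telescoped, grid-based versions. Throughout, write $\Phi_{s,t}:=\by_n(t)-\by_n(s)-\bbf(\by_n(s))\bigl(\bx(t)-\bx(s)\bigr)$ for the two-point remainder of the Euler iterate at dyadic times $s,t\in D_n$. The two assertions of the lemma are then exactly a first-order H\"older bound $|\by_n(t)-\by_n(s)|\le G_1|t-s|^{\alpha}$ and a second-order (remainder) bound $|\Phi_{s,t}|\le G_2|t-s|^{2\alpha}$; as in the continuous case these are coupled through the trivial identity $\by_n(t)-\by_n(s)=\bbf(\by_n(s))(\bx(t)-\bx(s))+\Phi_{s,t}$, so I would prove them together.

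The engine of the argument is the additivity relation obtained by telescoping the Euler recursion \eqref{re1}: for any three dyadic times $s<u<t$,
\begin{equation*}
\Phi_{s,t}=\Phi_{s,u}+\Phi_{u,t}+\bigl[\bbf(\by_n(u))-\bbf(\by_n(s))\bigr]\bigl(\bx(t)-\bx(u)\bigr).
\end{equation*}
By Assumption \ref{ass0} and the $\alpha$-H\"older control $|\bx(t)-\bx(u)|\le C_\alpha|t-u|^\alpha$, the final defect term is at most $h|\nabla\bbf|\cdot|\by_n(u)-\by_n(s)|\cdot C_\alpha|t-u|^\alpha$. First I would restrict to a short interval $[s,t]$ and run a dyadic (sewing-type) bisection, recursively splitting the grid points of $[s,t]$ into two halves. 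The base case is exact, since a single Euler step has zero remainder, $\Phi_{t^n_k,t^n_{k+1}}=0$. At scale $2^{-m}|t-s|$ the defect contributes of order $h|\nabla\bbf|\,C_1 C_\alpha\,(2^{-m}|t-s|)^{2\alpha}$ over $2^m$ sub-intervals, where $C_1$ is a provisional first-order H\"older constant valid on $[s,t]$. Summing the geometric series $\sum_{m\ge1}2^{m(1-2\alpha)}$, which converges precisely because $2\alpha>1$, produces the factor $1/(1-2^{1-2\alpha})$ appearing in $L$ and yields $|\Phi_{s,t}|\le C\,h|\nabla\bbf|\,C_1 C_\alpha\,|t-s|^{2\alpha}$ for a universal constant $C$.

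The coupling is then closed by the threshold $\omega=(h|\bbf|C_\alpha/L)^{1/\alpha}$: on intervals with $|t-s|\le\omega$ the second-order contribution $L|t-s|^\alpha$ is dominated by the first-order one $h|\bbf|C_\alpha$, so $C_1$ may be taken uniformly (close to $h|\bbf|C_\alpha$) and the local remainder bound closes with the stated constant $L$; combining this with $|\by_n(t)-\by_n(s)|\le h|\bbf|C_\alpha|t-s|^\alpha+|\Phi_{s,t}|$ gives the local first-order constant $L+h|\bbf|C_\alpha$. To globalize, for arbitrary grid points $s,t$ I partition $[s,t]$ into at most $1+\omega^{-1}$ grid-aligned sub-intervals of length at most $\omega$. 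For the first-order bound, subadditivity of $r\mapsto r^\alpha$ together with the sub-interval count yields $G_1=(L+h|\bbf|C_\alpha)(1+\omega^{-1})$. For the second-order bound I telescope $\Phi_{s,t}$ across the partition into a sum of local remainders, which contributes the bare $L$ (since $\sum_i|v_i-v_{i-1}|^{2\alpha}\le|t-s|^{2\alpha}$ by superadditivity of $r\mapsto r^{2\alpha}$ for $2\alpha>1$), plus accumulated defect terms $\sum_i[\bbf(\by_n(v_{i-1}))-\bbf(\by_n(s))](\bx(v_i)-\bx(v_{i-1}))$ controlled by the global first-order bound $G_1$; counting the $\omega$-powers and majorizing produces the combination $(2\omega^{-\alpha}+\omega^{-1-\alpha})(L+h|\bbf|C_\alpha)$, and taking the larger of the two terms gives $G_2$.

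The main obstacle is closing the bootstrap without circularity: the first- and second-order bounds are mutually dependent, so I would organize the local step as an induction on the dyadic level (the number of Euler steps spanned by $[s,t]$), arranging matters so that the defect-term estimate invokes the first-order control only on strictly shorter sub-intervals, with the zero-remainder single steps as base case. A secondary point is bookkeeping the globalization so that the crude $\omega$-power factors land exactly on $G_1$ and $G_2$; since the lemma asserts only upper bounds, I would freely replace the sharp $\omega^{-(1-\alpha)}$-type factors by the larger $\omega^{-1}$ wherever convenient, which is what makes the stated constants valid.
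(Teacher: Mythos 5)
Your proposal is correct and takes essentially the same route as the paper: the paper's proof introduces the same Euler remainder $\bI_{jr}=\by_n(t^n_r)-\by_n(t^n_j)-\bbf(\by_n(t^n_j))(\bx(t^n_r)-\bx(t^n_j))$, uses the same Chen-type additivity identity, and runs an induction on the number of grid steps with a near-midpoint split (precisely your dyadic sewing bisection, with zero single-step remainders as base case and the geometric factor $2^{1-2\alpha}$ producing $L$), closed by the same threshold $\omega$ and the same globalization into at most $1+\omega^{-1}$ grid-aligned pieces. The only cosmetic difference is your telescoped treatment of the global second-order bound: the paper avoids that bookkeeping by simply writing $|\bI_{jr}|\le |\by_n(t^n_j)-\by_n(t^n_r)|+h|\bbf|C_{\alpha}|t^n_j-t^n_r|^{\alpha}$ and converting $|t^n_j-t^n_r|^{\alpha}\le \omega^{-\alpha}|t^n_j-t^n_r|^{2\alpha}$ on intervals longer than $\omega$, which lands directly on the stated $G_2$.
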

We also need to define a restricted $\alpha$-H\"older norm. Specifically, given the dyadic partition $D_n$ and a path $\bx(t)$, the restricted $\alpha$-H\"older norm on $D_n$ is defined as 
\$
H_{\alpha}(\bx| D_n)=\sup_{0\le i< j \le 2^n} \frac{\bigl| \bx(t^n_i)-\bx(t^n_j) \bigr |}{|t^n_i-t^n_j |^{\alpha}}.
\$   
Note that in the restricted $\alpha$-H\"older norm, we do not require that the path is well-defined on points outside of $D_n$. Intuitively, the restricted $\alpha$-H\"older norm measures the $\alpha$-H\"older continuity of the solution obtained via Euler scheme on $D_n$. We have the following lemma on the restricted $\alpha$-H\"older norm of $  \bbf(\by)-\bbf(\by_n)$ on $D_n$. 
\begin{lemma} \label{lem0.2}
For dyadic discretization $D_n$, 
\$
H_{\alpha}\bigl( \bbf(\by)-\bbf(\by_n) | D_n\bigr)& \le \bigl(  d\cdot |\nabla \bbf|+ d^2 \cdot|\nabla^2 \bbf|\cdot ( G_1^* +G_1 ) \bigr)\cdot H_{\alpha}\bigl(\by-\by_n| D_n\bigr)+\\
&\qquad \qquad   d^2 \cdot |\nabla^2 \bbf|\cdot ( G_1^* +G_1 )\cdot |\by(0)-\by_n(0)|.
\$
\end{lemma}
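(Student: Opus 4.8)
The plan is to reduce the matrix-valued claim to a scalar estimate entry by entry, and then convert the ``double difference'' into a form that the restricted H\"older norm $H_{\alpha}(\by-\by_n\,|\,D_n)$ and the initial error $|\by(0)-\by_n(0)|$ can control, using a first-order Taylor expansion. Since $\|\cdot\|$ on matrices is the maximum of the absolute values of the entries, it suffices to bound, for each fixed entry $(p,q)$ and each pair of dyadic times $t^n_i<t^n_j$, the quantity
\[
\Delta_{pq}:=\bigl[f_{pq}(\by(t^n_i))-f_{pq}(\by_n(t^n_i))\bigr]-\bigl[f_{pq}(\by(t^n_j))-f_{pq}(\by_n(t^n_j))\bigr]
\]
by a constant times $|t^n_i-t^n_j|^{\alpha}$, uniformly in $(p,q)$ and $(i,j)$.

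First I would set $\bu(s):=\by(s)-\by_n(s)$ and, using $\bbf\in C^2$ together with the fundamental theorem of calculus, write each single difference as $f_{pq}(\by(s))-f_{pq}(\by_n(s))=A_{pq}(s)\cdot\bu(s)$, where $A_{pq}(s)=\int_0^1\nabla f_{pq}\bigl(\by_n(s)+\theta\bu(s)\bigr)\,d\theta$ is an averaged gradient. Writing $s=t^n_i$ and $t=t^n_j$, this lets me split the double difference telescopically as
\[
\Delta_{pq}=A_{pq}(s)\bigl(\bu(s)-\bu(t)\bigr)+\bigl(A_{pq}(s)-A_{pq}(t)\bigr)\bu(t).
\]
The first term is immediate: $A_{pq}(s)$ has entries bounded by $|\nabla\bbf|$, so the max-norm dot product costs a factor $d$, while $\|\bu(s)-\bu(t)\|\le H_{\alpha}(\by-\by_n\,|\,D_n)\,|s-t|^{\alpha}$ directly from the definition of the restricted H\"older norm; together these give the contribution $d\,|\nabla\bbf|\,H_{\alpha}(\by-\by_n\,|\,D_n)$.

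For the second term I would use that $\nabla f_{pq}$ is Lipschitz with constant governed by $|\nabla^2\bbf|$; rewriting the argument $\by_n(\cdot)+\theta\bu(\cdot)=(1-\theta)\by_n(\cdot)+\theta\by(\cdot)$ as a convex combination, its increment between $s$ and $t$ is bounded by $(G_1^*+G_1)|s-t|^{\alpha}$ via the H\"older estimates of Lemma \ref{lem0.0} (for $\by$) and Lemma \ref{lem0.1} (for $\by_n$). This yields $\|A_{pq}(s)-A_{pq}(t)\|\le d\,|\nabla^2\bbf|\,(G_1^*+G_1)|s-t|^{\alpha}$, and a second factor $d$ enters through the dot product with $\bu(t)$.

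The one genuinely delicate point --- and the source of the initial-condition term in the statement --- is bounding $\|\bu(t)\|$ pointwise. Since both $t$ and $0$ are dyadic points, I would split $\bu(t)=\bigl(\bu(t)-\bu(0)\bigr)+\bu(0)$, bound the increment by $H_{\alpha}(\by-\by_n\,|\,D_n)\,|t|^{\alpha}\le H_{\alpha}(\by-\by_n\,|\,D_n)$ (as $t\le1$), and retain $\|\bu(0)\|=|\by(0)-\by_n(0)|$. Substituting this into the second term, combining with the first, dividing by $|s-t|^{\alpha}$, and taking the supremum over entries $(p,q)$ and dyadic pairs $(i,j)$ then produces precisely the claimed inequality. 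The main obstacle is not conceptual but bookkeeping: one must carefully track the factors $d$ and $d^2$ arising from the max-norm estimates of the matrix--vector products so that the two coefficients match the stated constants.
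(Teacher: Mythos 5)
Your proposal is correct and follows essentially the same route as the paper's proof: both express each difference $\bbf(\by(\cdot))-\bbf(\by_n(\cdot))$ via the fundamental theorem of calculus with an averaged gradient, split the double difference into the same two terms (averaged gradient applied to the increment of $\by-\by_n$, plus the increment of the averaged gradient applied to $\by(t^n_j)-\by_n(t^n_j)$), invoke Lemmas \ref{lem0.0} and \ref{lem0.1} for the $(G_1^*+G_1)$ factor, and recover the initial-condition term by splitting $\by(t^n_j)-\by_n(t^n_j)$ into its value at $0$ plus a H\"older-controlled increment. The only cosmetic differences are that you argue entrywise and traverse the interpolating segment in the opposite direction, neither of which changes the substance.
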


With the above lemmas, we are ready to prove Theorem \ref{thm0}. 
Let 
$$\cJ_i\equiv\int_{t^n_i}^{t^n_{i+1}} \bigl( \bbf(\by(s))- \bbf(\by(t^n_i)) \bigr) \ud \bx(s).$$
By definition of Euler scheme and the solution of equation \ref{ode}, we have 
\$
\by_n(t^n_{i+1})-\by(t^n_{i+1})=\by_n(t^n_{i})-\by(t^n_{i})+ \Bigl( \bbf\bigl( \by_n(t^n_{i})  \bigr)- \bbf\bigl( \by(t^n_{i})  \bigr)   \Bigr)\cdot  (\bx(t^n_{i+1})-\bx(t^n_{i}))-\cJ_i,
\$
and furthermore, for all $0\le \ell < k\le 2^n$,
\$
\bigl( \by_n(t^n_{k})-\by(t^n_{k}) \bigr) -\bigl( \by_n(t^n_{\ell})-\by(t^n_{\ell}) \bigr) =\sum_{i=\ell}^{k-1} \Bigl( \bbf\bigl( \by_n(t^n_{i})  \bigr)- \bbf\bigl( \by(t^n_{i})  \bigr)   \Bigr)\cdot  (\bx(t^n_{i+1})-  \bx(t^n_{i}))-\sum_{i=\ell}^{k-1} \cJ_i.  
\$
Using the Young-L\'oeve estimate, we have the following bounds 
\$
&\biggl| \sum_{i=\ell}^{k-1} \Bigl( \bbf\bigl( \by_n(t^n_{i})  \bigr)- \bbf\bigl( \by(t^n_{i})  \bigr)   \Bigr)\cdot  (\bx(t^n_{i+1})-  \bx(t^n_{i}))- \Bigl( \bbf\bigl( \by_n(t^n_{\ell})  \bigr)- \bbf\bigl( \by(t^n_{\ell})  \bigr)   \Bigr)\cdot  (\bx(t^n_{k})-  \bx(t^n_{\ell}))\biggr| \\
&\qquad \qquad \le h\cdot K(2\alpha)\cdot H_{\alpha}\bigl(\bbf(\by)-\bbf(\by_n) | D_n\bigr) \cdot C_{\alpha}\cdot |t^n_k-t^n_{\ell}|^{2\alpha},
\$ 
where $ K(2\alpha)=1+\sum_{n=1}^{\infty} {n^{-2\alpha}}$ and $C_{\alpha}$ is the $\alpha$-H\"older norm of $\bx$. By Lemma \ref{lem0.0}, we also have 
\$
|\cJ_i|\le G_2^*\cdot |t^n_{i+1}- t^n_{i} |^{2\alpha}.
\$
Note that \$\sum_{i=\ell}^{k-1} |t^n_{i+1}- t^n_{i} |^{2\alpha} \le  |t^n_k-t^n_{\ell}|\cdot \Delta_n^{2\alpha-1} \le  |t^n_k-t^n_{\ell}|^{\alpha}\cdot \Delta_n^{2\alpha-1}.\$ 
Then we have
\$
& \Bigl| \bigl( \by_n(t^n_{k})-\by(t^n_{k}) \bigr) -\bigl( \by_n(t^n_{\ell})-\by(t^n_{\ell}) \bigr)\Bigr| \\
& \qquad \qquad \le h\cdot K(2\alpha)\cdot H_{\alpha}\bigl(\bbf(\by)-\bbf(\by_n) | D_n\bigr) \cdot C_{\alpha}\cdot |t^n_k-t^n_{\ell}|^{2\alpha}+ \\ & \qquad \qquad \Bigl( \bbf\bigl( \by_n(t^n_{\ell})  \bigr)- \bbf\bigl( \by(t^n_{\ell})  \bigr)   \Bigr)\cdot  (\bx(t^n_{k})-  \bx(t^n_{\ell}))+G^*_2\cdot |t^n_k-t^n_{\ell}|^{\alpha}\cdot \Delta_n^{2\alpha-1}.
\$
By the definition of restricted $\alpha$-H\"older norm, 
\$
  |\bbf\bigl( \by_n(t^n_{\ell})  \bigr)- \bbf\bigl( \by(t^n_{\ell})  \bigr)|  \le  H_{\alpha}\bigl(\bbf(\by)-\bbf(\by_n) | D_n\bigr) \cdot |t^n_{\ell}|^{\alpha}+  \bigl |\bbf( \by_n(0)  \bigr)- \bbf\bigl( \by(0)  )\bigr |,
\$  
combined with Lemma \ref{lem0.2}, we have 
\$
 H_{\alpha}\bigl(\by-\by_n | D_n\bigr)&  \le h\cdot K(2\alpha)\cdot ( |t^n_k-t^n_{\ell}|^{\alpha}+|  t^n_{\ell} |^{\alpha})  \cdot C_{\alpha}\cdot H_{\alpha}\bigl(\bbf(\by)-\bbf(\by_n) | D_n\bigr)+\\
&\qquad \qquad \qquad  G^*_2\cdot \Delta_n^{2\alpha-1}+C_{\alpha}\cdot |\bbf\bigl( \by_n(0)  \bigr)- \bbf\bigl( \by(0)  \bigr)|\\
&\le h\cdot K(2\alpha)\cdot C_{\alpha}\cdot \bigl( d\cdot  |\nabla \bbf|+ d^2\cdot |\nabla^2 \bbf|\cdot ( G_1^* +G_1 ) \bigr)\cdot ( |t^n_k-t^n_{\ell}|^{\alpha}+|  t^n_{\ell} |^{\alpha})  \\ 
& \times H_{\alpha}\bigl(\by-\by_n| D_n\bigr)+G^*_2\cdot \Delta_n^{2\alpha-1}+  C_{\alpha}\cdot \bigl( d^2h\cdot K(2\alpha)\cdot |\nabla^2\bbf|\cdot( G_1^* +G_1 )+d\cdot |\nabla \bbf| \bigr)\\
&\times | \by_n(0)- \by(0) |.
\$
By the definition of $\zeta$ and  $\upsilon$ in \eqref{eq:Upsilon},
\$
H_{\alpha}\bigl(\by-\by_n | D_n\bigr) \le \zeta ( |t^n_k-t^n_{\ell}|^{\alpha}+|  t^n_{\ell} |^{\alpha})\cdot H_{\alpha}\bigl(\by-\by_n | D_n\bigr)+G^*_2\cdot \Delta_n^{2\alpha-1}+\upsilon  | \by_n(0)- \by(0) |.
\$ 
Let $T_0=0$ and $T_1= \max\{t \in D_n, t\le (4\zeta)^{-1/\alpha}\} $. Then 
\$
H_{\alpha}\bigl(\by-\by_n | D_n \cap [T_0,T_1] \bigr) \le 2G^*_2\cdot \Delta_n^{2\alpha-1}+2\upsilon \cdot| \by_n(0)- \by(0) |.
\$  
Since the Euler scheme and the exact solution have same initial value, $| \by_n(0)- \by(0) |=0,$ then we have  
\$
H_{\alpha}\bigl(\by-\by_n | D_n \cap [T_0,T_1] \bigr)  \le 2G^*_2\cdot \Delta_n^{2\alpha-1},
\$
and furthermore,
\$
| \by_n(T_1)  -  \by(T_1) |\le 2G^*_2\cdot (4\zeta)^{-1}\cdot \Delta_n^{2\alpha-1}.
\$
Now we let $T_2= \max\{t \in D_n, t\le 2 (4\zeta)^{-1/\alpha}\}$. For $T_1\le t^n_{\ell} < t^n_k \le T_2$,  we have  
\$
H_{\alpha}\bigl(\by-\by_n | D_n\bigr) \le \zeta ( |t^n_k-t^n_{\ell}|^{\alpha}+|  t^n_{\ell}-T_1 |^{\alpha})\cdot H_{\alpha}\bigl(\by-\by_n | D_n\bigr)+G^*_2\cdot \Delta_n^{2\alpha-1}+\upsilon  | \by_n(T_1)- \by(T_2) |.
\$ 
As a result,
\$
H_{\alpha}\bigl(\by-\by_n | D_n \cap [T_1,T_2] \bigr) \le 2(G_2^*+\upsilon\cdot 2G^*_2\cdot (4\zeta)^{-1})\cdot  \Delta_n^{2\alpha-1},
\$ 
and 
\$
| \by_n(T_2)  -  \by(T_2) |\le   \Bigl[ 2(G_2^*+\upsilon\cdot 2G^*_2\cdot (4\zeta)^{-1})\cdot (4\zeta)^{-1}+2G^*_2\cdot (4\zeta)^{-1} \Bigr]   \cdot  \Delta_n^{2\alpha-1}. 
\$
We need to repeat this procedure at most $k^*= \lceil (4\zeta)^{1/\alpha} \rceil$ times in order to cover the whole interval $[0,1]$ and we can obtain the a sequence of bounds  
\$
H_{\alpha}\bigl(\by-\by_n | D_n \cap [T_{k-1},T_{k}] \bigr) &\le \Gamma_{k} \cdot  \Delta_n^{2\alpha-1},\\
| \by_n(T_{k})  -  \by(T_{k})  |& \le \Upsilon_{k}  \cdot  \Delta_n^{2\alpha-1}, 
\$
where $\{ \Gamma_{k}\}$ and $\{ \Upsilon_{k}\}$ are defined via recursion \eqref{eq:Upsilon}.
Note that $\{ \Gamma_{k}\}$ and $\{ \Upsilon_{k}\}$ are increasing sequences, and for any $t^n_i \in [T_{k-1},T_k]$, we have $| \by_n(t_i^n)  -  \by(t_i^n)  | \le \Upsilon_{k}  \cdot  \Delta_n^{2\alpha-1}$.
Therefore,  
\$
\sup_{t^n_i \in D_n}| \by_n(t^n_i)  - \by(t^n_i)|& \le \Upsilon_{\lceil (4\zeta)^{1/\alpha} \rceil} \cdot  \Delta_n^{2\alpha-1}. 
\$
Finally, for any $t \in [0,1]$, there exists $i$ such that $t^n_i\le t < t^n_{i+1} $. 
Then we have 
\$
| \by_n(t)- \by(t)   | & \le | \by_n(t^n_i)- \by_n(t)   |+| \by_n(t)- \by(t)   |+| \by(t^n_i)- \by(t)| \\
&\le \Upsilon_{\lceil (4\zeta)^{1/\alpha} \rceil} \cdot  \Delta_n^{2\alpha-1}+G^*_1\cdot \Delta_n^{\alpha} \le \bigl( \Upsilon_{\lceil (4\zeta)^{1/\alpha} \rceil}+G^*_1 \bigr) \cdot  \Delta_n^{2\alpha-1}.
\$ 
\end{proof}

\section{Application to multilevel Monte Carlo} \label{sec:mlmc}
In this section, we demonstrate how our $\epsilon$-strong simulation algorithm can be easily combined with multilevel Monte Carlo (MLMC). We start with a brief introduction of the MLMC framework \citep{giles2008multilevel}. 
Our objective is to estimate $\alpha=\E[g(B^H)]$, where $g$ is a functional of the fBM path. 
The MLMC estimator takes the following form
\begin{equation}\label{eq:mlmc}
\hat\alpha_K=\sum_{k=0}^{K}\frac{1}{r_k}\sum_{i=1}^{r_k}D_k(i)
\end{equation} 
where $D_k(i)$'s are i.i.d. copies of some properly defined level differences.
For example, $D_k(i)=g(B_k^H)-g(B_{k-1}^H)$. Assuming $g(B_{-1}^H)=0$. Then
$\E[\hat\alpha_K]=g(B_K^H)$, which implies the bias the of the estimator only depends on the bias at the
highest level $K$. On the other hand, we have 
$$\V(\hat\alpha_K)=\sum_{k=0}^{K}\frac{1}{r_k}\V(D_k),$$
i.e. the variance depends on the variance at different levels. 
Thus, by using appropriate coupling to create the level differences, $D_k$'s, and smartly allocate
the computational budget, $r_k$'s, we can achieve substantial computational cost reduction
comparing to naive Monte Carlo method \footnote{By naive Monte Carlo method, we mean generating i.i.d. 
copies of $g(B_K^H)$.}.
As we shall explain next, the advantage of our $\epsilon$-strong simulation algorithm is 
that it provides an elegant way to construct $D_k$'s. It is also straightforward to calculate the 
variances of $D_k$'s. In what follows, we denote $\cC(D_k)$ as the computational cost of generating one copy of $D_k$.

We consider two cases for the functional form $g$. \\

\begin{itemize}
\item[I)] $g$ is Lipschitz continuous with respect to the supremum norm, 
and once $B^H$ is given, $g(B^H)$ can be evaluated in closed form. In this case, 
we can construct $D_k(i)$'s as i.i.d.\ copies of $g(B_{k}^H)-g(B_{k-1}^H)$.
The coupling is created by using the same fBM path truncated at different levels for
$B_{k}^H$ and $B_{k-1}^H$. 
\item[II)] $g$ maps $B^H$ to the solution to an SDE at a fixed time point. In this case,
we can set $D_k(i)$'s as i.i.d.\ copies of $Y_{k}-Y_{k-1}$.
Similarly to Case I, the coupling is created by constructing $Y_{k}$ and $Y_{k-1}$ using the same fBM path 
truncated at different levels. \\
\end{itemize}

For Case I, let $L$ denote the Lipschitz constant of $g$. Then for any fixed $\delta\in (0, H)$,
\begin{eqnarray*}
\V(D_k) &\leq& \EE[(g(B_{k}^H)-g(B_{k-1}^H))^2]\\
&\leq&2\cdot \bigl( \EE[(g(B_{k}^H)-g(B^H))^2]+\EE[(g(B_{k-1}^H)-g(B^H))^2]\bigr) \\
&\leq& \Bigl(\frac{2L\rho}{1-2^{-H+\delta}}\Bigr)^2\cdot \Delta_{k}^{2(H-\delta)},
\end{eqnarray*}
where the last inequality follows from the definition of record-breakers.
 As for the computational complexity, according to the analysis in Section \ref{cc}, we have
$\cC(D_k)=O(\Delta_{k}^{-1}\log(\Delta_{k}^{-1}))$,
where the recursive Gaussian bridge based method is used.

In this case, for a given mean square error (MSE) bound $\epsilon^2$, we can 
set $K=C_1\log(1/\epsilon)$, such that $\EE[\hat \alpha_K]-\alpha=O(\Delta_K^{H-\delta})=O(\epsilon)$.
We can also set $r_k=C_2 \Delta_k^{2(H-\delta)}\epsilon^{-2}\log(1/\epsilon)$, such that
$\V(\hat \alpha_K)=O(\epsilon^2)$.
With our choice of $K$ and $r_k$, the total computational cost of $\hat \alpha_K$ is
$$\sum_{k=0}^{K}r_k\cC(D_k)
=O\left(\epsilon^{-2}\log(1/\epsilon)\sum_{k=1}^{K}\Delta_k^{2(H-\delta)-1}\log(\Delta_k^{-1}) \right)\footnote{Here, we treat the cost of generating the last record-breaker as a constant. This is because $N$ does not depend on $\epsilon$ and $N\leq N(\epsilon)$ in most cases}.
$$
When $2(H-\delta)>1$, the cost is $O(\epsilon^{-2}\log(1/\epsilon))$; otherwise the cost is $O(\epsilon^{-1/(H-\delta)}\log(1/\epsilon)^2)$. 
Note that the total computational cost of naive Monte Carlo estimator is $O(\epsilon^{-2-1/(H-\delta)}\log(1/\epsilon))$. 

{We believe that the complexity that can be achieved using MLMC in this setting is near optimal.
This is based on the fact that mid-point displacement decomposition we employ 
achieves the optimal rate of convergence in terms of the bias/error of the truncated fBM path.}\\

For Case II, under Assumption \ref{ass0}, we have
\begin{eqnarray*}
\V(D_k) &\leq& \EE[(Y_{k} - Y_{k-1})^2]\\
&\leq&\EE[(Y_{k} - Y)^2]+E[(Y_{k-1} - Y)^2]\\
&\leq&4G\cdot \Delta_{k}^{2(2\alpha-1)}
\end{eqnarray*}
for any fixed $\alpha\in(1/2,H-\delta)$.
We also have
$\cC(D_k)=O(\Delta_{k}^{-1}\log(\Delta_{k}^{-1}))$.
Following similar lines of analysis as in Case I, we can show that
in Case II, to achieve an MSE of order $\epsilon^2$, we set
$K=C_3\log(1/\epsilon)$ such that $\EE[\hat \alpha_K]-\alpha=O(\Delta_K^{2\alpha-1})=O(\epsilon)$
and $r_k=C_4\Delta_{k}^{2(2\alpha-1)}\epsilon^{-2}\log(1/\epsilon)$.
Then, the computational cost is
$$O\left(\epsilon^{-2}\log(1/\epsilon)\sum_{k=1}^{K}\Delta_k^{2(2\alpha-1)-1}\log(\Delta_k^{-1})\right).$$
When $\alpha>3/4$, the cost is $O(\epsilon^{-2}\log(1/\epsilon))$; otherwise the cost is $O(\epsilon^{-1/(2\alpha-1)}\log(1/\epsilon)^2)$.

 When $\alpha>3/4$, the MLMC achieves the near optimal complexity. 
For the $\alpha\in(1/2,3/4)$, the computational complexity can get arbitrarily bad as $\alpha$ approaches $1/2$.
The fundamental bottleneck here is the Euler scheme. 
If we use higher order discretization scheme like the Milstein scheme,
the convergence rate of the discretization error can be improved from $\Delta_n^{2\alpha-1}$ to $\Delta_n^{3\alpha-1}$.
However, evaluating the iterated integrals of fBM (L\'evy area) is itself a very challenging task.
In this paper, we do not pursue more sophisticated discretization schemes, but we view this as an interesting future research direction.


\section{Numerical experiments} \label{sec:num}
In this section, we conduct some numerical experiments as a sanity check 
of the algorithms we developed.
We also provide some discussions about implementation issues of our algorithm.

Figure \ref{f1} displays two realizations of a fBM with $H=0.8$ using Algorithm \ref{alg:main}
with $\epsilon=0.1$.  

\begin{figure*}
\centering
\begin{tabular}{cc}
\includegraphics[height=0.3\textwidth]{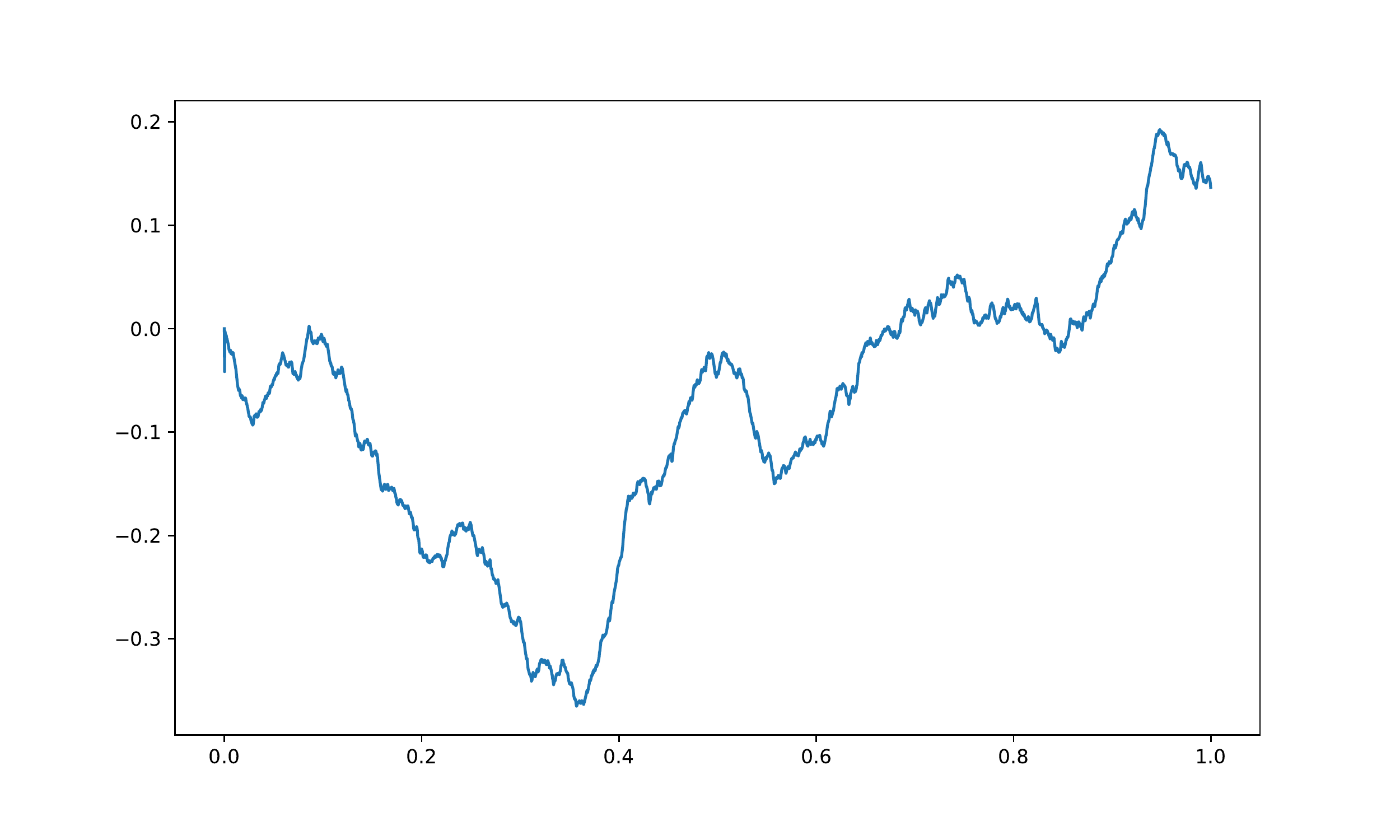}
&
\includegraphics[height=0.3\textwidth]{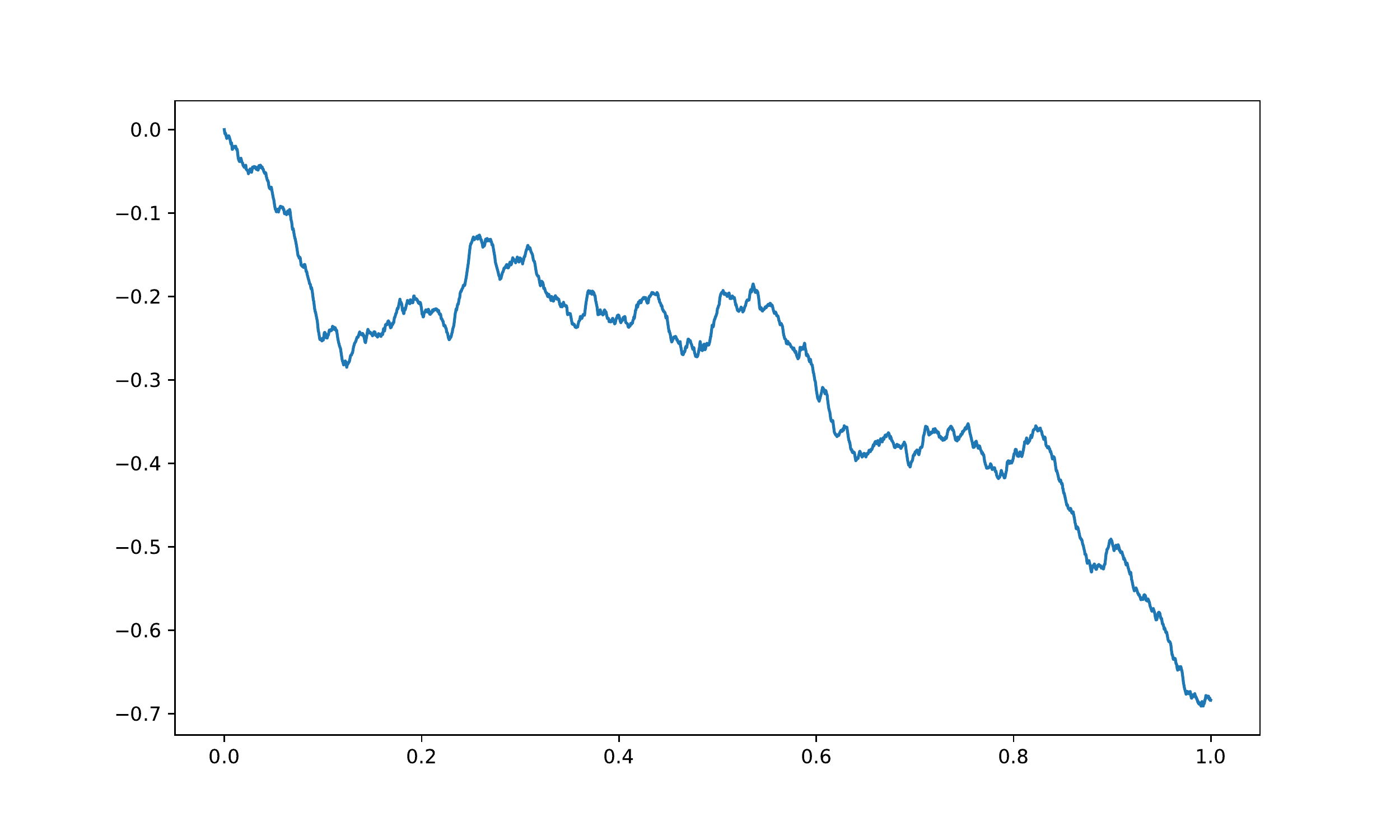}
\\
(a) & (b)\\
\end{tabular}
\caption{\small Two realizations of the $\epsilon$-strong simulation of fBM where the parameters are $H=0.8, \epsilon=0.1,\rho=5,\delta=0.1$. } \label{f1}
\label{fg}
\vspace{-12pt}
\end{figure*}

We next provide a brief discussion about the choices of record-breaker parameters $\rho$ and $\delta$ in practice. 
Recall that the record-breaking threshold takes the form $\rho\Delta_n^{H-\delta}$ for level $n$.
$\delta$ determines the asymptotic decay rate of the bound 
and $\rho$ determines the scale of the bound. Thus, we would want both to be as small as possible
when only considering truncation level, $N(\epsilon)$.
Now, when also taking into account the time of the last record-breaker, $N$,
we notice that larger values of $\rho$ and $\delta$ result in larger thresholds, under which, the records are less likely to be broken. 
These in turn lead to a smaller value of $N$. We also note that larger values of $\rho$ and $\delta$
lead to smaller stating level $N^*(\rho,\delta)$ in Algorithm \ref{SLRB}.
In the asymptotic sense, as $N$ and $N^*(\rho,\delta)$ do not depends on $\epsilon$, 
the values of $N$ and $N^*(\rho,\delta)$ do not matter.
This indicates that in theory, we should set $\rho$ and $\delta$ as small as possible.
However, in practice, we do care about the ``cost" of sampling $N$.
Thus, in actual implementations, we will tune $\rho$ and $\delta$ to 
balance $N^*(\rho,\delta)$ and $N(\epsilon)$.
Table \ref{t1} and \ref{t2} show the truncation level, the starting level, and the average level of the last record-breaker, ($N(\epsilon)$, $N^*(\rho,\delta)$, $\EE[N]$), for different choices of $\rho$ and $\delta$.
Table \ref{t1} is for fBM with $H=0.8$ and Table \ref{t2} is for fBM with $H=0.45$.
We make two observations from the tables.
First, the level of the last record-breaker, $N$, is quite sensitive to our choice of $\rho$. For reasonably large values of $\rho$, e.g. $\rho \geq 2.5$, the record breaker rarely happens beyond level $n=1$.
On the other hand, as we have discussed above, smaller values of $\rho$ lead to smaller values of truncation level $N(\epsilon)$.
Second, the starting level $N^*(\rho,\delta)$ can be unreasonably large if $\rho$ and $\delta$ are not properly chosen. In what follows, we shall 
provide more discussions about $N^*(\rho,\delta)$, including an alternative algorithm to get rid of the starting level requirement. 

\begin{table}[!htbp]
\centering
\begin{tabular}{|c|c|c|c|}
\hline
\diagbox{$\delta$}{$\rho$}&$1$&$2.5$&$5$\\
\hline
$0.1$&$(7,38,14)$&$(9,21,1)$&$(11,1,1)$\\
\hline
$0.2$&$(9,16,6)$&$(11,6,1)$&$(12,1,1)$\\
\hline
\end{tabular}
\caption{\small ($N(\epsilon)$, $N^*(\rho,\delta)$, $\EE[N]$) under different choices of $\rho $ and $\delta $ when $H=0.8, \epsilon=0.1$. }\label{t1}
\end{table}

\begin{table}[!htbp] 
\centering
\begin{tabular}{|c|c|c|c|}
\hline
\diagbox{$\delta$}{$\rho$}&$1$&$2.5$&$5$\\
\hline
$0.1$&$(16,38,15+)$&$(20,21,1)$&$(23,1,1)$\\
\hline
$0.2$&$(24,16,8)$&$(30,6,1)$&$(31,1,1)$\\
\hline
\end{tabular}
\caption{\small ($N(\epsilon)$, $N^*(\rho,\delta)$, $\EE[N]$) under different choices of $\rho $ and $\delta $ when $H=0.45, \epsilon=0.1$. 
($15+$ means even at level $15$ we still see record-breaker happening.)}\label{t2}
\end{table}


Recall from the development of Algorithm \ref{SLRB} that the starting level is required such that the normalizing constant $Z_n$ in \eqref{gm} is smaller than one, and hence, 
the weighted likelihood ratios $ \Theta^+_n(m,k)$ and $ \Theta^-_n(m,k)$ conditional on the proper record-breaking event are also bounded by one. This property (bounded by one) is desirable as we can generate Bernoulli random variable with
probability of success $\EE[ \Theta^+_n(m,k)]$ by generating $1\{U \leq \Theta^+_n(m,k)\}$, where $U$ is a Uniform random variable independent of everything else.
Note that when $Z_n>1$, with the change-of-measure technique we used in Algorithm \ref{ECM}, we are only able to generate Bernoulli random variables with success probability $\PP_n(\tau=n+m)/Z_n$. But our initial objective is to generate Bernoulli random variables with success probability $\PP_n(\tau=n+m)$. We can use a technique called the {\em Bernoulli factory} to overcome this gap. We next introduce the basic idea of Bernoulli factory and explain how it applies to our setting.
The main objective of introducing this alternative to Algorithm \ref{ECM} is to get
rid of the starting level requirement in Algorithm \ref{SLRB}. 

Suppose that $X_1,X_2, \cdots $ are i.i.d. Bernoulli random variables with unknown success probability $p$. Given a known function $f$, a Bernoulli factory takes $X_1,X_2, \cdots $ as input and outputs a Bernoulli random variable with success probability $f(p)$. 
In our case, by sampling the fBM under the change of measure up to level $(n+m)$ and check whether the record is broken at level $n+m$, we are able to ``generate" a Bernoulli random variable with success probability $\PP_n(\tau=n+m)/Z_n$. Then our $f(p)$ function is a linear function in $p$, i.e. $f(p)=Z_np$. This is also known as a linear Bernoulli factory. We refer to \cite{huber2016nearly} for a nearly optimal linear Bernoulli factory that can be directly applied to our setting. 

To sum up the discussion here, as record-breaking is a rare event \footnote{The corresponding probability decays double exponentially fast in $n$.}, the benefits of shrinking the truncation level by choosing small $\rho$ and $\delta$ is very appealing.
However, in practice, small values of $\rho$ and $\delta$ may lead to a large value of $N^*(\rho,\delta)$.
When $N^*(\rho,\delta)$ is impractically large, we may consider using the Bernoulli factory to get rid of the starting level requirement. 
Even though, in theory, our algorithm achieves near optimal complexity for fBM, 
there are still a lot of rooms for improvements in practical implementations.
Moreover, as we have discussed in Section \ref{sec:mlmc}, for fBM driven SDEs, even though our algorithm achieves near optimal complexity
under the Euler scheme, there are a lot of rooms for improvements if higher order 
discretization schemes can be efficiently implemented. We view all these as interesting future research directions.


\appendix
\section{Detailed Proofs for Technical Lemmas in Section \ref{sec:mid} } \label{AA}
\subsection{Proof of Lemma \ref{var_bound}}
According to the definitions of $a^k_j,c^k_j$ and $\bB^H_k$,  we have 
\$
\VV_{k-1}(a^k_j-c^k_j )=\VV\bigl(B^H(t^{k}_{2j+1}) | \bB^H_{k-1} \bigr)&=\VV\bigl(B^H(t^{k}_{2j+1}) | B^H(t^{k-1}_0), B^H(t^{k-1}_1),\cdots,B^H(t^{k-1}_{2^{k-1}})\bigr) \\
&\le \VV\bigl(B^H(t^{k}_{2j+1}) | B^H(t^{k-1}_j)\bigr).
\$
where the inequality follows from the fact that for random variables $X,Y$ and $Z$, we have 
$\EE[\V(X|Y,Z)] \le \EE[\V(X|Y)]$ and when the joint distribution of $(X,Y,Z)$ is multivariate Gaussian, the conditional variance $\V(X|Y,Z) $ and $\V(X|Y)$ are both constants.

The upper bound of conditional variance $\VV(B^H(t^{k}_{2j+1}) | B^H(t^{k-1}_j)) $ is based on the orthogonal bridge decomposition of Gaussian bridge (\cite{sottinen2014generalized}). Specifically, let $r(\cdot,\cdot)$ denote the covariance function of $ B^H$, then the distribution of $B^H(t)$ conditional on $B^H(t^{k-1}_j)=y^* $ is same with that of 
\$ 
B^H_{t^{k-1}_j,y^*}(t)=B^H(t)-\frac{r(t,t^{k-1}_j)}{r(t^{k-1}_j,t^{k-1}_j)}\cdot  (B^H(t^{k-1}_j)-y^*). 
\$
By simple calculation, the covariance function of $ B^H_{t^{k-1}_j,y^*}(t) $ is given by
\$
r_{t^{k-1}_j,y^*}(s,t)=r(s,t)-\frac{r(t,t^{k-1}_j)\cdot r(t^{k-1}_j,s)}{r(t^{k-1}_j,t^{k-1}_j)}.
\$ 
Recall that $ \Delta_{k}=t^{k}_{2j+1}-t^{k-1}_j=2^{-k}$, then we have 
\$
\VV\bigl(B^H(t^{k}_{2j+1}) | B^H(t^{k-1}_j)\bigr)&=r_{t^{k-1}_j,y^*}\bigl(t^{k}_{2j+1},t^{k}_{2j+1}\bigr)\\
&=|t^{k-1}_j+\Delta_{k}|^{2H} -1/4\cdot \bigl(|t^{k-1}_j+\Delta_{k}|^{2H}+|t^{k-1}_j|^{2H}-|\Delta_{k}|^{2H}\bigr)^2\cdot |t^{k-1}_j|^{-2H}.
\$ 
In the next, we show that $\VV(B^H(t^{k}_{2j+1}) | B^H(t^{k-1}_j))$ is no greater than  $ 2\cdot \Delta_{k}^{2H}   $  , 
which is equivalent to  
\$
|2j+1|^{2H}-1/4\cdot \bigl(|2j+1|^{2H}+|2j|^{2H}-1\bigr)^2\cdot |2j|^{-2H}\le 2,
\$
for arbitrary positive integers $j$. By calculation, the left-hand side of above inequality becomes
\$
\frac{1}{4}\cdot \biggl[2\cdot |2j+1|^{2H}-\Bigl(\frac{|2j+1|^{4H}}{|2j|^{2H}}+|2j|^{2H}\Bigr)\biggr]-\frac{1}{4}\cdot|2j|^{-2H}+\frac{1}{2}\cdot\Bigl(\frac{|2j+1|^{2H}}{|2j|^{2H}}+1\Bigr).
\$
Obviously, this quantity is no greater than $2$. Thus, above all, we obtain
\$
\VV_{k-1}(a^k_j-c^k_j )\le \VV\bigl(B^H(t^{k}_{2j+1}) | B^H(t^{k-1}_j)\bigr) \le 2\cdot \Delta_{k}^{2H} =2\cdot 2^{-2kH},
\$
which concludes the proof.

\subsection{Proof of Lemma \ref{exp_bound}}
Let $Y=\max_{1\le i \le n}|X_i|$. For any $t>0$, by Jensen's inequality, we have 
\$
\exp \bigl \{ t\EE[Y] \bigr \}&\le \EE\bigl[\exp\{tY\}\bigr]=\EE\bigl[\max_{1\le i \le n}\exp\{t|X_i|  \}\bigr]\le \sum_{i=1}^n\EE\bigl[\exp\{t|X_i|  \}\bigr] \\
&\le  \sum_{i=1}^n\EE\bigl[\exp\{tX_i  \}\bigr]+\EE\bigl[\exp\{-tX_i  \}\bigr] \le 2n\cdot \exp \{t^2\sigma^2/2   \}.
\$
Hence, we obtain
$\EE[Y]\le \log(2n)/t+t\sigma^2/2$ and the result follows by setting $t=\sqrt{2\log(2n)}/\sigma$.

\subsection{Proof of Lemma \ref{var_bound2}}
In the following, we denote by
\$
\bSigma_{11 }^{(k-1)}=
\left[
 \begin{matrix}
  1^{2H} & (1^{2H}+3^{2H}-2^{2H})/2 & (1^{2H}+5^{2H}-4^{2H})/2 & \cdots & \cdots \\
   (3^{2H}+1^{2H}-2^{2H})/2 & 3^{2H} & (3^{2H}+5^{2H}-2^{2H})/2 & \cdots & \cdots  \\
   (5^{2H}+1^{2H}-4^{2H})/2 & (5^{2H}+3^{2H}-2^{2H})/2 & 5^{2H} & \cdots & \cdots  \\
   \cdots & \cdots & \cdots & \cdots & \cdots    \\
   \cdots & \cdots & \cdots & \cdots & \cdots  \\
  \end{matrix}
  \right]_{2^{k-1} \times 2^{k-1}},
\$
which is a $2^{k-1}$  by $2^{k-1}$ matrix with $(i,j)$-th entry $ (|2i-1|^{2H}+ |2j-1|^{2H}-|2i-2j|^{2H})/2$. Recall the definition of $\bN_{k-1}$ and $\bM_{k-1}$, we have 
\# \label{matrix_dec}
&(\bN_{k-1}-\bM_{k-1}) \bSigma_{22}^{({k-1})}  (\bN_{k-1}-\bM_{k-1})^{\top} \notag \\ 
\quad &=\bM_{k-1} \bSigma_{22}^{({k-1})}\bM_{k-1}^{\top}-\bM_{k-1}[\bSigma_{12}^{({k-1})}]^{\top}-\bSigma_{12}^{({k-1})}\bM_{k-1}^{\top}+\bSigma_{12}^{({k-1})}\cdot [\bSigma_{22}^{({k-1})}]^{-1}[\bSigma_{12}^{({k-1})}]^{\top}  \notag \\
\quad &=\bigl(\bM_{k-1} \bSigma_{22}^{({k-1})}\bM_{k-1}^{\top}-\bM_{k-1}[\bSigma_{12}^{({k-1})}]^{\top}-\bSigma_{12}^{({k-1})}\bM_{k-1}^{\top}+\bSigma^{({k-1})}_{11}\bigr) \notag \\
 &\quad \quad  -\bigl(\bSigma^{({k-1})}_{11}  -\bSigma_{12}^{({k-1})}\cdot [\bSigma_{22}^{({k-1})}]^{-1}[\bSigma_{12}^{({k-1})}]^{\top}\bigr).
\#
Note that $(\bSigma^{({k-1})}_{11}
-\bSigma_{12}^{({k-1})}\cdot [\bSigma_{22}^{({k-1})}]^{-1}[\bSigma_{12}^{({k-1})}]^{\top})\cdot \Delta_{k}^{2H}$ is the conditional covariance matrix 
\$\mathbb{C}ov\Bigl[\bigl(B^H(t^{k}_{1}),B^H(t^{k}_{3}),\cdots,B^H(t^{k}_{2^{k}-1})\bigr) \big| \bB^H_{k-1} \Bigr]. \$  
Hence, the diagonal entries in second part in \eqref{matrix_dec} are nonnegative. In the next, we compute the diagonal entries of the first part in \eqref{matrix_dec}. We use $\xi_{i,j}$ and $\eta_{i,j}$ to denote the $(i,j)$-th entry of $\bSigma_{22}^{({k-1})}$ and $\bSigma_{12}^{({k-1})}$, respectively. After some calculation, we obtain that the $j$-th diagonal entries of $\bM_{k-1} \bSigma_{22}^{({k-1})}\bM_{k-1}^{\top}$ and $\bM_{k-1}[\bSigma_{12}^{({k-1})}]^{\top}  $  are 
\$
\bigl[\bM_{k-1} \bSigma_{22}^{({k-1})}\bM_{k-1}^{\top}\bigr]_{j,j}&=1/4\cdot (\xi_{j,j}+\xi_{j,j+1}+\xi_{j+1,j}+\xi_{j+1,j+1}), \\
\bigl[\bM_{k-1}[\bSigma_{12}^{({k-1})}]^{\top}\bigr]_{j,j}&=1/2\cdot (\eta_{j,j}+\eta_{j+1,j}).
\$ 
By plugging in expression of $\xi_{i,j}, \eta_{i,j}$ and $ \bSigma^{({k-1})}_{11}  $,  we obtain that the $j$-th diagonal entry of the first part in \eqref{matrix_dec}  is $ 1-2^{2H-2}  $, which is a constant.  As a result, the $j$-th diagonal entry of $ \bSigma^{({k})} $ is upper bounded by 
\$
[\bSigma^{(k)}]_{j,j} \le (1-2^{2H-2})\cdot \Delta_{k}^{2H}< 2\cdot 2^{-2kH}.
\$ 

\subsection{Proof of Lemma \ref{holer_bound}}
We use $f_k(t)$ to denote $B^H_k(t)-B^H_{k-1}(t)$ and then by definition, 
\$
\bigl\|B^H_k-B^H_{k-1}\bigr\|_{\alpha}=\|f_k\|_{\alpha}=\sup_{0\le s<t \le 1}  \frac{|f_k(s)-f_k(t)|}{|s-t|^{\alpha}}.
\$
Recall that $f_k(t^{k-1}_j)=0, f_k(t^{k}_{2j+1})=a^k_j-b^k_j$ and $f_k(t)$ is linear over intervals $[t^{k}_{2j},t^{k}_{2j+1} ]$ and $[t^{k}_{2j+1},t^{k}_{2j+2}]$, where $j=0,\cdots,2^{k-1}-1$. Note that $t^{k}_{2j}=t^{k-1}_j$.  Let \$\kappa= 2^{k}\cdot \max_{0\le j \le 2^{k-1}-1} |a^k_j-b^k_j| .\$ Then $\kappa$ is the maximal slope of all linear pieces of $f_k(t)$.    We make a  discussion based on the locations of $s$ and $t$.
\vskip2pt
\noindent \textbf{case 1:  $|s-t|\le 2^{-(k-1)}$. } If there exists some $j^*$ such that $s,t \in [t^{k}_{2j^*},t^{k}_{2j^*+2}] ,$  since $\kappa$ is the maximal slope, it is easy to show that $|f_k(s)-f_k(t)|\le \kappa \cdot |s-t| $. Otherwise, there exists some $j^*$ such that $  t^{k}_{2j^*-1}\le s<t^{k}_{2j^*}<t\le t^{k}_{2j^*+1}$. Then we have 
\$
|f_k(s)-f_k(t)|&=|f_k(s)-f_k(t^{k}_{2j^*})+f_k(t^{k}_{2j^*})-f_k(t)|\le |f_k(s)-f_k(t^{k}_{2j^*})|+|f_k(t^{k}_{2j^*})-f_k(t)|\\
&\le 2\kappa\cdot\bigl(  |s-t^{k}_{2j^*} |+|t- t^{k}_{2j^*}  |\bigr)=2\kappa\cdot |s-t| .
\$ 
 Hence, by definition, we obtain 
\$
 \frac{|f_k(s)-f_k(t)|}{|s-t|^{\alpha}} \le 2\kappa \cdot |s-t|^{1-\alpha} \le 2\kappa \cdot 2^{-(1-\alpha)k}=2^{\alpha(k-1)+2}\cdot \max_{0\le j \le 2^{k-1}-1} |a^k_j-b^k_j|.
\$   

 \noindent \textbf{case 2: $|s-t|> 2^{-(k-1)}$.} In this case, there exist some $i<j$ such that $s \in [t^k_{i},t^k_{i+1}]  $ and $t \in [t^k_{j},t^k_{j+1}]  $. Then we have 
\$
|f_k(s)-f_k(t)|&=|f_k(s)-f_k(t^k_{i+1})+f_k(t^k_{j})-f_k(t)|\le |f_k(s)-f_k(t^k_{i+1})|+|f_k(t^k_{j})-f_k(t)| \\
&\le 2\kappa\cdot\bigl(  |s-t^k_{i+1} |+|t- t^k_{j}  |\bigr) \le 2^{-k+2}\cdot \kappa.
\$ 
 Moreover, we have 
\$
 \frac{|f_k(s)-f_k(t)|}{|s-t|^{\alpha}} \le \frac{2^{-k+2}\cdot \kappa}{2^{-\alpha (k-1)}}=2^{\alpha (k-1)+2}\cdot \max_{0\le j \le 2^{k-1}-1} |a^k_j-b^k_j|.
\$
 Above all, we obtain $ \|B^H_k-B^H_{k-1}\|_{\alpha}\le 2^{\alpha (k-1)+2}\cdot \max_{0\le j \le 2^{k-1}-1} |a^k_j-b^k_j|,  $ which concludes the proof of Lemma \ref{holer_bound}. 

\section{Detailed Proofs for Technical Lemmas in Section \ref{sec:alg} } \label{AB}
\subsection{Proof of Lemma \ref{lrb} }
Before we prove Lemma \ref{lrb},
we first establish an upper bound on $\Xi_{n}^{(m,k)}$, i.e. the conditional moment generating function of $ \bbeta^{\top} \balpha_n(m,k)$. 
\begin{lemma} \label{cor}
Under the BCE condition, for $m\ge 1$, $1\le k \le 2^{n+m-1}$ and $\theta>0$,
\$
\Xi_{n}^{(m,k)}(\theta) \le \exp\Bigl\{1/2\cdot\bigl( \rho \ell_{{\tau_n}+m}\cdot \theta + \theta^2\cdot 2^{-2({\tau_n}+m)H} \bigr)  \Bigr\}.
\$
\end{lemma}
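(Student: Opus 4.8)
The plan is to recognise $\Xi_n^{(m,k)}$ as the moment generating function of a single Gaussian scalar and then to control its two Gaussian parameters separately. Throughout I write $\tau_n$ for the current conditioning level $n$, emphasising that Lemma \ref{cor} is applied at a record-breaking level; accordingly the dyadic points entering $\balpha_n(m,k)$ lie at level $\tau_n+m$ and the record threshold there is $\rho\ell_{\tau_n+m}$. Conditionally on $\bB^H_n$ the vector $\balpha_n(m,k)$ is jointly Gaussian, so the scalar $\bbeta^\top\balpha_n(m,k)$ is itself Gaussian with conditional mean $\mu:=\mu_n(m,k)=\bbeta^\top\bmu_n(m,k)$ and conditional variance $\sigma^2:=\bbeta^\top\bSigma_{\balpha_n(m,k)}\bbeta$. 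The standard Gaussian MGF identity then gives the exact expression
\[
\Xi_n^{(m,k)}(\theta)=\exp\Bigl\{\theta\mu+\tfrac12\theta^2\sigma^2\Bigr\},
\]
so the whole lemma reduces to the two one-sided bounds $\theta\mu\le\tfrac{\rho}{2}\ell_{\tau_n+m}\,\theta$ and $\sigma^2\le 2^{-2(\tau_n+m)H}$.

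The bound on the mean is immediate from the hypothesis. By Definition \ref{BCE} the BCE condition is exactly $|\mu_n(m,k)|\le\tfrac{\rho}{2}\ell_{\tau_n+m}$, and since $\theta>0$ we get $\theta\mu\le\theta|\mu|\le\tfrac{\rho}{2}\ell_{\tau_n+m}\,\theta$. This produces the first term in the target exponent and uses nothing beyond the stated assumption.

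The substantive step, and the place where I expect the only genuine subtlety, is the variance bound. Writing out $\bbeta=(1/2,-1,1/2)^\top$, the scalar $\bbeta^\top\balpha_n(m,k)$ is precisely a midpoint displacement at level $\tau_n+m$, namely $\tfrac12\bigl(B^H(t^{\tau_n+m}_{2k-2})+B^H(t^{\tau_n+m}_{2k})\bigr)-B^H(t^{\tau_n+m}_{2k-1})$ over the level-$(\tau_n+m-1)$ interval of length $h=2^{-(\tau_n+m-1)}$. One is tempted to quote Lemma \ref{var_bound} directly, but that lemma conditions on the \emph{fine} grid $\bB^H_{\tau_n+m-1}$, whereas here we condition only on the \emph{coarse} grid $\bB^H_{\tau_n}$, and conditioning on less information can only enlarge a Gaussian conditional variance; so Lemma \ref{var_bound} does not apply verbatim. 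The resolution is to bound the conditional variance by the unconditional one, which is legitimate because for jointly Gaussian variables the conditional variance is a deterministic Schur complement that never exceeds the marginal variance: $\sigma^2\le\mathrm{Var}\bigl(\bbeta^\top\balpha_n(m,k)\bigr)$. By stationarity of increments and $H$-self-similarity of $B^H$, the unconditional variance of a midpoint displacement over an interval of length $h$ equals $h^{2H}\bigl(2^{-2H}-\tfrac14\bigr)$; substituting $h=2^{-(\tau_n+m-1)}$ gives $\bigl(1-2^{2H-2}\bigr)\,2^{-2(\tau_n+m)H}$, and since $1-2^{2H-2}<1$ for $H\in(0,1)$ we conclude $\sigma^2<2^{-2(\tau_n+m)H}$ (consistent with the diagonal estimate in Lemma \ref{var_bound2}).

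Combining the two bounds in the exact MGF identity yields
\[
\Xi_n^{(m,k)}(\theta)\le\exp\Bigl\{\tfrac{\rho}{2}\ell_{\tau_n+m}\,\theta+\tfrac12\theta^2\,2^{-2(\tau_n+m)H}\Bigr\}=\exp\Bigl\{\tfrac12\bigl(\rho\ell_{\tau_n+m}\theta+\theta^2\,2^{-2(\tau_n+m)H}\bigr)\Bigr\},
\]
which is exactly the claimed inequality. Everything except the variance step is an elementary Gaussian computation together with a direct appeal to the BCE hypothesis; the one point requiring care is recognising that the coarse-grid conditional variance must be dominated by the marginal variance rather than by the fine-grid estimate of Lemma \ref{var_bound}.
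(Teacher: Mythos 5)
Your proposal is correct and follows essentially the same route as the paper: both reduce the claim to the exact Gaussian MGF identity, obtain the mean bound $\theta\mu_n(m,k)\le\tfrac{\rho}{2}\ell_{n+m}\theta$ directly from the BCE condition, and bound the conditional variance by the unconditional variance via the fact that Gaussian conditioning can only reduce variance. The only difference is cosmetic: where the paper bounds the unconditional midpoint-displacement variance by $\mathrm{Var}(U+V)\le 2\mathrm{Var}(U)+2\mathrm{Var}(V)$ together with stationarity of increments, you compute it exactly as $\bigl(1-2^{2H-2}\bigr)2^{-2(n+m)H}$, a slightly sharper constant that you then relax to the same bound $2^{-2(n+m)H}$.
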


\begin{proof}
We denote by
\$
\mu_{n}(m,k)=\E\bigl[ \bbeta^{\top} \balpha_n(m,k) \big| \bB^H_{n}\bigr],\ \sigma^2_{n}(m,k)=\V\bigl( \bbeta^{\top} \balpha_n(m,k) \big|\bB^H_{n}\bigr).
\$ 
The key of the proof is to establish proper upper bounds for $\mu_{n}(m,k)$ and $\sigma^2_{n}(m,k)$.  
We will first show that
\$
\sigma^2_{n}(m,k) \le 2^{-2(n+m)H}.
\$ 
Similar to the proof of Lemma \ref{var_bound}, for Gaussian random vectors, taking condition reduces the variance. Hence, we have 
\$
& \V\Bigl( \bbeta^{\top} \balpha_n(m,k) \big| \bB^H_{n}\Bigr) 
\le \V \Bigl( 1/2\cdot \bigl(B^H(t^{{n}+m}_{2k-2})+B^H(t^{{n}+m}_{2k})\bigr)-B^H(t^{{n}+m}_{2k-1}) \Bigr)\\
&\le 2\cdot \V \Bigl( 1/2\cdot  \bigl(B^H(t^{{n}+m}_{2k-2})-B^H(t^{{n}+m}_{2k-1}\bigr)    \Bigr) +2 \cdot \V \Bigl( 1/2\cdot  \bigl(B^H(t^{{n}+m}_{2k})-B^H(t^{{n}+m}_{2k-1}\bigr)  \Bigr).
\$
Since the fBM is a stationary process, we have 
\$
\V \bigl( B^H(t^{{n}+m}_{2k-1})-B^H(t^{{n}+m}_{2k-2})  \bigr)=\V \bigl( B^H(t^{{n}+m}_{2k})-B^H((t^{{n}+m}_{2k-1}  \bigr)=\V \bigl( B^H(\Delta_{{n}+m}) \bigr)=\Delta_{{n}+m}^{2H}, 
\$
which further implies that 
\$ 
 \sigma^2_{n}(m,k) \le 2^{-2({n}+m)H}. 
\$
Under the BCE condition, we have $ \mu_n(m,k)\le \rho/2\cdot \ell_{n+m}$. Then the upper bound of the moment generating function $\Xi_{n}^{(m,k)}(\theta)  $ holds.
\end{proof}

We turn to the proof of Lemma \ref{lrb} now. We prove the case when $\pi=+$ only. The case when $\pi=-$ follows analogously.
Note that if there is a up-crossing record-breaker at level ${n}+m$ position $k$, then $ \bbeta^{\top} \balpha_n(m,k)> \rho \ell_{{n}+m}  $. In this case, we have 
\$
\Theta_{n}^+(m,k)&=g_{n}(m)^{-1}\cdot 2^{{n}+m}\cdot \exp\Bigl\{ -\theta_{n}^+(m)\cdot  \bbeta^{\top} \balpha_n(m,k)+\log\bigl(\Xi_{n}^{(m,k)}(\theta^+_{n}(m))\bigr)   \Bigr \} \\
& \le g_{n}(m)^{-1}\cdot 2^{{n}+m}\cdot \exp\{ -\theta_{n}^+(m)\cdot \rho \ell_{{n}+m}\}\cdot \exp\Bigl\{1/2\cdot\bigl( \rho \theta_{n}^+(m)  \ell_{{n}+m}+ \theta_{n}^+(m)^2\cdot 2^{-2({n}+m)H} \bigr)  \Bigr\} \\
& \le g_{n}(m)^{-1}\cdot 2^{{n}+m}\cdot \exp\Bigl\{ -\rho/2\cdot \theta_{n}^+(m)\cdot 2^{-({n}+m)(H-\delta)}+1/2\cdot  \theta_{n}^+(m)^2\cdot 2^{-2({n}+m)H}   \Bigr \},
\$ 
where the second inequality follows from Lemma \ref{cor}. Hence, by our choice of \$\theta_{n}^+(m)=\rho/2\cdot 2^{({n}+m)(H+\delta)},\$ we obtain  
\$
\Theta_{n}^+(m,k)\le g_{n}(m)^{-1}\cdot 2^{{n}+m}\cdot \exp\bigl\{-\rho^2/8\cdot 2^{2({n}+m)\delta} \bigr\}=Z_n \le 1,
\$ 
for all $n>N^*(\rho,\delta)$.

\subsection{Proof of Lemma \ref{finitebce}}
We first consider the conditional expectation. Note that 
\$
\mu_n(m,k)=\E\bigl[ \bbeta^{\top} \balpha_n(m,k)\big| \bB^H_{n}\bigr]=(1/2,-1,1/2)^{\top}\bSigma_{n}^{(m,k)}\bSigma_{n}^{-1}\bB^H_{n}.
\$
Since $ \mu_n(m,k) $ is the linear combination of Gaussian random variables, itself is also Gaussian.
It is easy to see that $\EE[\mu_n(m,k)]=0$.   
For the variance, according to the decomposition of conditional variance, we have 
\$
\VV\bigl( \bbeta^{\top} \balpha_n(m,k)\bigr)=\EE\bigl[\VV( \bbeta^{\top} \balpha_n(m,k)|\bB^H_n)\bigr] +\VV\bigl( \EE[ \bbeta^{\top} \balpha_n(m,k)|\bB^H_n]\bigr).
\$
Thus we obtain
\$
\VV(\mu_n(m,k))&\le \VV\bigl( \bbeta^{\top} \balpha_n(m,k) )\bigr)+\EE\bigl[\VV( \bbeta^{\top} \balpha_n(m,k)|\bB^H_n)\bigr]\le 2\cdot 2^{-2(n+m)H},
\$
where the inequality follows from the proof of Lemma \ref{cor}. For fixed $n,m,k$, we define the event
\$
\cE_{n}(m,k)=\{  | \mu_{n}(m,k) | > \rho/2 \cdot 2^{-(n+m)(H-\delta)}\}.
\$
Then we have 
\$
\PP\bigl(\cE_{n}(m,k)\bigr)\le C \exp\{ -\rho^2/8\cdot 2^{2(n+m)\delta} \},
\$
where $C$ is some constant. Note that  
$
\cE_n\subseteq \cup_{m=1}^{\infty} \cup_{k=1}^{2^{n+m-1}} \cE_n(m,k).
$
Then we have 
\$
\PP(\cE_n)& \le \sum_{m=1}^{\infty} 2^{n+m-1} \cdot \PP\bigl(\cE_n(m,k)\bigr ) \\
&\le C\cdot \sum_{m=1}^{\infty} 2^{n+m-1} \cdot \exp\{ -\rho^2/8\cdot 2^{2(n+m)\delta} \}\le C^{\prime }\cdot \exp\{ -C''\cdot 2^{2(n+m)\delta} \},
\$
where $C'$ and $C''$ are some constants. Since 
\$
\sum_{n=1}^{\infty}  C^{\prime }\cdot \exp\{ -C''\cdot 2^{2(n+m)\delta} \}< \infty,
\$
by Borel-Cantelli Lemma, events $\{\cE_n\}_{n\ge1 }$ happen finite times almost surely. Moreover, similar to the proof of Theorem \ref{ucr}, we further have that the moment generating function of $\cE$, the last level where the BCE condition is broken, exists everywhere.

\subsection{Proof of Lemma \ref{ceb}}
Recall that we have 
\$
\mu_n(m,k)=\E\bigl[ \bbeta^{\top} \balpha_n(m,k) \big| \bB^H_{n}\bigr]=(1/2,-1,1/2)^{\top}\bSigma_{n}^{(m,k)}\bSigma_{n}^{-1}\bB^H_{n}.
\$
Here $\bSigma_{n}$ is the covariance matrix of  $\bB^H_{n}$ and  $\bSigma_{n}^{(m,k)}$ is the covariance matrix of  $\balpha_{n}(m,k)$ and $\bB^H_{n}$, which has form
\$
\bSigma_{n}^{(m,k)}=
\left[
 \begin{matrix}
  r\bigl((2k-2)/2^{{n}+m},0\bigr) &  r\bigl((2k-2)/2^{{n}+m},1/2^{n}\bigr) & \cdots & \cdots & r\bigl((2k-2)/2^{{n}+m},1\bigr) \\
   r\bigl((2k-1)/2^{{n}+m},0\bigr) & r\bigl((2k-1)/2^{{n}+m},1/2^{n}\bigr) & \cdots & \cdots & r\bigl((2k-1)/2^{{n}+m},1\bigr) \\
   r\bigl(2k/2^{{n}+m},0\bigr) & r\bigl(2k/2^{{n}+m},1/2^{n}\bigr) & \cdots & \cdots & r\bigl(2k/2^{{n}+m},1\bigr)  \\
  \end{matrix}
  \right]_{3 \times (2^{n}+1)},
\$
where $r(\cdot,\cdot)$ denotes the covariance function of fBM. 
By calculation, for $1\le j \le 2^{n}+1$, the $j$-th entry of $ (1/2,-1,1/2)^{\top}\bSigma_{n}^{(m,k)}$ is 
\$
\frac{\Delta_{m+{n}}^{2H}}{4}&\cdot \Bigl( |2k-2|^{2H}+|2k|^{2H}-2|2k-1|^{2H} \\
&+ |2k-2-j 2^{m}|^{2H}+|2k-j 2^{m}|^{2H}-2|2k-1-j 2^{m}|^{2H}  \Bigr).
\$
We consider the function $\phi(x)=(x+2)^{2H}+x^{2H}-2\cdot(x+1)^{2H}$. Then we have $\phi^{\prime}(x)=2H\cdot((x+2)^{2H-1}+x^{2H-1}-2\cdot(x+1)^{2H-1})$. When $1/2<H<1$, by Jensen's inequality, $\phi^{\prime}(x)\le 0$ for $x\ge0$. Hence $\phi(x)$ is decreasing on $[0,\infty]$. Moreover, $\phi(x)$ is convex on $[0,\infty]$. As a result, for all positive integer $i$, we have
\$
0\le (i+2)^{2H}+i^{2H}-2\cdot(i+1)^{2H}\le 2^{2H}-2 <2.
\$ 
When $0<H<1/2$, by Jensen's inequality, $\phi^{\prime}(x)\ge 0$ for $x\ge0$. Hence $\phi(x)$ is increasing on $[0,\infty]$. Moreover, $\phi(x)$ is concave on $[0,\infty]$. As a result, for all positive integer $i$, we have
\$
-2< 2^{2H}-2 \le (i+2)^{2H}+i^{2H}-2\cdot(i+1)^{2H}\le 0.
\$ 
Hence, for all $H$ and $i$, we have $ ||2i-2|^{2H}+|2i|^{2H}-2|2i-1|^{2H}|\le 2$, which implies that the absolute value of any entry of $ (1/2,-1,1/2)^{\top}\bSigma_{n}^{(m,k)}$ is bounded by $\Delta_{{n}+m}^{2H}=2^{-2({n}+m)H}$. 
Recall that $ \gamma_{n} $ denotes the maximal absolute value of the entries in vector $ \bSigma_{n}^{-1}\bB^H_{n} $. Hence,  we have
\$
|\mu_{n}(m,k)|=\bigl| (1/2,-1,1/2)^{\top}\bSigma_{n}^{(m,k)}\bSigma_{n}^{-1}\bB^H_{n} \bigr|\le \gamma_{n}\cdot (2^{n}+1)\cdot 2^{-2({n}+m)H}.
\$

\section{Detailed Proofs for Technical Lemmas in Section \ref{sec:sde} } \label{AC}
\subsection{Proof of Lemma \ref{lem0.0}}
Let $\omega=(2dhC_{\alpha}K(2\alpha)|\nabla \bbf|)^{-1/\alpha}$ and $T_k=k\omega,\ k=0,1,2,\cdots$. Then the union of $ [T_k, T_{k+1}]$ where $ {k=0,\cdots, \lceil \omega^{-1}\rceil},$ covers $[0,1]$.
According to the property of Young integral, we have that for all $s,t \in [T_k,T_{k+1}]$, 
\$
\Bigl| \int_s^t \bbf(\by(u))\ud \bx(u) \Bigr| &\le \bigl| \bbf(\by(s))\cdot (\bx(t)-\bx(s)) \bigr| + d\cdot K(2\alpha)\cdot H_{\alpha,[T_k,T_{k+1}]}(\bbf(\by))\cdot C_{\alpha}\cdot |s-t|^{\alpha} \\ 
&\le \Bigl( h\cdot |\bbf|\cdot C_{\alpha}+dh\cdot K(2\alpha)\cdot C_{\alpha}\cdot |\nabla \bbf| \cdot H_{\alpha,[T_k,T_{k+1}]}(\by)\cdot |s-t|^{\alpha} \Bigr)\cdot |s-t|^{\alpha}.
\$
Since $ \by $ is the solution of equation \eqref{ode}, it is easy to see that 
\$
H_{\alpha,[T_k,T_{k+1}]}(\by) \le h\cdot |\bbf|\cdot C_{\alpha}+dh\cdot K(2\alpha)\cdot C_{\alpha}\cdot |\nabla \bbf| \cdot H_{\alpha,[T_k,T_{k+1}]}(\by)\cdot \omega^{\alpha}, 
\$ 
which further implies that 
\$
\sup_{0\le k \le \lfloor \omega^{-1} \rfloor}H_{\alpha,[T_k,T_{k+1}]}(\by) \le 2h\cdot |\bbf|\cdot C_{\alpha}.
\$
Now we turn to bound $H_{\alpha,[0,1]}(\by).$ For any $s, t\in [0,1]$, we assume that $ T_i \le s < T_{i+1}\le \cdots  T_{j} \le t <T_{j+1}$. Then we have 
\$
|\by(s)-\by(t)|&\le |\by(s)-\by(T_{i+1})|+\sum_{\ell=i+1}^{j-1} |\by(T_{\ell})-\by(T_{\ell+1})|+|\by(T_{j})-\by(t)|\\
&\le \sup_{0\le k \le \lfloor \omega^{-1} \rfloor}H_{\alpha,[T_k,T_{k+1}]}(\by) \cdot \Bigl( |s-T_{i+1}|^{\alpha}+\sum_{\ell=i+1}^{j-1} | T_{\ell}-T_{\ell+1} |^{\alpha}+ |T_{j}-t|^{\alpha}  \Bigr).
\$
Since $\alpha<1$, using Jensen's inequality and the number of covering subintervals, we have 
\$
 |s-T_{i+1}|^{\alpha}+\sum_{\ell=i+1}^{j-1} | T_{\ell}-T_{\ell+1} |^{\alpha}+ |T_{j}-t|^{\alpha}\le  \lceil \omega^{-1}\rceil^{1-\alpha}\cdot |s-t|^{\alpha}.
\$
So we have  $H_{\alpha,[0,1]}(\by)\le 2h \cdot  \lceil \omega^{-1}\rceil^{1-\alpha}\cdot |\bbf|\cdot C_{\alpha}$, which is the first bound.

For another bound, note that according to Young-L\'oeve estimate, for any partition $\Pi$ of $[s,t]$, we have 
\$
&\Bigl| \sum_{t_i\in \Pi}\bbf(\by(t_i))\cdot(\bx(t_{i+1})-\bx(t_{i}))-\bbf(\by(s))\cdot(\bx(t)-\bx(s)) \Bigr| \le dh\cdot K(2\alpha)\cdot |\nabla \bbf| \cdot H_{\alpha,[0,1]}(\by)\cdot  C_{\alpha}|s-t|^{2\alpha}.
\$
Let the partition mesh goes to zero, we obtain the result.

\subsection{Proof of Lemma \ref{lem0.1}}
For convenience, in this proof, we use $\bx_k$ to denote $\bx(t^n_k)$ and $\by_k$ to denote $\by_n(t^n_k)$. For each $0\le j\le \ell\le 2^n$, let $ \bI_{j\ell}=\by_{\ell}-\by_{j}-\bbf(\by_j)\cdot (\bx_{j+1}-\bx_{j}). $
 We first show that for all $t^n_{r},t^n_{j} \in [0,1] $, whenever $|t^n_{r}-t^n_{j}|\le \omega  $, then  $|\bI_{jr}|\le L|t^n_{r}-t^n_{j}|^{2\alpha}$. 
By the definition of Euler scheme,  $\bI_{jr}=0$, if $r-j=0,1$. For $r-j\ge 2$, we prove this lemma via induction. Suppose that the claim holds true for all pairs $p,q$ with $q-p<r-j$. Let $ \ell \in [j,r)$ be the largest integer such that $ |t^n_{j}-t^n_{\ell}|\le 1/2\cdot|t^n_{j}-t^n_{r}|$. Then we have $ |t^n_{\ell+1}-t^n_{r}|\le 1/2\cdot|t^n_{j}-t^n_{r}|$. By the inductive hypothesis, $|\bI_{j\ell}|\le L|t^n_{\ell}-t^n_{j}|^{2\alpha}$ and hence, we have  
\$
|y^i_{\ell}-y^i_{j}| \le |I^{i}_{j\ell}|+\Bigl|\sum_{k=1}^hf_{ik}(\by_j)(x^k_{\ell}-x^k_{j})  \Bigr | \le L|t^n_{\ell}-t^n_{j}|^{2\alpha}+h\cdot |\bbf|\cdot C_{\alpha}|t^n_{\ell}-t^n_{j}|^{\alpha}.
\$ 
Furthermore, since  
$
|t^n_{\ell}-t^n_{j}|\le \omega=(h |\bbf |C_{\alpha}/L)^{1/\alpha}  ,
$  
we have $|\by_{\ell}-\by_{j}| \le 2h\cdot |\bbf|\cdot C_{\alpha}|t_{\ell}-t_{j}|^{\alpha}.$ 
Note that for $j\le \ell \le r $,
\$
I^{i}_{jr}=I^{i}_{j\ell}+I^{i}_{\ell r}+\sum_{k=1}^{h}\bigl( f_{ik}(\by_{\ell})-f_{ik}(\by_j) \bigr)\bigl( x^k_r-x^k_{\ell} \bigr).
\$   
Then we have 
\$
|I^{i}_{jr}| &\le |I^{i}_{j\ell}|+|I^{i}_{\ell r}|+ h|\nabla \bbf| \cdot |\by_{\ell}-\by_j|\cdot C_{\alpha}|t^n_{r}-t^n_{\ell}|^{\alpha}\\
&\le |I^{i}_{j\ell}|+|I^{i}_{\ell r}|+ 2h^{2}|\nabla\bbf |\cdot |\bbf|\cdot C_{\alpha}^{2}\cdot |t^n_r-t^n_j|^{2\alpha}.
\$
Similarly, we have 
\$
|I^{i}_{\ell r}|\le |I^{i}_{\ell,\ell+1}|+|I^{i}_{\ell+1, r}|+ 2h^{2}|\nabla\bbf |\cdot |\bbf|\cdot C_{\alpha}^{2}\cdot |t^n_r-t^n_j|^{2\alpha}.
\$ 
Since $I^{i}_{\ell,\ell+1}=0  $, we get 
\$
|I^{i}_{jr}|\le |I^{i}_{j\ell}|+|I^{i}_{\ell+1, r}|+(2hC_{\alpha})^{2}|\nabla\bbf |\cdot |\bbf|\cdot |t^n_r-t^n_j|^{2\alpha}.
\$   
By applying the inductive hypothesis again, we obtain
\$
|I^{i}_{jr}|&\le L\bigl(|t_j-t_{\ell}|^{2\alpha}+|t_{\ell+1}-t_{r}|^{2\alpha}\bigr) +(2hC_{\alpha})^{2}|\nabla\bbf |\cdot |\bbf| \cdot |t^n_r-t^n_j|^{2\alpha}\\
&\le \bigl(2^{1-2\alpha}L+(2hC_{\alpha})^{2}|\nabla\bbf |\cdot |\bbf|\bigr) \cdot |t^n_r-t^n_j|^{2\alpha}\\
&=L\cdot |t^n_r-t^n_j|^{2\alpha},
\$  
where $  L =(1-2^{1-2\alpha})^{-1}\cdot (2hC_{\alpha})^{2}|\nabla\bbf |\cdot |\bbf|.$ 
Hence, by induction, we finish the proof. Recall the definition of $\bI_{jr}$, we also have that if $|t^n_r-t^n_j|\le \omega$, 
\$
|\by_r-\by_j|\le  |\bI_{jr}|+h|\bbf| C_{\alpha}|t^n_r-t^n_j|^{\alpha} \le \bigl(L+h|\bbf| C_{\alpha}\bigr)\cdot |t^n_r-t^n_j|^{\alpha}.
\$

Now we come back to the proof of Lemma \ref{lem0.1}. For $t^n_{r},t^n_{j} \in [0,1] $ with $|t^n_{r}-t^n_{j}|\le \omega$, we already obtain the conclusion. Otherwise, we can decompose the interval $[t^n_{r},t^n_{j}]$ as 
\$
t^n_{r}=t^n_{k_0}<t^n_{k_1}<t^n_{k_2}<\cdots<t^n_{k_m}=t^n_{j},
\$ 
such that $|t^n_{k_{i+1}}-t^n_{k_i}|\le \omega $ or $k_{i+1}-k_{i}=1$,  $i=0,1,\cdots m-1$. In either case, it is easy to see that 
\$|\by_{k_{i+1}}-\by_{k_{i}}|\le \bigl(L+h|\bbf|C_{\alpha}\bigr)\cdot \bigl|t^n_{k_{i+1}}-t^n_{k_i}\bigr|^{\alpha},\$ 
and hence 
\$
|\by_{j}-\by_{r}|\le \bigl(L+h|\bbf|C_{\alpha}\bigr)\cdot \bigl|t^n_{j}-t^n_{r}\bigr|^{\alpha}\cdot m \le \bigl(L+h|\bbf|C_{\alpha}\bigr)\cdot (1+ \omega^{-1}) \cdot \bigl|t^n_{j}-t^n_{r}\bigr|^{\alpha}.
\$ t
Furthermore, we have 
\$
|\bI_{jr}|&\le |\by_{j}-\by_{r}|+ h|\bbf| C_{\alpha}\bigl|t^n_{j}-t^n_{r}\bigr|^{\alpha} \\
&\le   (2+\omega^{-1})\cdot \big(L+h|\bbf|C_{\alpha} \bigr)\cdot  \bigl|t^n_{j}-t^n_{r}\bigr|^{\alpha}\\
&\le   (2\omega^{-\alpha}+\omega^{-1-\alpha})\cdot \big(L+h|\bbf|C_{\alpha} \bigr)\cdot  \bigl|t^n_{j}-t^n_{r}\bigr|^{2\alpha},
\$   
where we use $|t^n_{j}-t^n_{r}|\ge \omega  $ in   the last inequality. Hence we conclude the proof of Lemma \ref{lem0.1}.

\subsection{Proof of Lemma \ref{lem0.2}}
Based on Taylor's expansion, for $t^n_i,t^n_j \in D_n$, we have
\$
&\Bigl |\bigl[ \bbf(\by(t^n_i))-\bbf(\by(t^n_j))\bigr]-\bigl[  \bbf(\by_n(t^n_i))-\bbf(\by_n(t^n_j)) \bigr] \Bigr |\\
&=\biggl| \int_{0}^1 \nabla \bbf\bigl(\by(t^n_i) +\tau\cdot [ \by_n(t^n_i)-\by (t^n_i) ]\bigr)^{\top} [ \by(t^n_i)-\by_n(t^n_i) ]      \ud \tau- \\
&\qquad \qquad \qquad \qquad  \int_{0}^1 \nabla \bbf\bigl(\by(t^n_j) +\tau\cdot [ \by_n(t^n_j)-\by(t^n_j) ]\bigr)^{\top} [ \by(t^n_j)-\by_n(t^n_j) ]      \ud \tau   \biggr|\\
&\le \biggl| \int_{0}^1 \nabla \bbf\bigl(\by(t^n_i) +\tau\cdot [ \by_n (t^n_i)-\by(t^n_i) ]\bigr)^{\top} \bigl[ (\by(t^n_i)-\by_n(t^n_i)) - ( \by(t^n_j)-\by_n(t^n_j)) \bigr ]           \ud \tau\biggr|+  \\
& \biggl|  \int_{0}^1 \nabla \bbf\bigl(\by(t^n_j) +\tau\cdot [ \by_n(t^n_j)-\by(t^n_j) ] \bigr)-\nabla \bbf\bigl(\by(t^n_i) +\tau\cdot [ \by_n(t^n_i)-\by(t^n_i) ]    \bigr)^{\top} [ \by(t^n_j)-\by_n(t^n_j) ] \ud \tau  \biggr|.
\$ 
We use $A$ and $B$ to denote the two parts in above inequality. Recall the definition of restricted $\alpha$-H\"older norm, we have 
\$
A\le d \cdot |\nabla \bbf|\cdot  H_{\alpha}\bigl(\by-\by_n| D_n\bigr)\cdot |t^n_i-t^n_j|^{\alpha}.
\$ 
For the second term, by mean value theorem, it is easy to have 
\$
B&\le d^2\cdot |\nabla^2 \bbf|\cdot \Bigl( H_{\alpha}(\by| D_n)+H_{\alpha}\bigl (\by_n| D_n\bigl ) \Bigr)\cdot |t^n_i-t^n_j|^{\alpha} \cdot \bigl|  \by(t^n_j)-\by_n(t^n_j) \bigr|\\
&\le d^2\cdot |\nabla^2 \bbf|\cdot ( G_1^* +G_1 )\cdot |t^n_i-t^n_j|^{\alpha} \cdot \bigl|  \by(t^n_j)-\by_n(t^n_j) \bigr|,
\$
where the second inequality follows from Lemma \ref{lem0.0} and \ref{lem0.1}. Note that 
\$
\bigl|  \by(t^n_j)-\by_n(t^n_j) \bigr| &\le |\by(0)-\by_n(0)|+\bigl| (\by(t^n_j)-\by_n(t^n_j)) -(\by(0)-\by_n(0) ) \bigr|\\
&\le |\by(0)-\by_n(0)|+ H_{\alpha}\bigl(\by-\by_n| D_n\bigr)\cdot  |t^n_j|^{\alpha}.
\$
Then we have 
\$
H_{\alpha}\bigl( \bbf(\by)-\bbf(\by_n) | D_n\bigr)& \le \bigl(  d\cdot |\nabla \bbf|+ d^2 \cdot|\nabla^2 \bbf|\cdot ( G_1^* +G_1 ) \bigr)\cdot H_{\alpha}\bigl(\by-\by_n| D_n\bigr)+\\
&\qquad \qquad   d^2 \cdot |\nabla^2 \bbf|\cdot ( G_1^* +G_1 )\cdot |\by(0)-\by_n(0)|,
\$ 
which concludes the proof of Lemma \ref{lem0.2}.
\bibliographystyle{plain}
\bibliography{ref}

\begin{thebibliography}{10}

\bibitem{abry1996wavelet}
P.~Abry and F.~Sellan.
\newblock The wavelet-based synthesis for fractional {B}rownian motion proposed
  by {F}. {S}ellan and {Y}. {M}eyer: Remarks and fast implementation, 1996.

\bibitem{ayache2003approximating}
A.~Ayache and M.S. Taqqu.
\newblock Approximating fractional {B}rownian motion by a random wavelet
  series: the rate optimality problem.
\newblock {\em J. Fourier Anal. Appl}, 2003.

\bibitem{bayer2016rough}
C.~Bayer, P.K. Friz, S.~Riedel, and J.Schoenmakers.
\newblock From rough path estimates to multilevel {M}onte {C}arlo.
\newblock {\em SIAM Journal on Numerical Analysis}, 54(3):1449--1483, 2016.

\bibitem{beskos2012varepsilon}
A.~Beskos, S.~Peluchetti, and G.~Roberts.
\newblock $\varepsilon$-{S}trong simulation of the {B}rownian path.
\newblock {\em Bernoulli}, 18(4):1223--1248, 2012.

\bibitem{beskos2005exact}
A.~Beskos and G.~Roberts.
\newblock Exact simulation of diffusions.
\newblock {\em The Annals of Applied Probability}, 15(4):2422--2444, 2005.

\bibitem{blanchet2013steady}
J.~Blanchet and X.~Chen.
\newblock Steady-state simulation for reflected {B}rownian motion and related
  networks.
\newblock {\em Ann. Appl. Prob.}, 25(6):3209--3250, 2015.

\bibitem{blanchet2017varepsilon}
J.~Blanchet, X.~Chen, and J.~Dong.
\newblock $\varepsilon$-{S}trong simulation for multidimensional stochastic
  differential equations via rough path analysis.
\newblock {\em The Annals of Applied Probability}, 27(1):275--336, 2017.

\bibitem{blanchet2015perfect}
J.~Blanchet and J.~Dong.
\newblock Perfect sampling for infinite server and loss systems.
\newblock {\em Advances in Applied Probability}, 47(3):761--786, 2015.

\bibitem{blanchet2011exact}
J.~Blanchet and K.~Sigman.
\newblock On exact sampling of stochastic perpetuities.
\newblock {\em Journal of Applied Probability}, 48(A):165--182, 2011.

\bibitem{blanchet2017exact}
J.~Blanchet and F.~Zhang.
\newblock Exact simulation for multivariate {I}t\'o diffusions.
\newblock {\em arXiv preprint arXiv:1706.05124}, 2017.

\bibitem{chen2013localization}
N.~Chen and Z.~Huang.
\newblock Localization and exact simulation of {B}rownian motion-driven
  stochastic differential equations.
\newblock {\em Mathematics of Operations Research}, 38(3):591--616, 2013.

\bibitem{coutin2002stochastic}
L.~Coutin and Z.~Qian.
\newblock Stochastic analysis, rough path analysis and fractional {B}rownian
  motions.
\newblock {\em Probability theory and related fields}, 122(1):108--140, 2002.

\bibitem{davie2008differential}
A.C. Davie.
\newblock Differential equations driven by rough paths: an approach via
  discrete approximation.
\newblock {\em Applied Mathematics Research eXpress}, 2008, 2008.

\bibitem{davies1987tests}
R.B. Davies and D.S. Harte.
\newblock Tests for {H}urst effect.
\newblock {\em Biometrika}, 74(1):95--101, 1987.

\bibitem{dereich2011multilevel}
S.~Dereich and F.~Heidenreich.
\newblock A multilevel {M}onte {C}arlo algorithm for {L}{\'e}vy-driven
  stochastic differential equations.
\newblock {\em Stochastic Processes and their Applications}, 121(7):1565--1587,
  2011.

\bibitem{deya2012milstein}
A.~Deya, A.~Neuenkirch, and S.~Tindel.
\newblock A {M}ilstein-type scheme without {L}{\'e}vy area terms for {SDE}s
  driven by fractional {B}rownian motion.
\newblock In {\em Annales de l'IHP Probabilit{\'e}s et statistiques},
  volume~48, pages 518--550, 2012.

\bibitem{dieker2004simulation}
A.B. Dieker.
\newblock {\em Simulation of fractional {B}rownian motion}.
\newblock PhD thesis, Masters Thesis, Department of Mathematical Sciences,
  University of Twente~{\ldots}, 2004.

\bibitem{dieker2003spectral}
A.B. Dieker and M.~Mandjes.
\newblock On spectral simulation of fractional {B}rownian motion.
\newblock {\em Probability in the Engineering and Informational Sciences},
  17(3):417--434, 2003.

\bibitem{dietrich1997fast}
C.R. Dietrich and G.N. Newsam.
\newblock Fast and exact simulation of stationary {G}aussian processes through
  circulant embedding of the covariance matrix.
\newblock {\em SIAM Journal on Scientific Computing}, 18(4):1088--1107, 1997.

\bibitem{durbin1960fitting}
J.~Durbin.
\newblock The fitting of time-series models.
\newblock {\em Revue de l'Institut International de Statistique}, pages
  233--244, 1960.

\bibitem{dzhaparidze2004series}
K.~Dzhaparidze and H.~Van Zanten.
\newblock A series expansion of fractional {B}rownian motion.
\newblock {\em Probability theory and related fields}, 130(1):39--55, 2004.

\bibitem{giles2008improved}
M.B. Giles.
\newblock Improved multilevel {M}onte {C}arlo convergence using the {M}ilstein
  scheme.
\newblock In {\em Monte Carlo and Quasi-Monte Carlo Methods 2006}, pages
  343--358. Springer, 2008.

\bibitem{giles2008multilevel}
M.B. Giles.
\newblock Multilevel {M}onte {C}arlo path simulation.
\newblock {\em Operations Research}, 56(3):607--617, 2008.

\bibitem{glynn2016exact}
P.W. Glynn.
\newblock Exact simulation vs exact estimation.
\newblock In {\em 2016 Winter Simulation Conference}, pages 193--205, 2016.

\bibitem{hofling2013anomalous}
F.~H\"ofling and T.~Franosch.
\newblock Anomalous transport in the crowded world of biological cells.
\newblock {\em Reports on Progress in Physics}, 76(4):046602, 2013.

\bibitem{hu2016rate}
Y.~Hu, Y.~Liu, and D.~Nualart.
\newblock Rate of convergence and asymptotic error distribution of {E}uler
  approximation schemes for fractional diffusions.
\newblock {\em The Annals of Applied Probability}, 26(2):1147--1207, 2016.

\bibitem{huber2016nearly}
M.~Huber.
\newblock Nearly optimal {B}ernoulli factories for linear functions.
\newblock {\em Combinatorics, Probability and Computing}, 25(4):577--591, 2016.

\bibitem{kuhn2002optimal}
T.~K{\"u}hn and W.~Linde.
\newblock Optimal series representation of fractional {B}rownian sheets.
\newblock {\em Bernoulli}, 8(5):669--696, 2002.

\bibitem{laskin2000fractional}
N.~Laskin.
\newblock Fractional market dynamics.
\newblock {\em Physica A: Statistical Mechanics and its Applications},
  287(3-4):482--492, 2000.

\bibitem{lau1995self}
W.~Lau, J.L.~Wang A.~Erramilli, and W.~Willinger.
\newblock Self-similar traffic generation: the random midpoint displacement
  algorithm and its properties.
\newblock In {\em Proceedings IEEE International Conference on Communications
  ICC'95}, volume~1, pages 466--472. IEEE, 1995.

\bibitem{lejay2010controlled}
A.~Lejay.
\newblock Controlled differential equations as {Y}oung integrals: a simple
  approach.
\newblock {\em Journal of Differential Equations}, 249(8):1777--1798, 2010.

\bibitem{levinson1946wiener}
N.~Levinson.
\newblock The {W}iener root mean square error criterion in filter design and
  prediction.
\newblock {\em Journal of Mathematics and Physics}, 25(1-4):261--278, 1946.

\bibitem{liu2016optimal}
Z.~Liu, J.~Blanchet, A.B. Dieker, and T.~Mikosch.
\newblock On optimal exact simulation of max-stable and related random fields.
\newblock {\em arXiv preprint arXiv:1609.06001}, 2016.

\bibitem{lyons1998differential}
T.J. Lyons.
\newblock Differential equations driven by rough signals.
\newblock {\em Revista Matem{\'a}tica Iberoamericana}, 14(2):215--310, 1998.

\bibitem{mandelbrot1968fractional}
B.B. Mandelbrot and J.W.~Van Ness.
\newblock Fractional {B}rownian motions, fractional noises and applications.
\newblock {\em SIAM review}, 10(4):422--437, 1968.

\bibitem{meyer1999wavelets}
Y.~Meyer, F.~Sellan, and M.S. Taqqu.
\newblock Wavelets, generalized white noise and fractional integration: the
  synthesis of fractional {B}rownian motion.
\newblock {\em Journal of Fourier Analysis and Applications}, 5(5):465--494,
  1999.

\bibitem{mishura2008stochastic}
I.U.S Mishura.
\newblock {\em Stochastic calculus for fractional {B}rownian motion and related
  processes}, volume 1929.
\newblock Springer Science \& Business Media, 2008.

\bibitem{neuenkirch2007exact}
A.~Neuenkirch and I.~Nourdin.
\newblock Exact rate of convergence of some approximation schemes associated to
  {SDE}s driven by a fractional {B}rownian motion.
\newblock {\em Journal of Theoretical Probability}, 20(4):871--899, 2007.

\bibitem{norros1995use}
I.~Norros.
\newblock On the use of fractional {B}rownian motion in the theory of
  connectionless networks.
\newblock {\em IEEE Journal on selected areas in communications},
  13(6):953--962, 1995.

\bibitem{norros1999simulation}
I.~Norros, P.~Mannersalo, and J.L. Wang.
\newblock Simulation of fractional {B}rownian motion with conditionalized
  random midpoint displacement.
\newblock {\em Advances in Performance Analysis}, 2(1):77--101, 1999.

\bibitem{nualart2006fractional}
D.~Nualart.
\newblock Fractional {B}rownian motion: stochastic calculus and applications.
\newblock In {\em International Congress of Mathematicians}, volume~3, pages
  1541--1562. Eur. Math. Soc., 2006.

\bibitem{nualart2011construction}
D.~Nualart and S.~Tindel.
\newblock A construction of the rough path above fractional {B}rownian motion
  using {V}olterra's representation.
\newblock {\em The Annals of Probability}, 39(3):1061--1096, 2011.

\bibitem{paxson1997fast}
V.~Paxson.
\newblock Fast, approximate synthesis of fractional {G}aussian noise for
  generating self-similar network traffic.
\newblock {\em ACM SIGCOMM Computer Communication Review}, 27(5):5--18, 1997.

\bibitem{pollock2013exact}
M.~Pollock, A.M. Johansen, and G.~Roberts.
\newblock On the exact and $\varepsilon$-strong simulation of (jump)
  diffusions.
\newblock {\em ArXiv Mathematics e-print}, 2013.

\bibitem{rhee2012new}
C.~Rhee and P.W. Glynn.
\newblock A new approach to unbiased estimation for {SDE}'s.
\newblock In {\em Proceedings of the Winter Simulation Conference}, page~17,
  2012.

\bibitem{sottinen2014generalized}
T.~Sottinen and A.~Yazigi.
\newblock Generalized {G}aussian bridges.
\newblock {\em Stochastic Processes and Their Applications}, 124(9):3084--3105,
  2014.

\bibitem{wood1994simulation}
A.T.A. Wood and G.~Chan.
\newblock Simulation of stationary {G}aussian processes in $[0, 1]^d$.
\newblock {\em Journal of computational and graphical statistics},
  3(4):409--432, 1994.

\end{thebibliography}

\end{document}